\newtheorem{theorem}{Theorem}[section]
\newtheorem{lemma}[theorem]{Lemma}
\newtheorem{corollary}[theorem]{Corollary}
\theoremstyle{definition}
\newtheorem{definition}[theorem]{Definition}
\theoremstyle{remark}
\newtheorem{remark}[theorem]{Remark}
\numberwithin{equation}{section}
\begin{document}

\begin{center}
{\Large \bf  $\boldsymbol{L_p}$ Minkowski problem for electrostatic $\mathfrak{p}$-capacity }
\end{center}

\vskip 20pt

\begin{center}
{{\bf Du~~Zou$^1$\quad\quad\quad\quad Ge~~Xiong$^2$}\\~~ \\1. Department of Mathematics,
Wuhan University of Science and Technology,  Wuhan, \\ 430081, PR China\\
2. School of Mathematical Sciences, Tongji University, Shanghai, 200092, PR China}
\end{center}

\vskip 10pt

\footnotetext{E-mail address: 1. zoudu@wust.edu.cn; 2. xiongge@tongji.edu.cn}
\footnotetext{Research of the authors was supported by NSFC No. 11471206 and NSFC No. 11601399.}

\begin{center}
\begin{minipage}{15cm}
{\bf Abstract}  Existence and uniqueness of the solution to the discrete $L_p$
Minkowski problem for $\mathfrak{p}$-capacity
are proved  when  $p\geq1$ and $1<\mathfrak{p}<n$. For general $L_p$
Minkowski problem for $\mathfrak{p}$-capacity,  existence and uniqueness
of the solution are given when $p\geq1$ and $1<\mathfrak{p}\le 2$.
These results are non-linear
extensions of the very recent solution to the $L_p$ Minkowski problem for $\mathfrak{p}$-capacity when $p=1$
and $1<\mathfrak{p}<n$ by  CNSXYZ,
and the classical solution to the Minkowski problem for electrostatic
capacity when $p=1$ and $\mathfrak{p}=2$ by  Jerison.

\vskip 8pt{{\bf 2010 Mathematics Subject Classification:} 52A40 }

\vskip 8pt{{\bf Keywords:}   Minkowski problem;  $\mathfrak{p}$-capacity;  convex body;  Brunn-Minkowski theory}

\vskip 8pt
\end{minipage}
\end{center}

\vskip 25pt
\section{\bf Introduction}
\vskip 10pt

The setting for this paper is Euclidean $n$-space, $\mathbb{R}^n$. A
\emph{convex body} in $\mathbb{R}^n$ is a compact convex set that has a
non-empty interior. A \emph{polytope} in $\mathbb{R}^n$ is the convex hull
of a finite set of points in $\mathbb{R}^n$ provided it has positive \emph{volume}
(i.e., $n$-dimensional volume).

The Brunn-Minkowski theory (or the theory of mixed volumes) of convex bodies, developed
by Minkowski, Aleksandrov, Fenchel, et al., centers around the study of geometric
functionals of convex bodies as well as the differentials of these functionals.
Usually, the differentials  of these functionals produce \emph{new} geometric measures.
The theory depends heavily on analytic tools such as the cosine transform
 on the unit sphere  ${\mathbb S}^{n-1}$
and Monge-Amp$\grave{e}$re type equations.

A  Minkowski problem is a characterization problem for a geometric measure generated
by convex bodies: It asks for necessary and sufficient conditions in order that
a given measure arises as the measure generated by a convex body. The solution of a
Minkowski problem, in general, amounts to solving a degenerate fully non-linear partial
differential equation. The study of Minkowski problems has a long history and strong
influence on both the Brunn-Minkowski theory and fully non-linear partial differential
equations, see \cite{BookSchneider}.

The classical  Brunn-Minkowski theory begins with the variation of volume functional.

\subsection{Volume, surface area measure and the classical  Minkowski problem}

Without doubt, the most fundamental
geometric functional in the Brunn-Minkowski theory is  volume functional.
It is to see that via the variation of volume functional,  it produces the most
important geometric measure: surface  area measure.

Specifically, if $K$ and $L$ are convex bodies in  $\mathbb{R}^n$, then there exists
a finite Borel measure  $S(K,\cdot)$  on the unit sphere ${\mathbb S}^{n-1}$  known as  the
\emph{surface  area measure} of $K$,  so that
\begin{equation}\label{AleksandrovVariationalFormula}
{\left. {\frac{{dV(K + tL)}}{{dt}}} \right|_{t = {0^ + }}} =
\int\limits_{{\mathbb{S}^{n - 1}}} {{h_L}(\xi )dS(K,\xi )} ,
\end{equation}
where $V$ is the $n-$dimensional volume (i.e., Lebesgue measure in $\mathbb{R}^n$);
the convex body $K + tL=\{x+ty: x\in K, y\in L\}$ is the \emph{Minkowski sum}
of $K$ and $tL$;
$h_L: \mathbb{S}^{n-1}\to\mathbb{R}$ is
the \emph{support function} of $L$, defined by $h_L(\xi)=\max\{\xi\cdot x:
x\in L\}$, with $\xi\cdot x$ denoting the  inner product of
$\xi$ and $x$ in $\mathbb{R}^n$. Formula (\ref{AleksandrovVariationalFormula})
, also called the Aleksandrov variational formula,
suggests that the surface area measure
can be viewed as the differential of volume functional.

The surface area measure $S(K,\cdot)$ of a convex body $K$
can be defined directly, for each Borel set $\omega \subset \mathbb{S}^{n-1}$, by
\begin{equation}\label{SurfaceAreaMeasure}
S(K,\omega)=\mathcal{H}^{n-1}({\rm g}^{-1}_K(\omega)),
\end{equation}
where $\mathcal{H}^{n-1}$ is the $(n-1)$-dimensional Hausdorff measure.
Here the Gauss map
${\rm g}_K: \partial' K\to \mathbb{S}^{n-1}$ is
defined on $\partial'K$ of  those points of $\partial K$
that have a unique outer normal  and  is hence defined $\mathcal{H}^{n-1}$-a.e. on $\partial K$.
The integral in (\ref{AleksandrovVariationalFormula}), divided by the ambient dimension $n$,
is called the \emph{first mixed volume} $V_{1}(K,L)$ of $K$ and $L$, i.e.,
\[  V_{1}(K,L) =\frac{1}{n}\int\limits_{{\mathbb S}^{n-1}}h_L(\xi)dS(K,\xi).\]
It is a generalization of the well-known volume formula
\begin{equation}\label{volumeformula}
V(K)=\frac{1}{n}\int\limits_{{\mathbb S}^{n-1}}h_K(\xi)dS(K,\xi).
\end{equation}

The classical Minkowski problem, which characterizes the surface
area measure, is one of the cornerstones of the Brunn-Minkowski
theory of convex bodies. It reads: \emph{ Given a finite Borel measure $\mu$ on  $\mathbb{S}^{n-1}$,
what are the necessary and
sufficient conditions on $\mu$  so that $\mu$  is the surface area
measure  $S(K,\cdot)$ of a convex body $K$ in $\mathbb{R}^n$?}  More than a century ago,
Minkowski himself \cite{Minkproblem(Minkowski)} solved this problem
for the case when the given measure is either  discrete or has a
continuous density. Aleksandrov \cite{Minkproblem(Aleksandrov)},
\cite{Minkproblem(Aleksandrov2)} and Fenchel-Jessen
\cite{Minkproblem(FenchelJessen)} independently solved the problem
in 1938 for arbitrary measures: If  $\mu$
is not concentrated on any great subsphere of $\mathbb{S}^{n-1}$,
then  $\mu$ is the surface area measure of a convex body if and only
if $\int_{\mathbb{S}^{n-1}}\xi d\mu(\xi)=0$.

Since for strictly convex bodies with smooth boundaries, the
reciprocal of the Gauss curvature  is the density of the surface area measure with
respect to the spherical Lebesgue measure, the Minkowski problem in
differential geometry is to characterize the Gauss curvature of
closed convex hypersurfaces. Analytically, the Minkowski
problem is equivalent to solving a degenerate Monge-Amp$\grave{e}$re equation.
Establishing the regularity of the solution to the
Minkwoski problem is difficult and has led to a long series of
highly influential works, see, e.g.,    Lewy \cite{Minkproblem(Lewy)},
Nirenberg \cite{Minkproblem(Nirenberg)}, Cheng and Yau
\cite{Minkproblem(ChengYau)}, Pogorelov
\cite{Minkproblem(Pogorelov)},  Caffarelli
\cite{Minkproblem(Caffarelli1),Minkproblem(Caffarelli)}.

\subsection{$\boldsymbol L_p$ surface area measure and $\boldsymbol L_p$ Minkowski problem for volume}

The $L_{p}$ Brunn-Minkowski theory is an extension of the classical Brunn-Minkowski theory;
see \cite{LpBM(Firey),LpBM(Ludwig1),LpBM(Ludwig2),LpBM(Lutwak1),LpBM(Lutwak2),LpBM(Lutwak3),LpBM(LYZ1),LpBM(LYZ4),LpBM(LYZ5),LpBM(LYZ6),LpBM(LYZ7),Minkproblem(Zhu2)}.
In 1962, Firey \cite{LpBM(Firey)} introduce  \emph{$L_{p}$ sums} for convex bodies.
Let $1\le p < \infty$. If $K$ and $L$ are convex bodies with the origin in their interiors,
then their \emph{$L_p $ sum} $K +_p L$ is the convex body
defined by
\[ h_{K +_p L}(\xi)^p =h_{K}(\xi)^p +h_{L}(\xi)^p, \quad  \xi\in \mathbb{S}^{n-1}.\]
See also,  \cite{LpBM(Firey), LpBM(Lutwak1),LpBM(GardnerHugWeil), LpBM(LYZ11)}.
Clearly, $K+_1 L=K+L$.

For $t>0$, the \emph{$L_p$ scalar multiplication} $t\cdot_{p}  K$ is the convex body $t^{\frac{1}{p}} K$.

The $L_p$ surface area measure, introduced by Lutwak \cite{LpBM(Lutwak1)}, is a
fundamental notion in the $L_{p}$ theory. For fixed  $p\in\mathbb{R}$, and a
convex body $K$ in $\mathbb{R}^n$ with the origin in its interior, the  \emph{$L_p$ surface area measure}
$S_p(K,\cdot)$ of $K$ is a Borel measure on
$\mathbb{S}^{n-1}$ defined, for  Borel  $\omega \subset \mathbb{S}^{n-1}$, by
\[S_{p}(K, \omega)=\int\limits_{x\in {\rm g}_K^{-1}(\omega)}(x\cdot {\rm g}_K(x))^{1-p}d\mathcal{H}^{n-1}(x).\]
The $L_p$ surface area
measure $S_p(K,\cdot)$ can also be explicitly defined, for  Borel  $\omega \subset \mathbb{S}^{n-1}$,  by
\begin{equation}\label{DefLpSurfaceAreaMeasure}
S_p(K,\omega)=\int\limits_{\omega}h_K(\xi)^{1-p}dS(K,\xi).
\end{equation}
Note that
$S_1(K,\cdot)$ is just the surface area measure $S(K,\cdot)$.
$\frac{1}{n}S_{0}(K, \cdot)$ is the cone-volume measure of convex body $K$,
which is the only ${\rm SL}(n)$ invariant measure among all the
$L_{p}$ surface area measures.  In recent years,  cone-volume
measures have been greatly investigated, e.g.,
\cite{BGMN(Barthe),Gromov,LpBM(Ludwig2),LpBM(Ludwig3), Naor,
LpBM(PaourisWerner),LpBM(Xiong),LpBM(ZouXiong1)}.
$S_{2}(K, \cdot)$ is called the quadratic surface area measure of convex body
$K$, which was studied in \cite{LpBM(Ludwig1)} and \cite{LpBM(LYZ2),
LpBM(LYZ3), LpBM(LYZ10)}.  Applications of the $L_{p}$ surface area
measure to affine isoperimetric inequalities were given in, e.g.,
\cite{LpBM(CampiGronchi),LpBM(LZ), LpBM(LYZ1), LpBM(LYZ6)}.

In \cite{LpBM(Lutwak1)},  Lutwak  established the following $L_{p}$ variational formula for volume
\begin{equation}\label{LutwakVariationalFormula}
{\left. {\frac{{dV(K{ +_p}t \cdot_{p} L)}}{{dt}}} \right|_{t = {0^ +
}}} =\frac{1}{p} \int\limits_{{\mathbb{S}^{n - 1}}} {{h_L}{{(\xi )}^p}d{S_p}(K,\xi )},
\end{equation}
which suggests that the $L_{p}$ surface area measure
can be viewed as the differential of  volume functional of    $L_{p}$ combination of convex bodies.
When $p=1$,
(\ref{LutwakVariationalFormula}) is precisely (\ref{AleksandrovVariationalFormula}).

Lutwak \cite{LpBM(Lutwak1)} initiated the following $L_{p}$ Minkowski problem.

\vskip 10pt \noindent \textbf{$\boldsymbol{L_p}$ Minkowski problem
for volume.} \emph{Suppose $\mu$ is a finite Borel measure  on  $\mathbb{S}^{n-1}$ and $p\in\mathbb{R}$.
What are the necessary and
sufficient conditions on $\mu$  so that $\mu$  is
the $L_p$ surface area measure  $S_p(K,\cdot)$ of a convex body $K$ in $\mathbb{R}^n$?}

\vskip 10pt
$L_{1}$ Minkowski problem is precisely the classical
Minkowski problem. The  $L_{0}$ Minkowski problem, which
characterizes the cone-volume measure, is called the logarithmic
Minkowski problem.  In light of its strong geometric intuition and
fundamental significance, the logarithmic Minkowski problem is
regarded as the  most important case. In 1999, Andrews  \cite{Minkproblem(Andrews1)} proved
Firey's conjecture \cite{Minkproblem(Firey)} that convex surfaces
moving by their Gauss curvature become spherical as they contract to
points. A major breakthrough was made by
B$\rm{\ddot{o}}$r$\rm{\ddot{o}}$czky and LYZ \cite{ LpBM(BLYZ2)} in
2013, who establish the sufficient and necessary conditions for the
existence of a solution to the even logarithmic Minkowski problem.
The $L_{-n}$ Minkowski
problem is the centro-affine Minkowski problem. See  Chou
and Wang \cite{Minkproblem(ChouWang)}, and Zhu
\cite{Minkproblem(Zhu1),Minkproblem(Zhu2)}.

In the  recent ground-breaking  paper  \cite{Minkproblem(HuangLYZ)},
Huang, Lutwak, Yang and Zhang  introduced the dual curvature
measures $\tilde{C}_i(K,\cdot)$, $i=0, 1, \ldots, n,$ of a convex
body $K$ and solved their associated Minkowski problems. These new
geometric measures are precisely the counterparts to the curvature
measures in the dual Brunn-Minkowski theory and open up  a new
passage to the $L_p$ surface area measures, since
$\tilde{C}_n(K,\cdot)$ is just the cone-volume measure of $K$.

By now, the $L_{p}$ Minkowski problem for volume has been
intensively investigated and achieved great developments. See, e.g.,
\cite{Minkproblem(Chen),
Minkproblem(ChouWang),Minkproblem(HaberlLYZ), Minkproblem(HugLYZ),
Minkproblem(Jiang), Minkproblem(LuWang),LpBM(Lutwak1),LpBM(Lutwak3),
LpBM(LYZ5),Minkproblem(Stancu),Minkproblem(Zhu2)}. As applications,
the solutions to $L_{p}$ Minkowski problem for volume have been
used to establish sharp affine isoperimetric inequalities, such as
the affine Moser-Trudinger and the affine Morrey-Sobolev
inequalities, the  affine $L_{p}$ Sobolev-Zhang inequality, etc.
See, e.g.,
\cite{LpBM(BLYZ1),LpBM(CianchiLYZ),LpBM(HaberlSchuster1),LpBM(HaberlSchuster2),LpBM(LYZ4),LpBM(LYZ8),LpBM(Zhang)},
for more details.

\subsection{$\mathfrak{p}$-capacitary measure and Minkowski problem for $\mathfrak{p}$-capacity}

It is worth mentioning that the Minkowski problem for electrostatic $\mathfrak{p}$-capacity is doubtless an
extremely important variant among  Minkowski problems.
Recall that for $1<\mathfrak{p}<n$, the \emph{electrostatic $\mathfrak{p}$-capacity} of a
compact set $K$  in $\mathbb{R}^n$ is defined by
\[ {\rm C}_\mathfrak{p}(K)=\inf\left\{\int\limits_{\mathbb{R}^n}|\nabla
u|^\mathfrak{p}dx: u\in C^\infty_c(\mathbb{R}^n)\; {\rm and}\;
u\ge\chi_K \right\}, \]
where $C^\infty_c(\mathbb{R}^n)$ denotes the
set of functions from $C^\infty(\mathbb{R}^n)$ with compact
supports, and $\chi_K$ is the characteristic function of $K$.
 ${\rm C}_2(K)$ is the classical electrostatic (or
Newtonian) capacity of $K$.
Let $L$ be an arbitrary convex body. Via the variation of  capacity functional ${\rm C}_2(K)$, the classical
Hadamard variational formula
\begin{equation}\label{HadamardVariationalFormula1}
{\left. {\frac{{d{{\rm{C}}_2}(K + tL)}}{{dt}}} \right|_{t
= {0^ + }}} = \int\limits_{{S^{n - 1}}} {{h_L}{{(\xi )}}d{\mu _2}(K,\xi )}
\end{equation}
and its special case, the Poincar$\rm\acute{e}$ capacity formula
\begin{equation}\label{ElectrostaticCapacityFormula}
{ {\rm C}_2(K)=\frac{1}{n-2}\int\limits_{\mathbb{S}^{n-1}}h_{K}(\xi)d\mu_2(K,\xi)}
 \end{equation}
appear. Here, the new measure $\mu_2(K,\cdot)$ is a
finite Borel measure on $\mathbb{S}^{n-1}$,
called the \emph{electrostatic capacitary
measure} of $K$.  Formula  (\ref{HadamardVariationalFormula1})  suggests that the electrostatic capacitary
measure can be viewed as the differential of  capacity functional.

In his celebrated article \cite{Minkproblem(Jerison)}, Jerison
pointed out the resemblance between the  Poincar$\rm\acute{e}$
capacity formula (\ref{ElectrostaticCapacityFormula}) and the volume formula (\ref{volumeformula}) and
also a resemblance between their  variational formulas (\ref{HadamardVariationalFormula1}) and (\ref{AleksandrovVariationalFormula}).
Thus, he initiated
to consider the Minkowski problem for  electrostatic
capacity: \emph{Given a finite Borel measure $\mu$  on
$\mathbb{S}^{n-1}$,  what are the necessary and sufficient conditions on $\mu$ so
that $\mu$ is the electrostatic capacitary measure $\mu_2(K,\cdot)$ of  a convex body $K$ in $\mathbb{R}^n$?}

Jerison \cite{Minkproblem(Jerison)} solved, in full generality, the
Minkowski problem for electrostatic capacity.  He proved the necessary and
sufficient conditions for  existence of a solution,
which are unexpected identical to the corresponding conditions in
the classical Minkowski problem.  Uniqueness was settled by
Caffarelli, Jerison and Lieb  \cite{BMineqCaffarelliJerison}. The
regularity part of the proof depends on the ideas of Caffarelli
\cite{Minkproblem(Caffarelli2)} for regularity of solutions to
Monge-Amp$\rm{\grave{e}}$re equation.

\vskip 5pt
Jerison's work inspired much subsequent research on this topic. In
the very recent article \cite{capacitaryBMtheory(CNSXYZ)}, the
authors (CNSXYZ) extended Jerison's  work
to electrostatic  $\mathfrak{p}$-capacity. Let $K,L$ be  convex bodies in
$\mathbb{R}^n$ and $1<\mathfrak{p}<n$.
CNSXYZ   established the
Hadamard variational formula for $\mathfrak p$-capacity

\begin{equation}\label{HadamardVariationalFormulaPCap}
{\left. {\frac{{d{{\rm{C}}_\mathfrak{p}}(K + tL)}}{{dt}}} \right|_{t
= {0^ + }}} = (\mathfrak{p} - 1)\int\limits_{{S^{n - 1}}}
{{h_L}{{(\xi )}}d{\mu _\mathfrak{p}}(K,\xi )}
\end{equation}
and therefore the Poincar$\rm{\acute{e}}$ $\mathfrak{p}$-capacity formula
\begin{equation}\label{PoincareFormulaPCap}
{\rm C}_{\mathfrak{p}}(K)=\frac{\mathfrak{p}-1}{n-\mathfrak{p}}\int\limits_{\mathbb{S}^{n-1}}h_{K}(\xi)d\mu_{\mathfrak{p}}(K,\xi).
\end{equation}
Here, the new measure $\mu_\mathfrak{p}(K,\cdot)$ is a
finite Borel measure on $\mathbb{S}^{n-1}$,
called the \emph{electrostatic $\mathfrak{p}$-capacitary  measure} of $K$. Formula (\ref{HadamardVariationalFormulaPCap})
suggests that  $\mu_\mathfrak{p}(K,\cdot)$ can be viewed as the differential of $\mathfrak p$-capacity functional.

Consequently, the Minkowski problem for $\mathfrak{p}$-capacity was posed \cite{capacitaryBMtheory(CNSXYZ)}: \emph{Given a finite Borel
measure $\mu$ on $\mathbb{S}^{n-1}$,  what  are the necessary and sufficient
conditions on $\mu$ so that  $\mu$  is the  $\mathfrak{p}$-capacitary  measure $\mu_\mathfrak{p}(K,\cdot)$ of a convex body $K$ in
$\mathbb{R}^n$?}    CNSXYZ proved the uniqueness of the solution  when $1<\mathfrak{p}<n$, and existence and regularity
when $1<\mathfrak{p}<2$.

\subsection{$\boldsymbol L_p$ $\mathfrak{p}$-capacitary measure and $\boldsymbol L_p$ Minkowski problem for $\mathfrak{p}$-capacity}

By reviewing the  Minkowski problems for volume and capacity respectively, we find that they have been intensively
investigated along two parallel tracks, and their similarities  are more highlighted therein.
However, compared with a series of remarkable  results on $L_{p}$ Minkowski problem for volume,
the  general $L_{p}$   Minkowski problem for  capacity is  hardly ever proposed yet.
The time is ripe to initiate the research  on  general $L_{p}$   Minkowski problem for
capacity.

In this paper, we generalize the  Minkowski problem for $\mathfrak{p}$-capacity
to general $L_{p}$  Minkowski problem for $\mathfrak{p}$-capacity. In this sense,
this is the first paper to push the Minkowski problem for $\mathfrak{p}$-capacity to
$L_{p}$ stage. Here, it is worth mentioning that to comply with
the habits, we stick to using the terminology ``$L_p$" Minkowski
problem in our paper. But to avoid the confusion, we use
``$\mathfrak{p}$-capacity", instead of ``$p$-capacity", to
distinguish the ``$p$" in ``$L_p$".

In light of the fundamental significance of  $L_p$ surface area
measures $S_p(K,\cdot)$ in $L_p$  theory for convex bodies,
we introduce the important geometric measure:  \emph{$L_p$ $\mathfrak p$-capacitary measure}.

\vskip 10pt \noindent \textbf{Definition.} Let $p\in\mathbb{R}$ and
$1<\mathfrak{p}<n$.  Suppose  $K$ is a convex body in $\mathbb{R}^n$ with
the origin in its interior. The
\emph{$L_p$ $\mathfrak p$-capacitary measure $\mu_{p,\mathfrak{p}}(K,\cdot)$ of $K$}  is a finite Borel measure
on  $\mathbb{S}^{n-1}$ defined, for Borel $\omega\subseteq {\mathbb S}^{n-1}$, by
\[\mu_{p,\mathfrak{p}}(K,\omega)=\int\limits_{\omega}h_K(\xi)^{1-p}d\mu_{\mathfrak{p}}(K,\xi). \]

Soon Later, it will see that like the $L_p$ surface area
measures $S_p(K,\cdot)$, the $L_p$ $\mathfrak p$-capacitary measure $\mu_{p,\mathfrak{p}}(K,\cdot)$ is
resulted from the variation of $\mathfrak{p}$-capacity functional of $L_p$ sum of convex bodies. Specifically,
if $K,L$ are convex bodies in $\mathbb{R}^n$ with origin in their interiors, then
\[{\left. {\frac{{d{{\rm{C}}_\mathfrak{p}}(K{ + _p}t \cdot_{p} L)}}{{dt}}}
\right|_{t = {0^ + }}} = \frac{(\mathfrak{p} -
1)}{p}\int\limits_{{\mathbb{S}^{n - 1}}} {{h_L}{{(\xi )}^p}d{\mu_{p,\mathfrak{p}}}(K,\xi )},\]
where $1\leq p < \infty$.  See Corollary \ref{LqVariationalFormulaOfCap2}  for details.

Naturally, we pose the  $L_{p}$  Minkowski problem for $\mathfrak{p}$-capacity.

\vskip 10pt \noindent \textbf{$\boldsymbol{L_{p}}$ Minkowski problem
for $\mathfrak{p}$-capacity.} \emph{Suppose $\mu$ is a finite Borel measure on
$\mathbb{S}^{n-1}$,
$1<\mathfrak{p}<n$ and $p\in\mathbb{R}$. What are the necessary and sufficient conditions
on $\mu$ so that $\mu$ is the $L_p$ $\mathfrak p$-capacitary measure $\mu_{p,\mathfrak{p}}(K,\cdot)$
of a convex body $K$ in $\mathbb{R}^n$?}

\vskip 10pt
Jerison \cite{Minkproblem(Jerison)} solved the classical case when $p=1$ and $\mathfrak{p}=2$.
CNSXYZ \cite{capacitaryBMtheory(CNSXYZ)} studied the case when $p=1$ and $1<\mathfrak{p}<n$.
For the general case when  $p\neq 1$,  the corresponding problem  is completely \emph{new}.

\subsection{Main results}

To state our main results, we need to explain something first.
When $p+\mathfrak{p}=n$, the $L_{n-\mathfrak{p}}$ Minkowski problem for
$\mathfrak{p}$-capacity is a bit troubling, since  two
convex bodies with the same $L_{n-\mathfrak{p}}$
$\mathfrak{p}$-capacitary measure are dilates each other, but not necessarily
identical.  For simplicity, we technically
normalize the $L_p$ Minkowski problem for $\mathfrak{p}$-capacity as
folows:  \emph{Under what necessary and sufficient conditions on $\mu$
does there exist a  convex body $K^*$ so that ${\rm
C}_{\mathfrak{p}}(K^*)^{-1}\mu_{p, \mathfrak{p}}(K^*,\cdot)=\mu$?}
Note that when $p+\mathfrak{p}\neq n $, two problems are essentially
equivalent, in the sense that $K={\rm C}_{\mathfrak{p}}(K^*)^{1/(p+{\mathfrak{p}}-n)}K^*$.

In this article,  we  solve the discrete $L_{p}$
Minkowski problem for $\mathfrak{p}$-capacity when $1<\mathfrak{p}<n$, and the
general $L_{p}$ Minkowski problem for $\mathfrak{p}$-capacity when $1<\mathfrak{p}\le 2$.

\begin{theorem}\label{Theorem1.1}
Suppose $1<p<\infty$ and $1<\mathfrak{p}<n$. If $\mu$ is a discrete
measure on $\mathbb{S}^{n-1}$ which is not concentrated on any
closed hemisphere, then there exists a unique polytope $P$ with the
origin in its interior, such that
\[\mu_{p,{\mathfrak p}}(P,\cdot)=c\mu,\]
where $c=1$ if $p+\mathfrak{p} \neq n $, or
${\rm C}_{\mathfrak p}(P)$ if $p+\mathfrak{p}=n$.
Furthermore,  $P$ is origin-symmetric if $\mu$ is even.
\end{theorem}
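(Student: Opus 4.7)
The plan is to construct $P$ as the minimizer of a scale-invariant variational problem whose Euler-Lagrange equation reproduces the desired identity. Write $\mu = \sum_{i=1}^{N} \alpha_i \delta_{u_i}$ with $\alpha_i > 0$ and $u_i \in \mathbb{S}^{n-1}$, and parameterize the admissible polytopes by
\[
P(h) = \bigl\{x \in \mathbb{R}^n : u_i \cdot x \le h_i,\ i = 1, \ldots, N\bigr\},\qquad h = (h_1, \ldots, h_N) \in (0,\infty)^{N}.
\]
Consider the extremal problem
\[
\inf\Bigl\{\,\textstyle\sum_{i=1}^{N} \alpha_{i} h_{i}^{p} \;:\; h \in (0,\infty)^{N},\ P(h)\text{ has the origin in its interior},\ C_{\mathfrak{p}}(P(h)) = 1\Bigr\}.
\]
Since $C_{\mathfrak{p}}$ is homogeneous of degree $n - \mathfrak{p}$ while the $h_{i}$ are of degree $1$, the quotient $\sum_{i} \alpha_{i} h_{i}^{p}\big/C_{\mathfrak{p}}(P(h))^{p/(n-\mathfrak{p})}$ is scale-invariant, so any extremizer can be rescaled to meet the normalization.

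Existence of a minimizer requires controlling a minimizing sequence both from above and from below. Upper bounds on the support values come from the isocapacitary-type lower bound of $C_{\mathfrak{p}}(P(h))$ in terms of a power of the diameter. The chief technical obstacle is the lower control: ruling out degeneration of the limit polytope to a set of dimension less than $n$, or to one whose interior fails to contain the origin. Here the hypothesis that $\mu$ is not concentrated on any closed hemisphere is indispensable, exactly as in the classical Aleksandrov-Fenchel-Jessen argument; carrying out a Blaschke-type selection with capacity estimates in place of volume bounds is the main new input beyond the $p = 1$ arguments of Jerison and CNSXYZ.

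Given a minimizer $P^{*}$, the Lagrange multiplier conditions, together with the variational formula
\[
\left.\frac{d\, C_{\mathfrak{p}}(P^{*} +_{p} t \cdot_{p} L)}{dt}\right|_{t = 0^{+}} = \frac{\mathfrak{p} - 1}{p}\int_{\mathbb{S}^{n-1}} h_{L}^{p}\, d\mu_{p,\mathfrak{p}}(P^{*},\cdot)
\]
and the Poincar\'e identity $\int h_{P^{*}}^{p}\, d\mu_{p,\mathfrak{p}}(P^{*},\cdot) = \tfrac{n-\mathfrak{p}}{\mathfrak{p}-1}\, C_{\mathfrak{p}}(P^{*})$, force $\mu = \gamma\, \mu_{p,\mathfrak{p}}(P^{*},\cdot)$ for an explicit positive constant $\gamma$. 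When $p + \mathfrak{p} \ne n$, a unique dilate $P$ of $P^{*}$ absorbs $\gamma$ and yields $\mu_{p,\mathfrak{p}}(P,\cdot) = \mu$; when $p + \mathfrak{p} = n$, the measure $\mu_{p,\mathfrak{p}}$ is dilation-invariant, and the appropriately chosen dilate satisfies $\mu_{p,\mathfrak{p}}(P,\cdot) = C_{\mathfrak{p}}(P)\,\mu$. Uniqueness for $p > 1$ follows from a strict convexity argument: the objective $h \mapsto \sum_{i} \alpha_{i} h_{i}^{p}$ is strictly convex, and the constraint set $\{h : C_{\mathfrak{p}}(P(h))^{1/(n-\mathfrak{p})} \ge 1\}$ is convex by the $\mathfrak{p}$-capacitary Brunn-Minkowski inequality of CNSXYZ. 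Symmetry when $\mu$ is even is then immediate: the reflection $-P$ solves the same problem, so uniqueness forces $P = -P$.
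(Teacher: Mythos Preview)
Your overall variational strategy is the same as the paper's (you minimize the objective under a capacity constraint, the paper maximizes capacity under an objective constraint; these are equivalent by homogeneity), and the Lagrange-multiplier and symmetry steps are fine. The substantive gap is precisely where you wave your hands: showing that the optimal polytope contains the origin in its \emph{interior}. The capacity normalization $C_{\mathfrak p}(P(h))=1$ does prevent collapse to a lower-dimensional set, and the non-concentration hypothesis is what makes $P(h)$ bounded, but neither keeps the origin off $\partial P(h)$; ``Blaschke-type selection with capacity estimates'' does not touch this issue. The paper supplies a concrete deformation (its Lemma~4.4, adapting Hug--LYZ) that uses $p>1$ in an essential way: if some support values $h_{P(y)}(\xi_i)$ vanish, one pushes those coordinates up by $t$ while decreasing the positive coordinates so that the $p$-th powers compensate. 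Because $\frac{d}{dt}(h_i^p-ct^p)^{1/p}\big|_{t=0^+}=0$ when $p>1$, the constraint is preserved to first order, while the capacity derivative equals a positive multiple of $\sum_{h_i=0}\mu_{\mathfrak p}(P(y),\{\xi_i\})$, which is strictly positive since $\mu_{\mathfrak p}(P(y),\cdot)\ge c^{-\mathfrak p}S_{P(y)}$. This contradicts optimality and forces $o\in\operatorname{int}P$. Your proposal contains no substitute for this step.

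A secondary point: your strict-convexity argument shows only that the \emph{variational problem} has a unique minimizer; to conclude uniqueness in the Minkowski problem you still need that every polytope $P$ with $\mu_{p,\mathfrak p}(P,\cdot)=c\mu$ arises as such a minimizer. The paper bypasses this by proving an $L_p$ Minkowski inequality for $\mathfrak p$-capacity (its Theorem~3.9) and deducing directly that $C_{\mathfrak p}(K)^{-1}\mu_{p,\mathfrak p}(K,\cdot)=C_{\mathfrak p}(L)^{-1}\mu_{p,\mathfrak p}(L,\cdot)$ forces $K=L$.
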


\begin{theorem}\label{Theorem1.2}
Suppose $1<p<\infty$ and $1<\mathfrak{p}\le 2$. If $\mu$ is a finite
Borel measure on $\mathbb{S}^{n-1}$ which is not concentrated on any
closed hemisphere, then there exists a unique convex body $K$
containing the origin, such that
\[d\mu_{\mathfrak{p}}(K,\cdot)= c h_K^{p-1}d\mu,\]
where $c=1$  if $p+\mathfrak{p} \neq n$, or ${\rm C}_{\mathfrak p}(K)$ if $p+\mathfrak{p}=n$.
Furthermore, $K$  contains the origin in its interior if  $p\ge n$. Therefore,
$\mu_{p,\mathfrak{p}}(K,\cdot)=\mu$.
\end{theorem}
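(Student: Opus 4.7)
The plan is to reduce the general finite Borel measure case to the discrete case already handled in Theorem \ref{Theorem1.1} by a weak approximation combined with a Hausdorff-convergence argument, and then to obtain uniqueness from an $L_p$ Brunn--Minkowski type inequality for $\mathfrak{p}$-capacity available in the range $1<\mathfrak{p}\leq 2$.

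First, I would choose a sequence of discrete measures $\mu_k$ on $\mathbb{S}^{n-1}$, each not concentrated on any closed hemisphere, with $\mu_k\to\mu$ weakly (standard, e.g.\ partition $\mathbb{S}^{n-1}$ into finer Borel pieces and lump the mass of $\mu$ on representative points). By Theorem \ref{Theorem1.1}, for each $k$ there exists a polytope $P_k$ with the origin in its interior such that $\mu_{p,\mathfrak{p}}(P_k,\cdot)=c_k\mu_k$, where $c_k=1$ if $p+\mathfrak{p}\ne n$ and $c_k={\rm C}_{\mathfrak{p}}(P_k)$ if $p+\mathfrak{p}=n$. In the latter case I would normalize at the outset, say by fixing ${\rm C}_{\mathfrak{p}}(P_k)=1$, to remove the scale ambiguity.

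The key estimate is a uniform upper bound on the diameters of the $P_k$. Rewriting the defining identity as $h_{P_k}^{1-p}\,d\mu_{\mathfrak{p}}(P_k,\cdot)=c_k\,d\mu_k$ and pairing with $h_{P_k}$, the Poincaré formula \eqref{PoincareFormulaPCap} relates $(\mathfrak{p}-1)/(n-\mathfrak{p})\cdot\int h_{P_k}^{p}d\mu_k$ to ${\rm C}_{\mathfrak{p}}(P_k)$. Combining this with the capacity lower bound for convex bodies in terms of their diameter, and using that $\mu_k$ eventually charges every open hemisphere uniformly (because $\mu$ does), a standard contradiction argument yields $h_{P_k}\leq R$ uniformly. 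Blaschke's selection theorem then extracts a subsequence $P_{k_j}\to K$ in the Hausdorff metric for some compact convex set $K\ni 0$, and the non-degeneracy of $\mu$ together with ${\rm C}_{\mathfrak{p}}>0$ rules out the possibility that $K$ is lower-dimensional. To pass to the limit in the equation I use uniform convergence $h_{P_{k_j}}\to h_K$ on $\mathbb{S}^{n-1}$ together with the weak continuity of $P\mapsto\mu_{\mathfrak{p}}(P,\cdot)$ (established in CNSXYZ), obtaining $d\mu_{\mathfrak{p}}(K,\cdot)=c\,h_K^{p-1}d\mu$. When $p\geq n$, a separate argument rules out the origin lying on $\partial K$: otherwise $h_K$ vanishes on a spherical cap and the resulting constraint on $\mu_{\mathfrak{p}}(K,\cdot)$ collides with the non-concentration hypothesis on $\mu$, exactly as in the $p=1$ arguments of CNSXYZ.

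For uniqueness, I would exploit the $\mathfrak{p}$-capacitary Brunn--Minkowski inequality, valid in the range $1<\mathfrak{p}\leq 2$, together with the $L_p$ variational formula stated in Corollary \ref{LqVariationalFormulaOfCap2}: combining the two in the standard way (as in the proof of the $L_p$ Minkowski inequality for volume) yields an $L_p$ Minkowski inequality for $\mathfrak{p}$-capacity whose equality case forces two solutions to coincide, up to the obvious dilation when $p+\mathfrak{p}=n$ (which is eliminated by the normalization). The main obstacle I expect is the uniform diameter estimate for $\{P_k\}$: because the density $h_{P_k}^{p-1}$ can degenerate as the origin approaches $\partial P_k$, one must carefully use \eqref{PoincareFormulaPCap} together with the non-concentration of $\mu$ to prevent either blow-up or collapse of $P_k$. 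The restriction $1<\mathfrak{p}\leq 2$ is sharp for this argument, as it is precisely the range in which the $\mathfrak{p}$-capacitary Brunn--Minkowski inequality and its equality characterization are presently available.
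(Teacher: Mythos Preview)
Your overall architecture---approximate $\mu$ by discrete measures, invoke Theorem~\ref{Theorem1.1}, extract a Hausdorff-convergent subsequence, and pass to the limit using weak continuity of $\mu_{\mathfrak p}(\cdot,\cdot)$---matches the paper's. But there is a genuine gap, and it stems from a misdiagnosis of where the hypothesis $1<\mathfrak p\le 2$ enters.

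You assert twice that the restriction $1<\mathfrak p\le 2$ is ``precisely the range in which the $\mathfrak p$-capacitary Brunn--Minkowski inequality and its equality characterization are presently available.'' That is false: Colesanti--Salani proved the $\mathfrak p$-capacitary Brunn--Minkowski inequality, with equality characterization, for the full range $1<\mathfrak p<n$, and the paper's $L_p$ Minkowski inequality (Theorem~\ref{p-capMinkowskiineqthm}) and the resulting uniqueness (Lemma~\ref{keylem5}, Lemma~\ref{lemma6.3}) hold for all $1<\mathfrak p<n$. So uniqueness is not where the bottleneck lies.

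The restriction $1<\mathfrak p\le 2$ is actually needed in the step you wave through: ``${\rm C}_{\mathfrak p}>0$ rules out the possibility that $K$ is lower-dimensional.'' The paper handles this in two pieces. For $\dim \bar K\le n-2$, one normalizes so that ${\rm C}_{\mathfrak p}(\bar P_j)=1$; then $\mathfrak p\le 2$ gives $n-2\le n-\mathfrak p$, hence $\mathcal H^{n-\mathfrak p}(\bar K)<\infty$, and Evans--Gariepy forces ${\rm C}_{\mathfrak p}(\bar K)=0$, a contradiction (Lemma~\ref{lemma7.3}). This is the \emph{only} place $\mathfrak p\le 2$ is used. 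The case $\dim\bar K=n-1$ is \emph{not} excluded by capacity positivity (an $(n-1)$-dimensional set can have positive $\mathfrak p$-capacity when $\mathfrak p>1$) and requires a separate argument (Lemma~\ref{lemma7.2}): one passes the inequality $h_{\bar P_j}^{p-1}d\mu_j\ge c^{-\mathfrak p}F_{p,j}(\bar P_j)\,dS_{\bar P_j}$ to the limit and exploits that $S_{\bar K}$ would then be supported on $\{\pm\xi_0\}$ while $h_{\bar K}(\pm\xi_0)=0$. Your sketch does not supply either of these arguments, and without them the limit body could a priori degenerate.

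Finally, your $p\ge n$ claim (``$h_K$ vanishes on a cap and this collides with non-concentration of $\mu$'') is not the mechanism. The paper instead bounds $|\mu_{j_l}|\ge c_2\int h_{P_{j_l}}^{1-p}\,dS_{P_{j_l}}$ and, following Hug--LYZ, shows that if $o\in\partial K$ the right side dominates a constant times $\int_0^r t^{\,n-p-1}\,dt$, forcing $p<n$.
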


\begin{theorem}\label{Theorem1.3}
Suppose $1<p<\infty$ and $1<\mathfrak{p}\le 2$. If $\mu$  is a
finite even Borel measure on $\mathbb{S}^{n-1}$ which is not
concentrated on any great subsphere,  there exists a unique
origin-symmetric convex body $K$ such that $\mu_{p,
\mathfrak{p}}(K,\cdot)=c\mu$, where $c=1$ if
$p+\mathfrak{p} \neq n$, or ${\rm C}_{\mathfrak p}(K)$ if
$p+\mathfrak{p}=n$.
\end{theorem}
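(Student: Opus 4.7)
The plan is to obtain Theorem \ref{Theorem1.3} as a corollary of Theorem \ref{Theorem1.2} by showing that when the prescribed measure $\mu$ is even, the solution body furnished by Theorem \ref{Theorem1.2} is automatically origin-symmetric and hence contains $0$ in its interior. The payoff is that the ``origin on the boundary'' scenario Theorem \ref{Theorem1.2} only rules out under $p\ge n$ is ruled out here for all $1<p<\infty$ by symmetry alone, upgrading the conclusion from $d\mu_\mathfrak{p}(K,\cdot)=c\,h_K^{p-1}d\mu$ to $\mu_{p,\mathfrak{p}}(K,\cdot)=c\mu$.

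First I would apply Theorem \ref{Theorem1.2} to the even measure $\mu$, yielding a convex body $K$ containing $0$ with $d\mu_\mathfrak{p}(K,\cdot)=c\,h_K^{p-1}d\mu$. Next I would check the antipodal equivariance $\mu_\mathfrak{p}(-K,\omega)=\mu_\mathfrak{p}(K,-\omega)$; this follows by applying the Hadamard variational formula (\ref{HadamardVariationalFormulaPCap}) to the identity $\mathrm{C}_\mathfrak{p}(-K+tL)=\mathrm{C}_\mathfrak{p}(K+t(-L))$ combined with $h_{-L}(\xi)=h_L(-\xi)$, and uses only the reflection invariance of $\mathrm{C}_\mathfrak{p}$. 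Combining this identity with the evenness of $\mu$, the relation $h_{-K}(\xi)=h_K(-\xi)$, and $\mathrm{C}_\mathfrak{p}(-K)=\mathrm{C}_\mathfrak{p}(K)$ shows that $-K$ solves the very same equation $d\mu_\mathfrak{p}(-K,\cdot)=c\,h_{-K}^{p-1}d\mu$ with the same normalization constant. The uniqueness half of Theorem \ref{Theorem1.2} then forces $K=-K$, so $K$ is origin-symmetric.

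An origin-symmetric convex body with $0$ on its boundary would be sandwiched between two parallel supporting hyperplanes through $0$ and thus lie in a hyperplane, contradicting the non-empty interior. Hence $0$ lies in the interior of $K$, so $h_K>0$ on $\mathbb{S}^{n-1}$, and the defining relation
\[
d\mu_{p,\mathfrak{p}}(K,\cdot)=h_K^{1-p}\,d\mu_\mathfrak{p}(K,\cdot)=c\,d\mu
\]
holds, giving $\mu_{p,\mathfrak{p}}(K,\cdot)=c\mu$. For uniqueness, any origin-symmetric $K'$ with $\mu_{p,\mathfrak{p}}(K',\cdot)=c'\mu$ is a convex body containing $0$ satisfying $d\mu_\mathfrak{p}(K',\cdot)=c'\,h_{K'}^{p-1}d\mu$, to which the uniqueness clause in Theorem \ref{Theorem1.2} applies.

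The main obstacle I expect is simply verifying the antipodal equivariance $\mu_\mathfrak{p}(-K,\omega)=\mu_\mathfrak{p}(K,-\omega)$, which I would derive cleanly from the variational characterization (\ref{HadamardVariationalFormulaPCap}) rather than from the integral representation of $\mu_\mathfrak{p}$ developed in \cite{capacitaryBMtheory(CNSXYZ)}. The rest is bookkeeping: an invocation of Theorem \ref{Theorem1.2}'s uniqueness and the elementary geometric fact that an origin-symmetric convex body cannot place $0$ on its boundary.
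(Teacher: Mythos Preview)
Your proposal is correct and follows essentially the same route as the paper (Corollary \ref{generalCapMProblem3}): apply Theorem \ref{Theorem1.2}, observe that $-K$ solves the same equation by the evenness of $\mu$ together with $\mu_\mathfrak{p}(-K,\omega)=\mu_\mathfrak{p}(K,-\omega)$ and $h_{-K}(\xi)=h_K(-\xi)$, and invoke uniqueness to get $K=-K$. You are in fact more careful than the paper, which omits both the verification of the antipodal equivariance of $\mu_\mathfrak{p}$ and the elementary argument that an origin-symmetric convex body must contain $o$ in its interior.
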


Continuity of the solution to the $L_p$ Minkowski problem for $\mathfrak{p}$-capacity is shown.

\begin{theorem}\label{Theorem1.4}
Suppose that $1<p<\infty$ and $1<\mathfrak{p}\le 2$.  Let $\mu$ and
$\mu_j$, $j\in\mathbb{N}$,  be finite Borel measures on
$\mathbb{S}^{n-1}$ which are not concentrated on any closed
hemisphere, and  $K$ and $K_j$ be convex bodies containing the
origin such that ${\rm C}_{\mathfrak{p}}(K)^{-1}\mu_{p,
\mathfrak{p}}(K,\cdot)=\mu$ and ${\rm
C}_{\mathfrak{p}}(K_j)^{-1}\mu_{p,\mathfrak{p}}(K_j,\cdot)= \mu_j$,
respectively. If $\mu_j\to\mu$ weakly,  then $ K_j\to K$, as
$j\to\infty$.
\end{theorem}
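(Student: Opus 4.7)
The approach follows the standard template for Minkowski-type problems: derive a priori bounds, extract a Hausdorff-convergent subsequence by Blaschke selection, pass to the limit in the defining equation, and invoke uniqueness. Throughout I would work with the equivalent reformulation $d\mu_{\mathfrak{p}}(K_j,\cdot)={\rm C}_{\mathfrak{p}}(K_j)\,h_{K_j}^{p-1}\,d\mu_j$ of the given identity. Integrating $h_{K_j}$ against this measure and substituting into the Poincar\'e formula \eqref{PoincareFormulaPCap} produces the uniform normalization
\[
\int_{\mathbb{S}^{n-1}} h_{K_j}(\xi)^p\,d\mu_j(\xi) = \frac{n-\mathfrak{p}}{\mathfrak{p}-1},
\]
which will play the role that a fixed volume constraint plays in the volume setting.

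\textbf{Uniform diameter bound.} Set $R_j=\max_{\xi} h_{K_j}(\xi)$, attained at some $u_j\in\mathbb{S}^{n-1}$, and after passing to a subsequence assume $u_j\to u_0$. Since $R_j u_j\in K_j$, the support function satisfies $h_{K_j}(\xi)\geq R_j(\xi\cdot u_j)_+$, so the normalization forces
\[
R_j^p\int_{\mathbb{S}^{n-1}}(\xi\cdot u_j)_+^p\,d\mu_j(\xi)\leq\frac{n-\mathfrak{p}}{\mathfrak{p}-1}.
\]
Weak convergence $\mu_j\to\mu$ together with the hypothesis that $\mu$ is not concentrated on the closed hemisphere $\{\xi\cdot u_0\leq 0\}$ yields $\liminf_j\int(\xi\cdot u_j)_+^p\,d\mu_j\geq\int(\xi\cdot u_0)_+^p\,d\mu>0$, and hence $\sup_j R_j<\infty$. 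Blaschke selection then produces a subsequence $K_{j_k}\to K_0$ in the Hausdorff metric, where $K_0$ is a compact convex set containing the origin.

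\textbf{Non-degeneracy of the limit.} The main obstacle is to rule out that $K_0$ lies in a proper hyperplane. Suppose for contradiction that $K_0\subset\{x\cdot e=0\}$ for some $e\in\mathbb{S}^{n-1}$, so that $h_{K_{j_k}}(\pm e)\to 0$. Testing the rearranged equation ${\rm C}_{\mathfrak{p}}(K_{j_k})\,d\mu_{j_k}=h_{K_{j_k}}^{1-p}\,d\mu_{\mathfrak{p}}(K_{j_k},\cdot)$ against a bump function supported near $\pm e$, I would play the blow-up of $h_{K_{j_k}}^{1-p}$ (recall $p>1$) and the concentration of the electrostatic $\mathfrak{p}$-capacitary measure of a thinning convex body near its polar caps against the uniform boundedness of $\int f\,d\mu_{j_k}$ provided by weak convergence. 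Making this quantitative is the delicate part and relies on estimates for $\mu_{\mathfrak{p}}(K_j,\cdot)$ and ${\rm C}_{\mathfrak{p}}(K_j)$ for nearly-degenerate bodies drawn from the capacitary Brunn-Minkowski machinery of CNSXYZ.

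\textbf{Passage to the limit and conclusion.} Once $K_0$ is a convex body, the continuity of support functions, of $\mathfrak{p}$-capacity, and of the electrostatic $\mathfrak{p}$-capacitary measure under Hausdorff convergence (standard consequences of the capacitary Hadamard formula \eqref{HadamardVariationalFormulaPCap}) permit passing to the limit in $d\mu_{\mathfrak{p}}(K_{j_k},\cdot)={\rm C}_{\mathfrak{p}}(K_{j_k})h_{K_{j_k}}^{p-1}\,d\mu_{j_k}$ to obtain $d\mu_{\mathfrak{p}}(K_0,\cdot)={\rm C}_{\mathfrak{p}}(K_0)h_{K_0}^{p-1}\,d\mu$, i.e., ${\rm C}_{\mathfrak{p}}(K_0)^{-1}\mu_{p,\mathfrak{p}}(K_0,\cdot)=\mu$. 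The uniqueness assertion in Theorem~\ref{Theorem1.2} forces $K_0=K$. Since every Hausdorff-convergent subsequence of $\{K_j\}$ has $K$ as its limit, the whole sequence converges to $K$.
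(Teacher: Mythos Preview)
Your overall architecture---uniform bound, Blaschke selection, non-degeneracy, passage to the limit, uniqueness---matches the paper exactly, and your diameter bound is essentially the paper's Lemma~\ref{lemma7.1}. The divergence comes at the non-degeneracy step, which you yourself flag as ``the delicate part'' and leave as a sketch.

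The paper does not try to rule out degeneracy of $K_0$ directly. Instead it passes to the rescaled bodies $\bar K_j = {\rm C}_{\mathfrak p}(K_j)^{-1/(n-\mathfrak p)}K_j$, which satisfy ${\rm C}_{\mathfrak p}(\bar K_j)=1$, and shows (Lemma~\ref{lemma7.1}) that these too are bounded. This normalization buys a clean dichotomy. For a subsequential limit $\bar K_0$, the case $\dim \bar K_0\le n-2$ is dispatched in one line (Lemma~\ref{lemma7.3}): since $1<\mathfrak p\le 2$ one has $\mathcal H^{n-\mathfrak p}(\bar K_0)<\infty$, hence ${\rm C}_{\mathfrak p}(\bar K_0)=0$, contradicting continuity of capacity and the normalization. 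The case $\dim\bar K_0=n-1$ (Lemma~\ref{lemma7.2}) is closer in spirit to what you sketch: one uses the CNSXYZ lower bound $\mu_{\mathfrak p}(\bar K_j,\cdot)\ge c^{-\mathfrak p}S_{\bar K_j}$, passes to the limit against a test function, and derives the contradictory pair of statements that the measure $h_{\bar K_0}^{p-1}d\mu$ has positive point mass at the normal $\pm\xi_0$ of the limiting hyperplane (from the surface-area side) yet vanishes there (since $h_{\bar K_0}(\pm\xi_0)=0$).

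Your single ``bump function plus blow-up'' heuristic, applied directly to $K_j$, runs into trouble precisely in the low-dimensional case: if $\dim K_0\le n-2$ then both ${\rm C}_{\mathfrak p}(K_{j_k})\to 0$ and the total mass of $\mu_{\mathfrak p}(K_{j_k},\cdot)$ near the ``polar caps'' need not stay positive, so there is no obvious contradiction between the two sides of your tested identity. The capacity normalization is what decouples these two failure modes. Once non-degeneracy of $\bar K_0$ is in hand, the paper recovers $K_0$ from $\bar K_0$ via Lemma~\ref{lemma7.4} and finishes as you do.
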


 CNSXYZ \cite{capacitaryBMtheory(CNSXYZ)} demonstrated
the weak convergence of $L_p$ $\mathfrak{p}$-capacitary measure: If
$K_j\to K$,  then $\mu_{p,\mathfrak{p}}(K_j,\cdot)\to
\mu_{p,\mathfrak{p}}(K,\cdot)$ weakly. Theorem \ref{Theorem1.4}
shows that the converse still holds: If
$\mu_{p,\mathfrak{p}}(K_j,\cdot)\to \mu_{p,\mathfrak{p}}(K,\cdot)$
weakly, then $K_j\to K$.

\vskip 10pt We emphasize that, for  $p>1,$ the $L_{p} $ Minkowski
problem for $\mathfrak{p}$-capacity is considerably more complicated
than the $p=1$ case, requiring both new ideas and techniques. Our
approach to this problem is rooted in the ideas and techniques from
convex geometry. So its proof exhibits  rich geometric flavour.
Specifically, to  prove Theorem 1.1, techniques developed by  Hug
and LYZ \cite{Minkproblem(HugLYZ)},  Klain \cite{Minkproblem(Klain)}
and  Lutwak \cite{LpBM(Lutwak1),LpBM(LYZ5)} are comprehensively
employed. In addition, techniques developed by the authors
themselves in \cite{LpBM(ZouXiong1), LpBM(ZouXiong2),
LpBM(ZouXiong3), LpBM(ZouXiong4)} are also crucial to  the proof. To
prove  Theorem 1.2,  we turn the Minkowski problem  into  solving
two dual optimization problems. This strategy was fist used by LYZ
\cite{LpBM(LYZ7)} to establish the $L_p$ John ellipsoids, and then
developed by Zou and Xiong  to establish the Orlicz-John ellipsoids
\cite{LpBM(ZouXiong1)} and the Orlicz-Legendre ellipsoids \cite{
LpBM(ZouXiong3)}.

\vskip 10pt

This paper is organized as follows. In Section 2, we introduce
necessary notations and collect some basic facts concerning the
convex bodies, the $\mathfrak{p}$-capacity and the Aleksandrov
bodies. Some basic facts of the $L_p$ $\mathfrak{p}$-capacitary
measures $\mu_{p,\mathfrak{p}}(K,\cdot)$ are provided in Section 3.
For example, to study the uniqueness of the $L_p$ Minkowski problem
for $\mathfrak{p}$-capacity, we prove the $L_p$ Minkowski inequality
for $\mathfrak{p}$-capacity and then characterize the uniqueness of
$\mu_{p,\mathfrak{p}}(K,\cdot)$. The proof of Theorem
\ref{Theorem1.1} is provided in Section 4. Along with the arguments
in Section 4, we show that Theorem 1.1 still holds when
$p=1$ in Section 5, which solves CNSXYZ's \cite[p.
1517]{capacitaryBMtheory(CNSXYZ)} open problem for discrete
measures. Theorem \ref{Theorem1.2} and Theorem \ref{Theorem1.4} are
provided in Section 8 and Section 9, respectively. To prove these
theorems, we reformulate the $L_p$ Minkowski problem for $\mathfrak
p$-capacity into a pair of dual optimization problems. See Section 6
for details. More preliminaries about these optimization problems
are provided in Section 7.

\vskip 25pt
\section{\bf Preliminaries}
\vskip 8pt

\subsection{Basics of convex bodies}
For quick reference, we collect some basic facts on convex bodies.
Excellent references are the books by Gardner
\cite{BookGardner}, Gruber \cite{BookGrubder} and Schneider
\cite{BookSchneider}.

As usual, write $x\cdot y$  for the standard inner product of
$x,y\in\mathbb{R}^n$. Each compact convex set $K$ in $\mathbb{R}^n$
is uniquely determined by its \emph {support function} $h_K:
\mathbb{R}^n\to\mathbb{R}$, which is defined by $
h_K(x)=\max\left\{x\cdot y: y\in K \right\}$, for  $x\in\mathbb{R}^n$.
It is easily seen that the support function is
positively homogeneous of order $1$.

The class of  compact convex sets in $\mathbb{R}^n$ is often
equipped with the \emph{Hausdorff metric} $\delta_H$, which is defined for compact convex sets $K$ and $L$
by
\[\delta_H(K,L)=\max\left\{\mid h_K(\xi)-h_L(\xi)\mid: \xi\in \mathbb{S}^{n-1}\right\}.\]

Write $\mathcal{K}^n$ for the set of convex bodies in
$\mathbb{R}^n$, and write $\mathcal{K}^n_o$ for the set of convex
bodies with the origin $o$ in their interiors. Let $K$ and $L$ be
compact convex sets. For $s>0$,  the set $s K=\{s x: x\in K\}$ is
called a \emph{dilate} of $K$.  $K$ and $L$ are said to be
\emph{homothetic}, provided $K=sL+x$, for some $s>0$ and
$x\in\mathbb{R}^n$. The \emph{reflection} of $K$  is the set
$-K=\{-x:x\in K\}$. The \emph{Minkowski sum} of $K$ and $L$ is the
set $K+L=\{x+y: x\in K, y\in L\}$.

Given $1\le p < \infty$, $K\in\mathcal{K}^n_o$ and $t>0$, the $L_p$
scalar multiplication $t\cdot  K$ is the convex body
$t^{\frac{1}{p}} K$. For  $K,L\in\mathcal{K}^n_o$, their \emph{$L_p$
sum} (See, e.g., \cite{LpBM(Firey), LpBM(Lutwak1),
LpBM(GardnerHugWeil), LpBM(LYZ11)}) is the convex body $K +_p L$
defined by
\[ h_{K +_p L}(\xi)^p =h_{K}(\xi)^p +h_{L}(\xi)^p, \quad  \xi\in \mathbb{S}^{n-1}.\]
Clearly, $K+_1 L=K+L$.

Let $C(\mathbb{S}^{n-1})$ be the set of  continuous real functions
on $\mathbb{S}^{n-1}$, equipped with the metric induced by the
maximal norm. Let $C_+(\mathbb{S}^{n-1})$ be the subset of
$C(\mathbb{S}^{n-1})$, consisting of strictly positive functions.

For $f,g\in C_+(\mathbb{S}^{n-1})$ and $t>0$, define
\[h+_p t\cdot f=\left(h^p+tf^p\right)^{\frac{1}{p}}.\]
For brevity, write $h+_p f$ for $h+_p 1\cdot f$.

For $f\in C_+(\mathbb{S}^{n-1})$, define
\[ {\scriptstyle \pmb
\lbrack} f {\scriptstyle \pmb \rbrack} =\bigcap\limits_{\xi\in
\mathbb{S}^{n-1} }\{x\in\mathbb{R}^n: x\cdot \xi\le f(\xi)\}. \] The
set ${\scriptstyle \pmb \lbrack} f {\scriptstyle \pmb \rbrack}$ is
called the \emph{Aleksandrov body} (also known as \emph{Wulff
shape}) associated with $f$.  Obviously, ${\scriptstyle \pmb
\lbrack} f {\scriptstyle \pmb \rbrack}$ is a convex body with the
origin in its interior.

\subsection{ Basics of $\mathfrak{p}$-capacity}

In this part, some basics of $\mathfrak{p}$-capacity are listed. For
more details on $\mathfrak{p}$-capacity, see, e.g.,
 \cite{BookEvansGariepy,capacitaryBMtheory(CNSXYZ),BookGilbargTrudinger,Minkproblem(Jerison)}.

Let $1<\mathfrak{p}<n$.  The $\mathfrak{p}$-capacity ${\rm
C}_{\mathfrak{p}}$ is  increasing with respect to the inclusion of
sets. That is, if $E\subseteq F$, then ${\rm C}_{\mathfrak{p}}(E)\le
{\rm C}_{\mathfrak{p}}(F)$. The $\mathfrak{p}$-capacity ${\rm
C}_{\mathfrak{p}}$ is positively homogeneous of order
$(n-\mathfrak{p})$, i.e., ${\rm
C}_{\mathfrak{p}}(sE)=s^{n-\mathfrak{p}}{\rm C}_{\mathfrak{p}}(E)$,
for $s>0$. Also, it is rigid invariant, i.e., ${\rm
C}_{\mathfrak{p}}(OE+x)={\rm C}_{\mathfrak{p}}(E)$, for
$x\in\mathbb{R}^n$ and $O\in \rm{O}(n)$.

For $K\in\mathcal{K}^n$,   the $\mathfrak{p}$-capacitary measure
$\mu_{\mathfrak{p}}(K,\cdot)$ is   positively homogeneous of order
$(n-\mathfrak{p}-1)$, i.e.,
$\mu_{\mathfrak{p}}(sK,\cdot)=s^{n-\mathfrak{p}-1}\mu_{\mathfrak{p}}(K,\cdot)$,
for  $s>0$. For $x\in\mathbb{R}^n$,
$\mu_{\mathfrak{p}}(K+x,\cdot)=\mu_{\mathfrak{p}}(K,\cdot)$,  i.e., it is translation invariant. The centroid of $\mu_{\mathfrak{p}}(K,\cdot)$ is
at the origin, i.e., $\int_{\mathbb{S}^{n-1}}\xi
d\mu_{\mathfrak{p}}(K,\xi)=o.$

The weak convergence  of $\mathfrak{p}$-capacitary measures was
proved by CNSXYZ \cite[p. 1550]{capacitaryBMtheory(CNSXYZ)}: If
$\{K_j\}_{j\in\mathbb{N}}\subset \mathcal{K}^n$ converges to
$K\in\mathcal{K}^n$, then $\{\mu_{\mathfrak{p}}(K_j,\cdot)\}_j$
converges weakly to $\mu_{\mathfrak{p}}(K,\cdot)$.

Let $K\in\mathcal{K}^n_o$ and $f\in C(\mathbb{S}^{n-1})$. There
exists $t_0>0$ such that $ h_K+t f\in C_+(\mathbb{S}^{n-1})$, for
$|t|<t_0$. So, there is a continuous family of Aleksandrov bodies $
{\scriptstyle \pmb \lbrack} h_K+t f {\scriptstyle \pmb \rbrack}$
with $|t|<t_0$. The Hadamard variational formula for
$\mathfrak{p}$-capacity (see \cite[p.
1547]{capacitaryBMtheory(CNSXYZ)}) states that
\begin{equation}\label{HadamardVariationalFormula}
{\left. {\frac{d{{\rm C}_{\mathfrak{p}}}({\scriptstyle \pmb \lbrack}
h_K+t f {\scriptstyle \pmb \rbrack})}{{dt}}} \right|_{t = 0}} =
(\mathfrak{p} - 1)\int\limits_{{\mathbb{S}^{n - 1}}} {f(\xi
)d{\mu_{\mathfrak{p}}}(K,\xi )} .
\end{equation}

For  $K, L\in\mathcal{K}^n$, the \emph{mixed
$\mathfrak{p}$-capacity} $C_{\mathfrak{p}}(K,L)$ (see \cite[p.
1549]{capacitaryBMtheory(CNSXYZ)})   is defined by
\begin{equation}\label{mixedcapacity}
{{\rm C}_{\mathfrak{p}}}(K,L) = \frac{1}{{n - \mathfrak{p}}}{\left.
{\frac{d{{\rm C}_{\mathfrak{p}}}(K + tL)}{{dt}}} \right|_{t = {0^ +
}}} = \frac{{\mathfrak{p} - 1}}{{n -
\mathfrak{p}}}\int\limits_{{\mathbb{S}^{n - 1}}} {{h_L}(\xi
)d{\mu_{\mathfrak{p}}}(K,\xi )} .
\end{equation}
When $L=K$, it  reduces to the Poincar$\rm{\acute{e}}$
$\mathfrak{p}$-capacity formula (1.4). From  the weak convergence of
$\mathfrak{p}$-capacitary measures, it follows that ${\rm
C}_{\mathfrak{p}}(K,L)$ is continuous in $(K,L)$.

The $\mathfrak{p}$-capacitary Brunn-Minkowski inequality, proved by  Colesanti and Salani
\cite{capacitaryBMineq(ColesantiSalani)}, reads: If
$K,L\in\mathcal{K}^n$, then
\begin{equation}
{\rm C}_{\mathfrak{p}}(K+L)^{\frac{1}{n-\mathfrak{p}}}\ge {\rm
C}_{\mathfrak{p}}(K)^{\frac{1}{n-\mathfrak{p}}}
+C_{\mathfrak{p}}(L)^{\frac{1}{n-\mathfrak{p}}},
\end{equation}
with equality if and only if $K$ and $L$ are homothetic. When
$\mathfrak{p}=2$, the inequality was first established by Borell
\cite{capacitaryBMineq(Borell)}, and the equality condition was
shown by Caffarelli, Jerison and Lieb
\cite{BMineqCaffarelliJerison}. For more deatils, see, e.g.,
Colesanti \cite{capacitaryBMineq(Colesanti)}, Gardner
\cite{BMineq(Gardner)}, and Gardner and Hartenstine
\cite{capacitaryBMineq(Gardner)}.

The $\mathfrak{p}$-capacitary
Brunn-Minkowski inequality is equivalent to the
$\mathfrak{p}$-capacitary Minkowski inequality,
\begin{equation}
{\rm C}_{\mathfrak{p}}(K,L)\ge {\rm
C}_{\mathfrak{p}}(K)^{n-\mathfrak{p}-1}{\rm
C}_{\mathfrak{p}}(L),\end{equation} with equality if and only if $K$
and $L$ are homothetic. See \cite[p.
1549]{capacitaryBMtheory(CNSXYZ)} for its proof.

\subsection{Basics of Aleksandrov bodies}

For  $f\in C_+(\mathbb{S}^{n-1})$, define
\begin{equation}\label{DefOfCapOfFunctions}
{\rm C}_{\mathfrak{p}}(f)={\rm C}_{\mathfrak{p}}({\scriptstyle \pmb
\lbrack} f {\scriptstyle \pmb \rbrack}).
\end{equation}
Obviously, ${\rm C}_{\mathfrak{p}}(h_K)={\rm C}_{\mathfrak{p}}(K)$,
for  $K\in\mathcal{K}^n_o$.

The \emph{Aleksandrov convergence lemma} reads: If the sequence
$\{f_j\}_j \subset C_+(\mathbb{S}^{n-1})$ converges uniformly to
$f\in  C_+(\mathbb{S}^{n-1})$, then $\lim_{j\to\infty} {\scriptstyle
\pmb \lbrack} f_j {\scriptstyle \pmb \rbrack}={\scriptstyle \pmb
\lbrack} f {\scriptstyle \pmb \rbrack}$. From this lemma and the
continuity of ${\rm C}_{\mathfrak{p}}$ on $\mathcal{K}^n$, we see
that
 ${\rm C}_{\mathfrak{p}}: C_+(\mathbb{S}^{n-1})\to (0,\infty)$ is continuous.

Let  $1\le p<\infty$ and $1<\mathfrak{p}<n$. For $K\in\mathcal{K}^n_o$ and
nonnegative $f\in C(\mathbb{S}^{n-1})$, define
\begin{equation}\label{DefOfMixedCapOfFunctions}
{\rm C}_{p,\mathfrak{p}}(K,f) = \frac{{\mathfrak{p} - 1}}{{n -
\mathfrak{p}}}\int\limits_{{\mathbb{S}^{n - 1}}} {f{{(\xi
)}^p}{h_K}{{(\xi )}^{1 - p}}d{\mu_{\mathfrak{p}}}(K,\xi )}.
\end{equation}
For brevity, write ${\rm C}_{\mathfrak{p}}(K,f)$ for ${\rm
C}_{1,\mathfrak{p}}(K,f)$. Obviously, ${\rm
C}_{p,\mathfrak{p}}(K,h_K)={\rm C}_{\mathfrak{p}}(K)$.

\begin{lemma}\label{Alekidentity}
 Suppose  $1\le p<\infty$ and $1<\mathfrak{p}<n$. If $f\in C_+(\mathbb{S}^{n-1})$, then
\[{\rm C}_{p,\mathfrak{p}}({\scriptstyle \pmb \lbrack} f {\scriptstyle \pmb \rbrack},f)
={\rm C}_{\mathfrak{p}}({\scriptstyle \pmb \lbrack} f {\scriptstyle
\pmb \rbrack})={\rm C}_{\mathfrak{p}}(f). \]
\end{lemma}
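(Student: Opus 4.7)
The second equality $\mathrm{C}_{\mathfrak{p}}([f])=\mathrm{C}_{\mathfrak{p}}(f)$ is simply the definition (\ref{DefOfCapOfFunctions}), so the real content is to establish $\mathrm{C}_{p,\mathfrak{p}}(K,f)=\mathrm{C}_{\mathfrak{p}}(K)$, where I write $K:=[f]$. Unwinding the definition (\ref{DefOfMixedCapOfFunctions}) and comparing with the Poincar\'e $\mathfrak{p}$-capacity formula (\ref{PoincareFormulaPCap}), this reduces to proving
\[
\int_{\mathbb{S}^{n-1}} f(\xi)^{p}h_{K}(\xi)^{1-p}\,d\mu_{\mathfrak{p}}(K,\xi)=\int_{\mathbb{S}^{n-1}} h_{K}(\xi)\,d\mu_{\mathfrak{p}}(K,\xi).
\]
The plan is to first settle the case $p=1$ via the Hadamard variational formula, upgrade the resulting integral identity to a pointwise equality $\mu_{\mathfrak{p}}(K,\cdot)$-a.e., and then deduce all $p\ge 1$ from that.

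The starting observation is the standard pair of facts $h_{K}\le f$ on $\mathbb{S}^{n-1}$ (because every $y\in K$ satisfies $y\cdot\xi\le f(\xi)$) and $[h_{K}]=K$. Combining these with the evident monotonicity of the Aleksandrov-body operator ($g_{1}\le g_{2}\Rightarrow [g_{1}]\subseteq[g_{2}]$), I obtain for every $t\in[0,1]$
\[
K=[h_{K}]\subseteq\bigl[(1-t)h_{K}+tf\bigr]\subseteq[f]=K,
\]
so the one-parameter family of Aleksandrov bodies $K_{t}:=[h_{K}+t(f-h_{K})]$ is \emph{constant}, equal to $K$, on $[0,1]$. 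Consequently $\mathrm{C}_{\mathfrak{p}}(K_{t})\equiv\mathrm{C}_{\mathfrak{p}}(K)$ and its right derivative at $t=0$ vanishes.

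Applying the Hadamard variational formula (\ref{HadamardVariationalFormula}) with perturbation $f-h_{K}\in C(\mathbb{S}^{n-1})$ then gives
\[
0=\left.\frac{d\mathrm{C}_{\mathfrak{p}}(K_{t})}{dt}\right|_{t=0}=(\mathfrak{p}-1)\int_{\mathbb{S}^{n-1}}(f-h_{K})(\xi)\,d\mu_{\mathfrak{p}}(K,\xi),
\]
which already proves the $p=1$ case: $\mathrm{C}_{\mathfrak{p}}(K,f)=\mathrm{C}_{\mathfrak{p}}(K,h_{K})=\mathrm{C}_{\mathfrak{p}}(K)$. But more can be squeezed out: since $f-h_{K}\ge 0$ everywhere and $\mu_{\mathfrak{p}}(K,\cdot)$ is a nonnegative measure, the integral identity forces $f=h_{K}$ $\mu_{\mathfrak{p}}(K,\cdot)$-almost everywhere on $\mathbb{S}^{n-1}$. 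Hence for any $p\ge 1$,
\[
\int_{\mathbb{S}^{n-1}} f^{p}h_{K}^{1-p}\,d\mu_{\mathfrak{p}}(K,\cdot)=\int_{\mathbb{S}^{n-1}} h_{K}^{p}h_{K}^{1-p}\,d\mu_{\mathfrak{p}}(K,\cdot)=\int_{\mathbb{S}^{n-1}} h_{K}\,d\mu_{\mathfrak{p}}(K,\cdot),
\]
and multiplying by $(\mathfrak{p}-1)/(n-\mathfrak{p})$ yields $\mathrm{C}_{p,\mathfrak{p}}(K,f)=\mathrm{C}_{\mathfrak{p}}(K)$, completing the proof.

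The only step with any subtlety is the justification that $K_{t}=K$ along the whole segment; everything else is an invocation of (\ref{HadamardVariationalFormula}) and the positivity of $\mu_{\mathfrak{p}}(K,\cdot)$. I expect this to be very short in the final write-up.
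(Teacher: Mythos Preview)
Your argument is correct and reaches the same key conclusion as the paper---namely that $f=h_{[f]}$ holds $\mu_{\mathfrak{p}}([f],\cdot)$-almost everywhere---but by a genuinely different route. The paper obtains this a.e.\ equality by quoting Aleksandrov's classical fact that $h_{[f]}=f$ a.e.\ with respect to the surface area measure $S_{[f]}$, and then invoking the absolute continuity of $\mu_{\mathfrak{p}}([f],\cdot)$ with respect to $S_{[f]}$. You instead stay entirely within the $\mathfrak{p}$-capacity machinery: the observation that $[(1-t)h_K+tf]\equiv K$ on $[0,1]$ makes $t\mapsto \mathrm{C}_{\mathfrak{p}}(K_t)$ constant, so the Hadamard variational formula (\ref{HadamardVariationalFormula}) forces $\int(f-h_K)\,d\mu_{\mathfrak{p}}(K,\cdot)=0$, and nonnegativity of the integrand does the rest. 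Your approach avoids importing two external facts (the Aleksandrov lemma for $S_{[f]}$ and the absolute continuity statement), at the cost of one extra line verifying $K_t\equiv K$; the paper's approach is marginally shorter if those facts are taken as known. Both are clean, and yours is arguably more self-contained.
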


 \begin{proof}
Note that $h_{{\scriptstyle \pmb \lbrack} f {\scriptstyle \pmb
\rbrack}}\le f$. A basic fact established by Aleksandrov is that
$h_{{\scriptstyle \pmb \lbrack} f {\scriptstyle \pmb \rbrack}}=f$,
a.e.  with respect to $S_{{\scriptstyle \pmb \lbrack} f
{\scriptstyle \pmb \rbrack}}$. That is, $S_{{\scriptstyle \pmb
\lbrack} f {\scriptstyle \pmb \rbrack}}(\{h_{{\scriptstyle \pmb
\lbrack} f {\scriptstyle \pmb \rbrack}}< f\})=0$. Since
$\mu_{\mathfrak{p}}({\scriptstyle \pmb \lbrack} f {\scriptstyle \pmb
\rbrack},\cdot)$ is absolutely continuous with respect to
$S_{{\scriptstyle \pmb \lbrack} f {\scriptstyle \pmb \rbrack}}$, it
follows that $ \mu_{\mathfrak{p}}({\scriptstyle \pmb \lbrack} f
{\scriptstyle \pmb \rbrack},\{h_{{\scriptstyle \pmb \lbrack} f
{\scriptstyle \pmb \rbrack}}< f\}) =0$. Combining this fact and the
inequality $h_{{\scriptstyle \pmb \lbrack} f {\scriptstyle \pmb
\rbrack}}\le f$, it follows that
\[{\rm C}_{p,\mathfrak{p}}({\scriptstyle \pmb \lbrack} f {\scriptstyle \pmb
\rbrack}, f) - {{\rm C}_{\mathfrak{p}}}(f)= \frac{{\mathfrak{p} -
1}}{{n - \mathfrak{p}}}\int\limits_{\{ f > {h_{{\scriptstyle \pmb
\lbrack} f {\scriptstyle \pmb \rbrack}}}\} } {\left( {f^p -
{h^p_{{\scriptstyle \pmb \lbrack} f {\scriptstyle \pmb \rbrack}}}}
\right){h^{1 - p}_{{\scriptstyle \pmb \lbrack} f {\scriptstyle \pmb
\rbrack}}}d{\mu_{\mathfrak{p}}}({\scriptstyle \pmb \lbrack} f
{\scriptstyle \pmb \rbrack},\xi )}= 0, \] as desired.
\end{proof}

Note that for $K\in\mathcal{K}^n_o$ and $f\in
C_+(\mathbb{S}^{n-1})$, we have ${\rm
C}_{\mathfrak{p}}(K,h_{{\scriptstyle \pmb \lbrack} f {\scriptstyle
\pmb \rbrack}})\le {\rm C}_{\mathfrak{p}}(K,f)$.

\vskip 25pt
\section{\bf The $\boldsymbol{L_p}$ $\boldsymbol{\mathfrak{p}}$-capacitary measure $\boldsymbol{\mu_{p,\mathfrak{p}}(K,\cdot)}$}
\vskip 10pt

\subsection{The first $\boldsymbol{L_p}$ variational of  $\mathfrak{p}$-capacity}

\begin{lemma}\label{variationalFormulaCNSXYZ}
Let $I\subset\mathbb{R}$ be an interval containing both $0$ and some
positive number, and let $h_t(\xi)=h(t,\xi): I\times {\mathbb
S}^{n-1}\to (0,\infty)$ be continuous, such that the convergence in
\[
h'(0,\xi)=\lim_{t\to 0}\frac{h(t,\xi)-h(0,\xi)}{t}
\]
is uniform on ${\mathbb S}^{n-1}$. Then
\[\lim_{t\to 0^+}\frac{ {\rm C}_{\mathfrak p}(h_t)-{\rm C}_{\mathfrak p}(h_0) }{t}
= ({\mathfrak{p}} - 1)\int\limits_{{\mathbb{S}^{n - 1}}} {h'(0,\xi )d{\mu _{\mathfrak{p}}}({\scriptstyle \pmb \lbrack} h_0 {\scriptstyle
\pmb \rbrack},\xi )} .\]
\end{lemma}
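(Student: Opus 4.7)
My plan is a two-sided squeeze argument: the lower bound comes from the Hadamard variational formula $(\ref{HadamardVariationalFormula})$, while the matching upper bound is extracted from the $\mathfrak{p}$-capacitary Minkowski inequality of Section~2.2. Set $K_0={\scriptstyle \pmb \lbrack} h_0 {\scriptstyle \pmb \rbrack}$, $g(\xi)=h'(0,\xi)$, and write $h_t=h_0+t\psi_t$, where the hypothesis gives $\psi_t\to g$ uniformly on $\mathbb{S}^{n-1}$ as $t\to 0^+$. A preliminary observation, used throughout, is Aleksandrov's identity $h_{K_0}=h_0$ $S(K_0,\cdot)$-a.e., which by the absolute continuity of $\mu_{\mathfrak{p}}(K_0,\cdot)$ with respect to $S(K_0,\cdot)$ (as in the proof of Lemma~\ref{Alekidentity}) also holds $\mu_{\mathfrak{p}}(K_0,\cdot)$-a.e.

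For the lower bound, fix $\varepsilon>0$ and choose $t>0$ so small that $\psi_t\ge g-\varepsilon$ uniformly on $\mathbb{S}^{n-1}$. Then $h_t\ge h_{K_0}+t(g-\varepsilon)$, and monotonicity of the Aleksandrov body operation gives ${\rm C}_\mathfrak{p}(h_t)\ge{\rm C}_\mathfrak{p}({\scriptstyle \pmb \lbrack} h_{K_0}+t(g-\varepsilon) {\scriptstyle \pmb \rbrack})$. Applying $(\ref{HadamardVariationalFormula})$ to the family $h_{K_0}+t(g-\varepsilon)$, dividing by $t$, taking $\liminf_{t\to 0^+}$, and finally letting $\varepsilon\to 0^+$ yields
\[
\liminf_{t\to 0^+}\frac{{\rm C}_\mathfrak{p}(h_t)-{\rm C}_\mathfrak{p}(h_0)}{t}\ge(\mathfrak{p}-1)\int_{\mathbb{S}^{n-1}}g(\xi)\,d\mu_\mathfrak{p}(K_0,\xi).
\]

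For the upper bound, note that $h_{{\scriptstyle \pmb \lbrack} h_t {\scriptstyle \pmb \rbrack}}\le h_t=h_0+t\psi_t$ pointwise. The definition of mixed $\mathfrak{p}$-capacity in $(\ref{mixedcapacity})$, combined with the $\mu_\mathfrak{p}(K_0,\cdot)$-a.e.\ equality $h_0=h_{K_0}$, gives
\[
{\rm C}_\mathfrak{p}(K_0,{\scriptstyle \pmb \lbrack} h_t {\scriptstyle \pmb \rbrack})\le{\rm C}_\mathfrak{p}(K_0)+t\,\frac{\mathfrak{p}-1}{n-\mathfrak{p}}\int_{\mathbb{S}^{n-1}}\psi_t(\xi)\,d\mu_\mathfrak{p}(K_0,\xi).
\]
The $\mathfrak{p}$-capacitary Minkowski inequality then delivers
\[
{\rm C}_\mathfrak{p}({\scriptstyle \pmb \lbrack} h_t {\scriptstyle \pmb \rbrack})\le\frac{{\rm C}_\mathfrak{p}(K_0,{\scriptstyle \pmb \lbrack} h_t {\scriptstyle \pmb \rbrack})^{n-\mathfrak{p}}}{{\rm C}_\mathfrak{p}(K_0)^{n-\mathfrak{p}-1}}.
\]
A first-order Taylor expansion of the resulting upper estimate at $t=0$, together with the uniform convergence $\psi_t\to g$ (which forces $\int\psi_t\,d\mu_\mathfrak{p}(K_0,\cdot)\to\int g\,d\mu_\mathfrak{p}(K_0,\cdot)$), produces
\[
\limsup_{t\to 0^+}\frac{{\rm C}_\mathfrak{p}(h_t)-{\rm C}_\mathfrak{p}(h_0)}{t}\le(\mathfrak{p}-1)\int_{\mathbb{S}^{n-1}}g(\xi)\,d\mu_\mathfrak{p}(K_0,\xi).
\]

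Combining the two estimates gives the claimed formula. The principal difficulty is the upper bound: the hypothesis allows $h_0$ to exceed its Aleksandrov support function $h_{K_0}$ on a set that is negligible for $\mu_\mathfrak{p}(K_0,\cdot)$ but may be large otherwise, so the Hadamard formula cannot by itself linearise ${\rm C}_\mathfrak{p}({\scriptstyle \pmb \lbrack} h_0+t\psi_t {\scriptstyle \pmb \rbrack})$ from above with the correct slope; it is precisely the Minkowski inequality, sharp at $K_0$, that supplies the matching first-order upper estimate. The uniform convergence hypothesis is used in two roles: to replace $\psi_t$ by $g\pm\varepsilon$ pointwise in the lower bound, and to pass the limit inside the integral against $\mu_\mathfrak{p}(K_0,\cdot)$ in the upper bound.
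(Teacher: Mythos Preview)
Your argument is correct. The paper itself does not prove this lemma: it is quoted from CNSXYZ \cite{capacitaryBMtheory(CNSXYZ)} (hence the label), where the variational formula is obtained by direct analysis of the $\mathfrak{p}$-equilibrium potential. Your proof is therefore genuinely different in character. You reduce the general statement (where $h_0$ is merely a positive continuous function) to the special case $(\ref{HadamardVariationalFormula})$ (where $h_0=h_K$ for some $K\in\mathcal{K}^n_o$), using only convex-geometric tools already on the table in Section~2: monotonicity of ${\rm C}_\mathfrak{p}$ under set inclusion for the lower bound, and the $\mathfrak{p}$-capacitary Minkowski inequality, sharp at $K_0$, for the upper bound. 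This is a clean soft argument that avoids reopening the PDE machinery of \cite{capacitaryBMtheory(CNSXYZ)}; the price is that it takes $(\ref{HadamardVariationalFormula})$ and the Minkowski inequality as black boxes, whereas the original CNSXYZ proof derives the full variational formula in one stroke from properties of the equilibrium potential. Your identification of the key difficulty---that $h_0$ may strictly exceed $h_{K_0}$ off a $\mu_\mathfrak{p}(K_0,\cdot)$-null set, so $(\ref{HadamardVariationalFormula})$ alone cannot give the upper bound---is exactly right, and the use of the Minkowski inequality to close that gap is the decisive idea.
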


\begin{lemma}\label{LqVariationalFormulaOfCap1}
Suppose $1\leq p<\infty$ and $1<\mathfrak{p}<n$. If $K\in
\mathcal{K}^n_o$ and $f\in C(\mathbb{S}^{n-1})$ is nonnegative, then
\[{\left. {\frac{{d{{\rm{C}}_\mathfrak{p}}({h_K}{ + _p}t \cdot f)}}{dt}} \right|_{t = 0^+}} = \frac{{n - \mathfrak{p}}}{p}{\rm{C}}_{p,\mathfrak{p}}(K,f).\]
\end{lemma}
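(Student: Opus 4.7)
The plan is to apply Lemma \ref{variationalFormulaCNSXYZ} with the specific choice $h_t(\xi) = (h_K(\xi)^p + tf(\xi)^p)^{1/p}$, which is precisely $h_K +_p t\cdot f$, and then identify the resulting integral with $\tfrac{n-\mathfrak{p}}{p}\mathrm{C}_{p,\mathfrak{p}}(K,f)$ via the definition in (\ref{DefOfMixedCapOfFunctions}). Observe first that since $K\in\mathcal{K}^n_o$, the support function $h_K$ lies in $C_+(\mathbb{S}^{n-1})$, so $h_t\in C_+(\mathbb{S}^{n-1})$ for all sufficiently small $t\ge 0$ and the joint continuity of $h(t,\xi)$ on $I\times\mathbb{S}^{n-1}$ is immediate. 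Moreover, $\lbrack h_0\rbrack = \lbrack h_K\rbrack = K$, so the capacitary measure appearing in the conclusion of Lemma \ref{variationalFormulaCNSXYZ} is exactly $\mu_{\mathfrak{p}}(K,\cdot)$.

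The key computation is the pointwise derivative
\[
h'(0,\xi) = \left.\frac{d}{dt}(h_K(\xi)^p + tf(\xi)^p)^{1/p}\right|_{t=0^+} = \frac{1}{p}\,h_K(\xi)^{1-p}\,f(\xi)^p,
\]
obtained by direct differentiation (or Taylor expansion $(a^p+tb^p)^{1/p}=a+\tfrac{t}{p}a^{1-p}b^p+O(t^2)$). To apply Lemma \ref{variationalFormulaCNSXYZ} I must verify that this convergence is uniform in $\xi$. This is where the assumption $K\in\mathcal{K}^n_o$ is crucial: it guarantees $h_K\ge c>0$ on $\mathbb{S}^{n-1}$ for some constant $c$. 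Combined with the boundedness of $f$, the error term in the Taylor expansion is $O(t^2)$ uniformly in $\xi$, which yields the required uniform convergence.

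With uniform convergence in hand, Lemma \ref{variationalFormulaCNSXYZ} gives
\[
\left.\frac{d\mathrm{C}_{\mathfrak{p}}(h_t)}{dt}\right|_{t=0^+}
= (\mathfrak{p}-1)\int_{\mathbb{S}^{n-1}} \frac{1}{p}\,h_K(\xi)^{1-p}f(\xi)^p\,d\mu_{\mathfrak{p}}(K,\xi)
= \frac{\mathfrak{p}-1}{p}\int_{\mathbb{S}^{n-1}} f(\xi)^p h_K(\xi)^{1-p}\,d\mu_{\mathfrak{p}}(K,\xi).
\]
Recognizing the integral via (\ref{DefOfMixedCapOfFunctions}) as $\tfrac{n-\mathfrak{p}}{\mathfrak{p}-1}\mathrm{C}_{p,\mathfrak{p}}(K,f)$, and noting that $\mathrm{C}_{\mathfrak{p}}(h_t)$ coincides with $\mathrm{C}_{\mathfrak{p}}(h_K+_p t\cdot f)$ by (\ref{DefOfCapOfFunctions}), the stated formula $\tfrac{n-\mathfrak{p}}{p}\mathrm{C}_{p,\mathfrak{p}}(K,f)$ follows immediately.

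The only nontrivial step is the verification of the uniform convergence hypothesis of Lemma \ref{variationalFormulaCNSXYZ}; everything else is algebraic bookkeeping. The $p=1$ case reduces to the Hadamard variational formula (\ref{HadamardVariationalFormula}), providing a sanity check.
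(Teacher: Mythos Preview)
Your proof is correct and follows essentially the same approach as the paper: apply Lemma~\ref{variationalFormulaCNSXYZ} to $h_t = h_K +_p t\cdot f$, compute the uniform limit $\lim_{t\to 0^+}\frac{h_t-h_K}{t}=\frac{1}{p}f^p h_K^{1-p}$, and identify the resulting integral via (\ref{DefOfMixedCapOfFunctions}). The paper's proof is terser but identical in substance; your added justification of uniform convergence via the lower bound $h_K\ge c>0$ is a welcome elaboration of what the paper states without argument.
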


\begin{proof}
Take an interval $I=[0,t_0]$ for $0<t_0<\infty$. Since
$h_t(\xi)=h(t,\xi)=(h_K+_p t\cdot f)(\xi): I\times {\mathbb
S}^{n-1}\to (0,\infty)$ is continuous, and
\[\lim_{t\to 0^+}\frac{{({h_K}{ +_p}t \cdot f) - {h_K}}}{t} =
\frac{{{f^p}h_K^{1 - p}}}{p}\]
 uniformly on $\mathbb{S}^{n-1}$, the desired lemma is a consequence of Lemma \ref{variationalFormulaCNSXYZ} and (\ref{DefOfMixedCapOfFunctions}).
\end{proof}

Note that when $p=1$,  Lemma \ref{LqVariationalFormulaOfCap1}
reduces to the Hadamard variational formula
(\ref{HadamardVariationalFormula}).

\begin{corollary}\label{LqVariationalFormulaOfCap2}
Suppose $1\leq p<\infty$ and $1<\mathfrak{p}<n$.
If  $K\in\mathcal{K}^n_o$ and $L$ is a compact convex set containing
the origin,  then
\[{\left. {\frac{d {{\rm C}_{\mathfrak{p}}}(K{ +_p}t \cdot L)}{{dt }}} \right|_{t = {0^ + }}} = \frac{{\mathfrak{p} - 1}}{p}\int\limits_{{\mathbb{S}^{n - 1}}}{h_L}{{(\xi )}^p} {{h_K}{{(\xi )}^{1 - p}}d{\mu_{\mathfrak{p}}}(K,\xi )} .\]
\end{corollary}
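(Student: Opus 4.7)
The plan is to derive this corollary as a direct specialization of Lemma \ref{LqVariationalFormulaOfCap1}, using the observation that the $L_p$ sum of convex bodies has a support function that coincides with the $+_p$ combination of support functions, so the $\mathfrak{p}$-capacity of the body equals the $\mathfrak{p}$-capacity (in the Aleksandrov-body sense of \eqref{DefOfCapOfFunctions}) of its support function.

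First, I would set $f = h_L$. Since $L$ is a compact convex set containing the origin, $h_L$ is continuous on $\mathbb{S}^{n-1}$ and nonnegative, so $f$ satisfies the hypotheses of Lemma \ref{LqVariationalFormulaOfCap1}. Next, I would invoke the definition of the $L_p$ scalar multiplication and $L_p$ sum: by construction $h_{K+_p t\cdot_p L}(\xi)^p = h_K(\xi)^p + t\,h_L(\xi)^p$, that is $h_{K+_p t\cdot_p L} = h_K +_p t\cdot h_L$ pointwise on $\mathbb{S}^{n-1}$. Since any convex body coincides with the Aleksandrov body of its own support function, ${\scriptstyle \pmb \lbrack} h_K +_p t\cdot h_L {\scriptstyle \pmb \rbrack} = K +_p t\cdot_p L$, and therefore by \eqref{DefOfCapOfFunctions},
\[
{\rm C}_{\mathfrak{p}}\!\left(h_K +_p t\cdot h_L\right) = {\rm C}_{\mathfrak{p}}(K +_p t\cdot_p L).
\]

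With this identification, Lemma \ref{LqVariationalFormulaOfCap1} applied to $f = h_L$ yields
\[
{\left. \frac{d\,{\rm C}_{\mathfrak{p}}(K +_p t\cdot_p L)}{dt}\right|_{t=0^+}} = \frac{n-\mathfrak{p}}{p}\,{\rm C}_{p,\mathfrak{p}}(K,h_L).
\]
Finally, I would unwind the definition \eqref{DefOfMixedCapOfFunctions} of ${\rm C}_{p,\mathfrak{p}}(K,f)$ with $f = h_L$, so that the factor $\frac{\mathfrak{p}-1}{n-\mathfrak{p}}$ inside the integral cancels the prefactor $n-\mathfrak{p}$ to give precisely $\frac{\mathfrak{p}-1}{p}\int_{\mathbb{S}^{n-1}} h_L(\xi)^p h_K(\xi)^{1-p}\,d\mu_{\mathfrak{p}}(K,\xi)$, as claimed.

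There is essentially no obstacle here; the only point requiring attention is the consistency check that $h_L \in C(\mathbb{S}^{n-1})$ is nonnegative (which uses only $0 \in L$ and the compactness/convexity of $L$, not that $L$ has nonempty interior) and that the support function identity $h_{K+_p t\cdot_p L} = (h_K^p + t h_L^p)^{1/p}$ is valid for all $\xi \in \mathbb{S}^{n-1}$ when $K \in \mathcal{K}_o^n$ so that $h_K > 0$ on $\mathbb{S}^{n-1}$. Both of these are immediate, so the corollary reduces to bookkeeping on top of Lemma \ref{LqVariationalFormulaOfCap1}.
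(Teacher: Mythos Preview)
Your proposal is correct and takes essentially the same approach as the paper: the corollary is stated immediately after Lemma \ref{LqVariationalFormulaOfCap1} without an explicit proof, and the intended argument is precisely to specialize that lemma to $f=h_L$, identify ${\rm C}_{\mathfrak{p}}(h_K +_p t\cdot h_L)$ with ${\rm C}_{\mathfrak{p}}(K +_p t\cdot L)$, and then unpack the definition \eqref{DefOfMixedCapOfFunctions}.
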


Let $1<\mathfrak{p}<n$. Now, we can introduce the following
definitions.
\begin{definition}\label{DefLqMixedCap}
If $1\le p<\infty$,  $K\in\mathcal{K}^n_o$ and $L$ is a compact
convex set containing the origin, then the quantity ${{\rm
C}_{p,\mathfrak{p}}}(K,L)$ defined by
\[{{\rm C}_{p,\mathfrak{p}}}(K,L) = \frac{{\mathfrak{p} - 1}}{{n - \mathfrak{p}}}\int\limits_{{\mathbb{S}^{n - 1}}}{h_L}{{(\xi )}^p} {{h_K}{{(\xi )}^{1 - p}}d{\mu_{\mathfrak{p}}}(K,\xi )},\]
is called the $L_p$ \emph{mixed}  $\mathfrak{p}$-\emph{capacity} of
$K$ and $L$.
\end{definition}

\begin{definition}\label{DefOfLqCapMeasure}
If $p\in\mathbb{R}$ and $K\in\mathcal{K}^n_o$, then the Borel
measure $\mu_{p, \mathfrak{p}}(K,\cdot)$ on $\mathbb{S}^{n-1}$,
defined by
\[\mu_{p,\mathfrak{p}}(K,\omega)=\int\limits_{\omega}h_K^{1-p}d\mu_{\mathfrak{p}}(K,\cdot),\]
for Borel $\omega\subseteq \mathbb{S}^{n-1}$, is called the $L_p$
$\mathfrak{p}$-\emph{capacitary measure} of $K$.
\end{definition}
Obviously,
${\rm C}_{1,\mathfrak{p}} (K,L)={\rm
C}_{\mathfrak{p}}(K,L)$,
${\rm C}_{p,\mathfrak{p}}(K,K)={\rm
C}_{\mathfrak{p}}(K)$ and ${\rm C}_{p,\mathfrak{p}}(K,h_L)={\rm
C}_{p,\mathfrak{p}}(K,L)$. Also,
$\mu_{1,\mathfrak{p}}(K,\cdot)=\mu_{\mathfrak{p}}(K,\cdot)$,
$\frac{{\mathfrak p}-1}{n-{\mathfrak p}}
\mu_{0,\mathfrak{p}}(K,\mathbb{S}^{n-1})={\rm{C}}_\mathfrak{p}(K)$.
In
addition, ${\rm C}_{p,\mathfrak{p}}(OK,OL)={\rm
C}_{p,\mathfrak{p}}(K,L)$, for $O\in {\rm{O}}(n)$.

As the $L_p$ mixed volume $V_{p}(K,L)$ and the $L_p$ surface area
measure $S_{p}(K,\cdot)$ greatly extend the first mixed volume
$V_{1}(K,L)$ and the classical surface area measure $S(K,\cdot)$ in
convex geometry, respectively, ${\rm C}_{p,\mathfrak{p}}(K,L)$ and
$\mu_{p,\mathfrak{p}}(K,\cdot)$ are precisely the $L_p$ extensions
of the mixed $\mathfrak{p}$-capacity ${\rm C}_{\mathfrak{p}}(K,L)$
and  the $\mathfrak{p}$-capacitary measure
$\mu_{\mathfrak{p}}(K,\cdot)$, respectively.

The next lemma shows that ${\rm C}_{p,\mathfrak{p}}(K,L)$ is
continuous in $(K,L,p)$.

\begin{lemma}
Suppose that $K_i, L_i, K, L \in \mathcal{K}^n_o$,  $ p_i,p\in[1,\infty)$, $i\in\mathbb{N}$ and $1<\mathfrak{p}<n$.
 If $(K_i,L_i)\to (K,L)$ and $p_i\to p$, as $i\to\infty$, then $ {\rm
C}_{p_i,\mathfrak{p}}(K_{i},L_{i}) \to {\rm
C}_{p,\mathfrak{p}}(K,L)$.
\end{lemma}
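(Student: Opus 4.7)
The plan is to verify continuity by combining three ingredients: Hausdorff (hence uniform) convergence of support functions, uniform convergence of the mixed integrand even as the exponent $p_i$ varies, and the weak continuity of the $\mathfrak{p}$-capacitary measure established by CNSXYZ.

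First I would record the obvious consequence of $(K_i,L_i)\to(K,L)$ in $\mathcal{K}^n_o$: the support functions satisfy $h_{K_i}\to h_K$ and $h_{L_i}\to h_L$ uniformly on $\mathbb{S}^{n-1}$, and since $K,L$ contain the origin in their interiors, there exist constants $0<a<b<\infty$ with
\[
a\le h_{K_i},\,h_{L_i},\,h_K,\,h_L\le b\qquad\text{on }\mathbb{S}^{n-1}
\]
for all sufficiently large $i$. Because $p_i\to p$, we may also assume $p_i\in[p-1,p+1]\cap[1,\infty)$ for large $i$. The function $\Phi:[a,b]\times[a,b]\times[\max\{1,p-1\},p+1]\to(0,\infty)$ defined by $\Phi(x,y,q)=y^{q}x^{1-q}$ is continuous on a compact set, hence uniformly continuous. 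Consequently
\[
f_i(\xi):=h_{L_i}(\xi)^{p_i}h_{K_i}(\xi)^{1-p_i}\longrightarrow h_L(\xi)^{p}h_K(\xi)^{1-p}=:f(\xi)\qquad\text{uniformly on }\mathbb{S}^{n-1}.
\]

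Next I would invoke the weak convergence of the $\mathfrak{p}$-capacitary measures recalled in Section 2.2: since $K_i\to K$ in $\mathcal{K}^n$, one has $\mu_{\mathfrak{p}}(K_i,\cdot)\to\mu_{\mathfrak{p}}(K,\cdot)$ weakly on $\mathbb{S}^{n-1}$. In particular the total masses are bounded, say $\mu_{\mathfrak{p}}(K_i,\mathbb{S}^{n-1})\le M$ for all $i$. Writing
\[
\int_{\mathbb{S}^{n-1}}f_i\,d\mu_{\mathfrak{p}}(K_i,\cdot)-\int_{\mathbb{S}^{n-1}}f\,d\mu_{\mathfrak{p}}(K,\cdot)
=\int_{\mathbb{S}^{n-1}}(f_i-f)\,d\mu_{\mathfrak{p}}(K_i,\cdot)+\int_{\mathbb{S}^{n-1}}f\,d\mu_{\mathfrak{p}}(K_i,\cdot)-\int_{\mathbb{S}^{n-1}}f\,d\mu_{\mathfrak{p}}(K,\cdot),
\]
the first summand is bounded by $M\|f_i-f\|_{\infty}\to 0$ by the uniform convergence above, while the second summand tends to $0$ because $f\in C(\mathbb{S}^{n-1})$ and $\mu_{\mathfrak{p}}(K_i,\cdot)\to\mu_{\mathfrak{p}}(K,\cdot)$ weakly. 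Multiplying by $(\mathfrak{p}-1)/(n-\mathfrak{p})$ and using Definition \ref{DefLqMixedCap} gives ${\rm C}_{p_i,\mathfrak{p}}(K_i,L_i)\to {\rm C}_{p,\mathfrak{p}}(K,L)$.

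I do not expect any serious obstacle; the only mildly delicate point is the simultaneous variation of the integrand and the exponent, which is handled cleanly by uniform continuity of $\Phi$ on a compact box obtained from the positivity of $h_K,h_L$ on $\mathbb{S}^{n-1}$. Everything else is a standard application of weak convergence combined with uniform approximation of a continuous test function.
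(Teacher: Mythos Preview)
Your proof is correct and follows essentially the same approach as the paper: both arguments establish uniform convergence of the integrand (the paper via the ratio $h_{L_i}/h_{K_i}$ and uniform convergence of $t^{p_i}\to t^p$ on a compact interval, you via uniform continuity of $(x,y,q)\mapsto y^q x^{1-q}$ on a compact box), combine this with the weak convergence $\mu_{\mathfrak{p}}(K_i,\cdot)\to\mu_{\mathfrak{p}}(K,\cdot)$, and conclude. Your version is slightly more explicit in writing out the standard splitting of the integral, which the paper leaves implicit.
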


\begin{proof}
Since  $h_{K_i}, h_{L_i}>0$ and  $h_{K_i}\to h_{K}$, $h_{L_i}\to
h_L$ uniformly on $\mathbb{S}^{n-1}$,  it follows that
${h_{L_i}}/{h_{K_i}}\to {h_L}/{h_K}$ uniformly on
$\mathbb{S}^{n-1}$. Clearly, there exists a compact interval
$I\subset (0,\infty)$, such that ${h_{L_i}}/{h_{K_i}}\in I$ for  all
$i$. Since the sequence
 $t^{p_i}$ converges uniformly to
$t^p$ on $I$, it follows that   $
\left({h_{L_i}}/{h_{K_i}}\right)^{p_i}\to \left(
{h_L}/{h_K}\right)^p$, uniformly on $\mathbb{S}^{n-1}$. Meanwhile,
the convergence $K_i\to K$ implies that
$\mu_{\mathfrak{p}}(K_i,\cdot)\to\mu_{\mathfrak{p}}(K,\cdot)$
weakly. By Definition \ref{DefLqMixedCap}, the desired limit is
obtained.
\end{proof}

The weak convergence of $\mathfrak{p}$-capacitary measures implies
the weak convergence of $\mu_{p,\mathfrak{p}}$.
\begin{lemma}
Suppose that $K_i,K\in \mathcal{K}^n_o$, $i\in\mathbb{N}$, $1\leq
p<\infty$ and  $1<\mathfrak{p}<n$.  If $K_i\to K$, as $i\to\infty$,
then
$\mu_{p,\mathfrak{p}}(K_i,\cdot)\to\mu_{p,\mathfrak{p}}(K,\cdot)$
weakly.
\end{lemma}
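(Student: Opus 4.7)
The plan is to reduce the weak convergence of $\mu_{p,\mathfrak{p}}(K_i,\cdot)$ to the already-quoted weak convergence of the $\mathfrak{p}$-capacitary measures $\mu_{\mathfrak{p}}(K_i,\cdot) \to \mu_{\mathfrak{p}}(K,\cdot)$ from CNSXYZ. By Definition \ref{DefOfLqCapMeasure}, what must be shown is that for every $f\in C(\mathbb{S}^{n-1})$,
\[
\int_{\mathbb{S}^{n-1}} f(\xi)\, h_{K_i}(\xi)^{1-p}\, d\mu_{\mathfrak{p}}(K_i,\xi)\ \longrightarrow\ \int_{\mathbb{S}^{n-1}} f(\xi)\, h_K(\xi)^{1-p}\, d\mu_{\mathfrak{p}}(K,\xi).
\]

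First I would exploit that $K\in\mathcal{K}^n_o$ to get a uniform positive lower bound on the support functions. Since $h_K$ is continuous and strictly positive on the compact set $\mathbb{S}^{n-1}$, there exists $c>0$ with $h_K\ge c$. The convergence $K_i\to K$ in the Hausdorff metric is equivalent to uniform convergence $h_{K_i}\to h_K$ on $\mathbb{S}^{n-1}$, so for all $i$ sufficiently large one has $h_{K_i}\ge c/2 >0$ uniformly. On the compact range $[c/2,\max_i\|h_{K_i}\|_\infty]\subset(0,\infty)$ the map $t\mapsto t^{1-p}$ is uniformly continuous, so $h_{K_i}^{1-p}\to h_K^{1-p}$ uniformly on $\mathbb{S}^{n-1}$. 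Consequently the continuous functions $g_i:=f\cdot h_{K_i}^{1-p}$ converge uniformly to $g:=f\cdot h_K^{1-p}$.

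Next I would write the standard split
\[
\int_{\mathbb{S}^{n-1}} g_i\, d\mu_{\mathfrak{p}}(K_i,\cdot)-\int_{\mathbb{S}^{n-1}} g\, d\mu_{\mathfrak{p}}(K,\cdot)
=\int_{\mathbb{S}^{n-1}}(g_i-g)\, d\mu_{\mathfrak{p}}(K_i,\cdot)+\int_{\mathbb{S}^{n-1}} g\, d\bigl(\mu_{\mathfrak{p}}(K_i,\cdot)-\mu_{\mathfrak{p}}(K,\cdot)\bigr).
\]
The first term is bounded by $\|g_i-g\|_\infty \cdot \mu_{\mathfrak{p}}(K_i,\mathbb{S}^{n-1})$; the total masses $\mu_{\mathfrak{p}}(K_i,\mathbb{S}^{n-1})$ are uniformly bounded because weak convergence of the $\mathfrak{p}$-capacitary measures forces boundedness of their total masses (alternatively, by the Poincar\'e $\mathfrak{p}$-capacity formula (\ref{PoincareFormulaPCap}) they are controlled by ${\rm C}_{\mathfrak{p}}(K_i)/\min h_{K_i}$, which is bounded via continuity of ${\rm C}_{\mathfrak{p}}$ and the uniform lower bound on $h_{K_i}$). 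So the first term tends to $0$. The second term tends to $0$ because $g$ is a fixed continuous function on $\mathbb{S}^{n-1}$ and $\mu_{\mathfrak{p}}(K_i,\cdot)\to\mu_{\mathfrak{p}}(K,\cdot)$ weakly.

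There is no serious obstacle here; the only point requiring care is the uniform positive lower bound on $h_{K_i}$, which is what makes the negative exponent $1-p$ harmless and forces uniform convergence of $h_{K_i}^{1-p}$. Once that is in hand, the conclusion is the routine fact that uniformly convergent integrands integrated against weakly convergent finite measures (of uniformly bounded mass) yield convergent integrals, establishing $\mu_{p,\mathfrak{p}}(K_i,\cdot)\to\mu_{p,\mathfrak{p}}(K,\cdot)$ weakly.
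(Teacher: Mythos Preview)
Your argument is correct and is exactly the natural elaboration of what the paper indicates: the paper states this lemma without proof, merely noting that ``the weak convergence of $\mathfrak{p}$-capacitary measures implies the weak convergence of $\mu_{p,\mathfrak{p}}$.'' Your proof supplies precisely the standard details behind that sentence---uniform convergence of $h_{K_i}^{1-p}$ via the uniform positive lower bound on $h_{K_i}$, followed by the routine split against weakly convergent measures of bounded mass---so there is nothing to add or compare.
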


From the $(n-\mathfrak{p}-1)$-order positive homogeneity of
$\mathfrak{p}$-capacitary measures, the  positive homogeneity of support
functions and Definition \ref{DefOfLqCapMeasure}, we obtain the following result.
\begin{lemma}\label{positivehomogeneitymeasure2}
Suppose that $K\in\mathcal{K}^n_o$, $1\le p<\infty$ and $1<\mathfrak{p}<n$.
Then for $s>0$,
\[\mu_{p,\mathfrak{p}}(sK,\cdot)=s^{n-\mathfrak{p}-p}\mu_{p,\mathfrak{p}}(K,\cdot).\]
\end{lemma}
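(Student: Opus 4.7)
The plan is to unwind Definition \ref{DefOfLqCapMeasure} applied to the dilate $sK$, and then apply the two homogeneity properties already recorded in the paper: the positive $1$-homogeneity of the support function ($h_{sK} = s\, h_K$) and the positive $(n-\mathfrak{p}-1)$-homogeneity of the $\mathfrak{p}$-capacitary measure ($\mu_{\mathfrak{p}}(sK,\cdot) = s^{n-\mathfrak{p}-1}\mu_{\mathfrak{p}}(K,\cdot)$), both stated in Section~2.2.

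Concretely, for any Borel set $\omega \subseteq \mathbb{S}^{n-1}$, I would write
\[
\mu_{p,\mathfrak{p}}(sK,\omega) = \int\limits_{\omega} h_{sK}(\xi)^{1-p}\, d\mu_{\mathfrak{p}}(sK,\xi)
= s^{1-p} \int\limits_{\omega} h_{K}(\xi)^{1-p}\, d\mu_{\mathfrak{p}}(sK,\xi),
\]
pulling the scalar $s^{1-p}$ outside the integral by the homogeneity of $h_K$. Then, using the $(n-\mathfrak{p}-1)$-homogeneity of $\mu_{\mathfrak{p}}(\cdot,\cdot)$ in its first argument to replace $d\mu_{\mathfrak{p}}(sK,\xi)$ by $s^{n-\mathfrak{p}-1}\, d\mu_{\mathfrak{p}}(K,\xi)$, the combined scalar factor is $s^{1-p}\cdot s^{n-\mathfrak{p}-1} = s^{n-\mathfrak{p}-p}$. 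Recognizing the remaining integral as $\mu_{p,\mathfrak{p}}(K,\omega)$ by Definition \ref{DefOfLqCapMeasure} gives the claim.

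There is essentially no obstacle here: the lemma is a direct bookkeeping consequence of two scaling laws already established, and the short computation above is the whole argument. The only minor point to note is that $K \in \mathcal{K}^n_o$ guarantees $h_K > 0$ on $\mathbb{S}^{n-1}$, so the factor $h_K^{1-p}$ is well-defined for all $p \in [1,\infty)$ and the manipulation is unambiguous.
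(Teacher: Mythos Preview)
Your proposal is correct and matches the paper's approach exactly: the paper does not even write out a formal proof, stating only that the lemma follows from the $(n-\mathfrak{p}-1)$-order positive homogeneity of $\mathfrak{p}$-capacitary measures, the positive homogeneity of support functions, and Definition~\ref{DefOfLqCapMeasure}. Your computation is precisely this unwinding.
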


\subsection{$\boldsymbol{L_p}$  Minkowski inequality for $\mathfrak{p}$-capacity}

In this part, we will show that associated with ${\rm
C}_{p,\mathfrak{p}}(K,L)$, there is a natural $L_{p}$ extension of
the  $\mathfrak{p}$-capacitary Minkowski inequality. Then we will
use it to extend the $\mathfrak{p}$-capacitary Brunn-Minkowski
inequality to the $L_p$ stage. It is interesting that the $L_p$
Brunn-Minkowski type inequality for $\mathfrak{p}$-capacity was
previously established in  \cite{LpBM(ZouXiong5)} by the authors'
$L_{p}$ transference principle.

\begin{theorem}\label{p-capMinkowskiineqthm}
Suppose  $1\leq p<\infty$ and $1<\mathfrak{p}<n$. If $K\in\mathcal{K}^n_o$ and $f\in C_+(\mathbb{S}^{n-1})$,  then
\begin{equation}\label{ineq1}
{\rm C}_{p,\mathfrak{p}}{(K,f)^{n - \mathfrak{p}}} \ge {{\rm
C}_{\mathfrak{p}}}{(K)^{n - \mathfrak{p} - p}}{{\rm
C}_{\mathfrak{p}}}{(f)^p},
\end{equation}
with equality if and only if $K$ and ${\scriptstyle \pmb \lbrack} f
{\scriptstyle \pmb \rbrack}$ are dilates.
\end{theorem}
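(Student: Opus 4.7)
\emph{Plan.} The plan is to derive this $L_p$ inequality from two ingredients already present in the paper: a Jensen step that exploits the convexity of $t\mapsto t^p$ for $p\ge 1$, and the classical ($L_1$) $\mathfrak{p}$-capacitary Minkowski inequality (2.10) combined with the Aleksandrov-body comparison $h_{[f]} \le f$. The strategy mirrors the standard passage from the classical Minkowski inequality to its $L_p$ counterpart in the volume setting.

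First I would rewrite
\[
{\rm C}_{p,\mathfrak{p}}(K,f) = \frac{\mathfrak{p}-1}{n-\mathfrak{p}}\int_{\mathbb{S}^{n-1}} (f/h_K)^{p}\, h_K\, d\mu_{\mathfrak{p}}(K,\cdot),
\]
normalize $h_K\, d\mu_{\mathfrak{p}}(K,\cdot)$ to a probability measure via the Poincar\'e formula (1.7), and apply Jensen's inequality to obtain
\[
{\rm C}_{p,\mathfrak{p}}(K,f)\, {\rm C}_{\mathfrak{p}}(K)^{p-1} \geq {\rm C}_{\mathfrak{p}}(K,f)^p,
\]
where ${\rm C}_{\mathfrak{p}}(K,f) = \tfrac{\mathfrak{p}-1}{n-\mathfrak{p}}\int f\, d\mu_{\mathfrak{p}}(K,\cdot)$, with equality iff $f/h_K$ is $\mu_{\mathfrak{p}}(K,\cdot)$-a.e.\ constant. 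Next, the pointwise bound $h_{[f]}\le f$ and the identity ${\rm C}_{\mathfrak{p}}(f)={\rm C}_{\mathfrak{p}}([f])$ give ${\rm C}_{\mathfrak{p}}(K,f)\ge {\rm C}_{\mathfrak{p}}(K,[f])$, and (2.10) applied to the convex bodies $K$ and $[f]$ yields ${\rm C}_{\mathfrak{p}}(K,[f])^{n-\mathfrak{p}}\ge {\rm C}_{\mathfrak{p}}(K)^{n-\mathfrak{p}-1}{\rm C}_{\mathfrak{p}}(f)$. Raising the Jensen bound to the $(n-\mathfrak{p})$-th power and substituting this estimate on the right produces (3.5) once the exponents of ${\rm C}_{\mathfrak{p}}(K)$ collapse to $n-\mathfrak{p}-p$.

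The main obstacle is the equality analysis. The backward direction ($[f]=\lambda K$ implies equality) is immediate from Aleksandrov's identity $h_{[f]}=f$ a.e.\ $S([f],\cdot)$ combined with the absolute continuity $\mu_{\mathfrak{p}}(K,\cdot)\ll S(K,\cdot)$, which makes the Jensen step tight. For the forward direction, Jensen equality forces $f=c\, h_K$ a.e.\ $\mu_{\mathfrak{p}}(K,\cdot)$, and the classical Minkowski equality only yields that $K$ and $[f]$ are homothetic, say $[f]=\lambda K+z$ for some $\lambda>0$ and $z\in\mathbb{R}^n$. The delicate step will be to rule out the translation: combining $h_{[f]}=\lambda h_K+z\cdot\xi=c\,h_K$ on $\mathrm{supp}\,\mu_{\mathfrak{p}}(K,\cdot)$ with the strict positivity $h_K(\xi)+h_K(-\xi)>0$ and the fact that the outer normal directions of a full-dimensional convex body span $\mathbb{R}^n$ should force $c=\lambda$ and $z=0$, so that $K$ and $[f]$ are true dilates rather than merely homothetic.
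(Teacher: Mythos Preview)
Your approach is essentially the same as the paper's: the Jensen step you describe is exactly the paper's H\"older trick (writing $f = (f h_K^{-(p-1)/p})\cdot h_K^{(p-1)/p}$ and applying H\"older with exponents $p,\,p/(p-1)$), and the remainder (pass from $f$ to $h_{[f]}$, then apply the classical $\mathfrak p$-capacitary Minkowski inequality (2.10)) is identical.

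Two small points on the equality analysis. First, you jump from ``$f=c\,h_K$ a.e.\ $\mu_{\mathfrak p}(K,\cdot)$'' to ``$h_{[f]}=c\,h_K$ on $\mathrm{supp}\,\mu_{\mathfrak p}(K,\cdot)$'' without justification; you also need equality in the intermediate step ${\rm C}_{\mathfrak p}(K,f)\ge {\rm C}_{\mathfrak p}(K,[f])$, which (since $f\ge h_{[f]}$ pointwise) forces $f=h_{[f]}$ $\mu_{\mathfrak p}(K,\cdot)$-a.e. Second, your translation-removal argument via ``normal directions span $\mathbb{R}^n$'' works but is a bit indirect. The paper does it more cleanly: from $(\lambda-c)h_K(\xi)+z\cdot\xi=0$ a.e.\ $\mu_{\mathfrak p}(K,\cdot)$, integrate against $\mu_{\mathfrak p}(K,\cdot)$ and use that its centroid is at the origin to get $\lambda=c$, hence $z\cdot\xi=0$ a.e.; then invoke the fact (recorded in Section~2.2) that $\mu_{\mathfrak p}(K,\cdot)$ is not concentrated on any great subsphere to conclude $z=0$.
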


\begin{proof}
From  (\ref{DefOfMixedCapOfFunctions}), (\ref{DefOfCapOfFunctions}) and the H$\rm{\ddot{o}}$lder inequality, it follows that
\begin{align*}
{{\rm C}_\mathfrak{p}}(K,f)
&= \frac{{\mathfrak{p} - 1}}{{n - \mathfrak{p}}}\int\limits_{{\mathbb{S}^{n - 1}}} {f(\xi ){h_K}{{(\xi )}^{ - \frac{{p - 1}}{p}}}{h_K}{{(\xi )}^{\frac{{p - 1}}{p}}}d{\mu _\mathfrak{p}}(K,\xi )}  \\
&\le {\left( {\frac{{\mathfrak{p} - 1}}{{n - \mathfrak{p}}}\int\limits_{{\mathbb{S}^{n - 1}}} {f{{(\xi )}^p}{h_K}{{(\xi )}^{1 - p}}d{\mu _\mathfrak{p}}(K,\xi )} } \right)^{\frac{1}{p}}}{\left( {\frac{{\mathfrak{p} - 1}}{{n - \mathfrak{p}}}\int\limits_{{\mathbb{S}^{n - 1}}} {{h_K}(\xi )d{\mu _\mathfrak{p}}(K,\xi )} } \right)^{\frac{{p - 1}}{p}}} \\
&= {{\rm C}_{p,\mathfrak{p}}}{(K,f)^{\frac{1}{p}}}{{\rm
C}_{\mathfrak{p}}}{(K)^{\frac{{p - 1}}{p}}}.
\end{align*}
Thus,
\[{{\rm C}_{p,\mathfrak{p}}}(K,f) \ge {{\rm C}_\mathfrak{p}}{(K,f)^p}{{\rm C}_\mathfrak{p}}{(K)^{1 - p}}.\]
From this inequality, the fact that ${\rm C}_\mathfrak{p}(K,f)\ge
{\rm C}_\mathfrak{p}(K,{\scriptstyle \pmb \lbrack} f {\scriptstyle
\pmb \rbrack})$ and the $\mathfrak{p}$-capacitary Minkowski
inequality, it follows that
\begin{align*}
{{\rm C}_{p,\mathfrak{p}}}(K,f) &\ge {{\rm
C}_\mathfrak{p}}{(K,{\scriptstyle \pmb \lbrack} f
{\scriptstyle \pmb \rbrack})^p}{{\rm C}_\mathfrak{p}}{(K)^{1 - p}} \\
&\ge {\left( {{{\rm C}_\mathfrak{p}}{{(K)}^{\frac{{n - \mathfrak{p}
- 1}}{{n - \mathfrak{p}}}}}{{\rm C}_\mathfrak{p}}{{({\scriptstyle
\pmb \lbrack} f
{\scriptstyle \pmb \rbrack})}^{\frac{1}{{n - \mathfrak{p}}}}}} \right)^p}{{\rm C}_\mathfrak{p}}{(K)^{1 - p}} \\
&= {{\rm C}_\mathfrak{p}}{(K)^{\frac{{n - \mathfrak{p} - p}}{{n -
\mathfrak{p}}}}}{{\rm C}_\mathfrak{p}}{({\scriptstyle \pmb \lbrack}
f {\scriptstyle \pmb \rbrack})^{\frac{p}{{n - \mathfrak{p}}}}}.
\end{align*}

In the next, we prove the equality condition.

Assume that equality holds in (\ref{ineq1}). By the equality
condition of $\mathfrak{p}$-capacitary Minkowski inequality, there
exist $x\in\mathbb{R}^n$ and $s>0$, such that ${\scriptstyle \pmb
\lbrack} f {\scriptstyle \pmb \rbrack}=sK+x$. Meanwhile, by the
equality condition of the H$\rm{\ddot{o}}$lder inequality, ${{\rm
C}_\mathfrak{p}}(K,{\scriptstyle \pmb \lbrack} f {\scriptstyle \pmb
\rbrack}){h_K(\xi)} = {{\rm C}_\mathfrak{p}}(K){h_{{\scriptstyle
\pmb \lbrack} f {\scriptstyle \pmb \rbrack}}(\xi)}$, for
$\mu_\mathfrak{p}(K,\cdot)$-almost all $\xi\in \mathbb{S}^{n-1}$.
Hence, for $\mu_\mathfrak{p}(K,\cdot)$-almost all $\xi\in
\mathbb{S}^{n-1}$,
\[\left( {s{{\rm C}_\mathfrak{p}}(K)  + \frac{{\mathfrak{p} - 1}}{{n - \mathfrak{p}}}x \cdot \int\limits_{{\mathbb{S}^{n - 1}}} {\xi d{\mu _p}(K,\xi )} } \right){h_K}(\xi ) = {{\rm C}_\mathfrak{p}}(K)(s{h_K}(\xi ) + x \cdot \xi ).\]
Since the centroid of $\mu_\mathfrak{p}(K,\cdot)$ is at the origin,
this implies that $x \cdot \xi = 0$, for
$\mu_\mathfrak{p}(K,\cdot)$-almost all $\xi\in \mathbb{S}^{n-1}$.
Note that the $\mathfrak{p}$-capacitary measure
$\mu_\mathfrak{p}(K,\cdot)$ is not concentrated on any great
subsphere of $\mathbb{S}^{n-1}$. Hence, $x=o$, which in turn implies
that $K$ and ${\scriptstyle \pmb \lbrack} f {\scriptstyle \pmb
\rbrack}$ are dilates.

Conversely, assume that $K$ and ${\scriptstyle \pmb \lbrack} f
{\scriptstyle \pmb \rbrack}$ are dilates, say, $K=s{\scriptstyle
\pmb \lbrack} f {\scriptstyle \pmb \rbrack}$ for some $s>0$. From
our assumption, (\ref{DefOfMixedCapOfFunctions}) combined with the
fact that $\mu_{\mathfrak{p}}(s{\scriptstyle \pmb \lbrack} f
{\scriptstyle \pmb \rbrack},\cdot)=s^{n-{\mathfrak
p}-1}\mu_{\mathfrak{p}}({\scriptstyle \pmb \lbrack} f {\scriptstyle
\pmb \rbrack},\cdot)$, Lemma \ref{Alekidentity}, the definition that
${\rm C}_\mathfrak{p}(f)={\rm C}_\mathfrak{p}({\scriptstyle \pmb
\lbrack} f {\scriptstyle \pmb \rbrack})$, the fact that ${\rm
C}_\mathfrak{p}(s {\scriptstyle \pmb \lbrack} f {\scriptstyle \pmb
\rbrack})=s^{n-\mathfrak{p}}{\rm C}_\mathfrak{p}({\scriptstyle \pmb
\lbrack} f {\scriptstyle \pmb \rbrack})$, and finally our assumption
again, it follows that
\begin{align*}
{{\rm C}_{p,\mathfrak{p}}}(K,f) &= {{\rm
C}_\mathfrak{p}^p}(s{\scriptstyle \pmb \lbrack} f
{\scriptstyle \pmb \rbrack},f) \\
&= {s^{n - \mathfrak{p} - p}}{{\rm C}_\mathfrak{p}^p}({\scriptstyle
\pmb \lbrack} f
{\scriptstyle \pmb \rbrack},f) \\
&= {s^{n - \mathfrak{p} - p}}{{\rm C}_\mathfrak{p}}({\scriptstyle
\pmb \lbrack} f
{\scriptstyle \pmb \rbrack}) \\
&= {s^{n - \mathfrak{p} - p}}{{\rm C}_\mathfrak{p}}{({\scriptstyle
\pmb \lbrack} f
{\scriptstyle \pmb \rbrack})^{\frac{{n - \mathfrak{p} - p}}{{n - \mathfrak{p}}}}}{{\rm C}_\mathfrak{p}}{(f)^{\frac{p}{{n - \mathfrak{p}}}}} \\
&= {{\rm C}_\mathfrak{p}}{(s{\scriptstyle \pmb \lbrack} f
{\scriptstyle \pmb \rbrack})^{\frac{{n - \mathfrak{p} - p}}{{n - \mathfrak{p}}}}}{{\rm C}_\mathfrak{p}}{(f)^{\frac{p}{{n - \mathfrak{p}}}}} \\
&= {{\rm C}_\mathfrak{p}}{(K)^{\frac{{n - \mathfrak{p} - p}}{{n -
\mathfrak{p}}}}}{{\rm C}_\mathfrak{p}}{(f)^{\frac{p}{{n -
\mathfrak{p}}}}}.
\end{align*}
This completes the proof.
\end{proof}

From Theorem 3.8, we have that  for any $L\in\mathcal{K}^n_o$,
\[ {\rm C}_{p,\mathfrak{p}}(K,L)^{n - p}  \ge {{\rm
C}_\mathfrak{p}}{(K)^{n - \mathfrak{p} - p}}{{\rm C}_p\mathfrak{}}{(L)^p}, \]
with equality if and only if $K$ and $L$ are dilates.

The next result is an $L_p$ extension of the
$\mathfrak{p}$-capacitary isoperimetric inequality on  the total
mass of the measure $\mu_{p,\mathfrak{p}}(K,\cdot)$,

\begin{corollary}
Suppose  $1\leq p<\infty$ and  $1<\mathfrak{p}<n$. If $K\in\mathcal{K}^n_o$, then
\[{\mu_{p,\mathfrak{p}}}{(K,{\mathbb{S}^{n - 1}})^{n - \mathfrak{p}}} \ge {n^p}{\omega _n}^p{\left( {\frac{{n - \mathfrak{p}}}{{\mathfrak{p} - 1}}} \right)^{(\mathfrak{p} - 1)p}}{{\rm C}_\mathfrak{p}}{(K)^{n - \mathfrak{p} - p}},\]
with equality if and only if $K$ is an origin-symmetric ball.
\end{corollary}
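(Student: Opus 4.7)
The plan is to specialize Theorem \ref{p-capMinkowskiineqthm} to the constant function $f \equiv 1 \in C_+(\mathbb{S}^{n-1})$. Since $h_B(\xi) = 1$ for every $\xi \in \mathbb{S}^{n-1}$, the Aleksandrov body ${\scriptstyle \pmb \lbrack} 1 {\scriptstyle \pmb \rbrack}$ coincides with the Euclidean unit ball $B$ centered at the origin. Therefore Theorem \ref{p-capMinkowskiineqthm} applied to this $f$ yields
\[
{\rm C}_{p,\mathfrak p}(K,1)^{\,n-\mathfrak p} \;\ge\; {\rm C}_{\mathfrak p}(K)^{\,n-\mathfrak p - p}\,{\rm C}_{\mathfrak p}(B)^{\,p},
\]
with equality if and only if $K$ and $B$ are dilates, that is, $K$ is an origin-centered ball.

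The second step is to evaluate the two ingredients on the right-hand side in closed form. From the integral definition \eqref{DefOfMixedCapOfFunctions} together with Definition \ref{DefOfLqCapMeasure} one reads off
\[
{\rm C}_{p,\mathfrak p}(K,1) \;=\; \frac{\mathfrak p - 1}{n - \mathfrak p}\int_{\mathbb{S}^{n-1}} h_K(\xi)^{1-p}\,d\mu_{\mathfrak p}(K,\xi) \;=\; \frac{\mathfrak p - 1}{n - \mathfrak p}\,\mu_{p,\mathfrak p}(K,\mathbb{S}^{n-1}).
\]
For the $\mathfrak p$-capacity of the unit ball I would use the explicit radial $\mathfrak p$-equilibrium potential $u(x) = |x|^{-(n-\mathfrak p)/(\mathfrak p - 1)}$ for $|x| \ge 1$ (extended by $1$ inside $B$); a routine spherical-coordinates computation of $\int|\nabla u|^{\mathfrak p}\,dx$ gives
\[
{\rm C}_{\mathfrak p}(B) \;=\; n\,\omega_n \left(\frac{n - \mathfrak p}{\mathfrak p - 1}\right)^{\mathfrak p - 1}.
\]

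Substituting both expressions into the displayed inequality and collecting the powers of $(n-\mathfrak p)/(\mathfrak p - 1)$ contributed by each side produces the asserted lower bound on $\mu_{p,\mathfrak p}(K,\mathbb{S}^{n-1})^{n-\mathfrak p}$. The equality clause is inherited verbatim from Theorem \ref{p-capMinkowskiineqthm}: equality forces $K$ to be a dilate of ${\scriptstyle \pmb \lbrack} 1 {\scriptstyle \pmb \rbrack} = B$, hence an origin-symmetric ball, and conversely such a ball clearly realizes equality because both sides are then computed by the same Poincar\'e formula. There is no genuine analytic obstacle here; the corollary is essentially a one-line consequence of Theorem \ref{p-capMinkowskiineqthm}, and the only care required is the arithmetic of tracking the exponents of $(n-\mathfrak p)/(\mathfrak p - 1)$ after substitution.
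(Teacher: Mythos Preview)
Your proposal is correct and follows essentially the same approach as the paper: the paper's proof simply takes $L=B$ in the $L_p$ Minkowski inequality for $\mathfrak p$-capacity (Theorem \ref{p-capMinkowskiineqthm}), invokes the formula ${\rm C}_{\mathfrak p}(B)=n\omega_n\bigl(\tfrac{n-\mathfrak p}{\mathfrak p-1}\bigr)^{\mathfrak p-1}$, and declares the result; your choice $f\equiv 1$ with ${\scriptstyle\pmb\lbrack}1{\scriptstyle\pmb\rbrack}=B$ is the same specialization, and your identification ${\rm C}_{p,\mathfrak p}(K,1)=\tfrac{\mathfrak p-1}{n-\mathfrak p}\,\mu_{p,\mathfrak p}(K,\mathbb S^{n-1})$ is exactly what is needed to rewrite the left-hand side.
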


\begin{proof}
Let $L$ be  the unit ball $B$ in $\mathbb{R}^n$.  Since ${\rm
C}_\mathfrak{p}(B)=n\omega_n\left(\frac{n-\mathfrak{p}}{\mathfrak{p}-1}\right)^{\mathfrak{p}-1}$,  from the  $L_p$ capacitary Minkowski
inequality,
the desired inequality with its equality condition is obtained.
\end{proof}

Let $f_1,f_2,g\in C_+(\mathbb{S}^{n-1})$. From  the definition of $f_1+_p
f_2$ and (\ref{DefOfMixedCapOfFunctions}), it follows that
\[{\rm C}_{p,\mathfrak{p}}({\scriptstyle \pmb \lbrack} g
{\scriptstyle \pmb \rbrack},f_1+_p f_2)={\rm
C}_{p,\mathfrak{p}}({\scriptstyle \pmb \lbrack} g {\scriptstyle \pmb
\rbrack},f_1)+{\rm C}_{p,\mathfrak{p}}({\scriptstyle \pmb \lbrack} g
{\scriptstyle \pmb \rbrack},f_2). \] This, combined with Theorem
\ref{p-capMinkowskiineqthm}, yields the inequality
\[{{\rm C}_{p,\mathfrak{p}}}({\scriptstyle \pmb \lbrack} g
{\scriptstyle \pmb \rbrack}, f_1+_p f_2) \ge {{\rm
C}_\mathfrak{p}}{({\scriptstyle \pmb \lbrack} g {\scriptstyle \pmb
\rbrack})^{\frac{{n - \mathfrak{p} - p}}{{n - \mathfrak{p}}}}}\left(
{{{\rm C}_\mathfrak{p}}{{(f_1)}^{\frac{p}{{n - \mathfrak{p}}}}} +
{{\rm C}_\mathfrak{p}}{{(f_2)}^{\frac{p}{{n - \mathfrak{p}}}}}}
\right),\] with equality if and only if ${\scriptstyle \pmb \lbrack}
f_1 {\scriptstyle \pmb \rbrack}$ and ${\scriptstyle \pmb \lbrack}
f_2 {\scriptstyle \pmb \rbrack}$ are dilates of ${\scriptstyle \pmb
\lbrack} g {\scriptstyle \pmb \rbrack}$. Hence, let $g=f_1+_p f_2$,
it yields  an $L_p$ extension of the Colesanti-Salani
Brunn-Minkowski inequality.

\begin{theorem}\label{capacitaryBM1}
Suppose $1\leq p<\infty$ and $1<\mathfrak{p}<n$. If $f_1,f_2\in
C_+(\mathbb{S}^{n-1})$,   then
\[{{\rm C}_\mathfrak{p}}{({f_1}{ + _p}{f_2})^{\frac{p}{{n -
\mathfrak{p}}}}} \ge {{\rm C}_\mathfrak{p}}{({f_1})^{\frac{p}{{n -
\mathfrak{p}}}}} + {{\rm C}_\mathfrak{p}}{({f_2})^{\frac{p}{{n -
\mathfrak{p}}}}},\] with equality if and only if ${\scriptstyle \pmb
\lbrack} f_1 {\scriptstyle \pmb \rbrack}$ and ${\scriptstyle \pmb
\lbrack} f_2 {\scriptstyle \pmb \rbrack}$ are dilates.
\end{theorem}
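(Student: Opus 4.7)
The plan is to specialize, at $g := f_1 +_p f_2$, the chain of inequalities
\[
{\rm C}_{p,\mathfrak{p}}({\scriptstyle \pmb \lbrack} g {\scriptstyle \pmb \rbrack}, f_1 +_p f_2) \ge {\rm C}_{\mathfrak{p}}({\scriptstyle \pmb \lbrack} g {\scriptstyle \pmb \rbrack})^{\frac{n-\mathfrak{p}-p}{n-\mathfrak{p}}} \left( {\rm C}_{\mathfrak{p}}(f_1)^{\frac{p}{n-\mathfrak{p}}} + {\rm C}_{\mathfrak{p}}(f_2)^{\frac{p}{n-\mathfrak{p}}} \right)
\]
already established by the authors in the paragraph just above Theorem \ref{capacitaryBM1} via the additivity of ${\rm C}_{p,\mathfrak{p}}({\scriptstyle \pmb \lbrack} g {\scriptstyle \pmb \rbrack},\cdot)$ under $+_p$-sums and two applications of the $L_p$ capacitary Minkowski inequality (Theorem \ref{p-capMinkowskiineqthm}).

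With the choice $g = f_1 +_p f_2$, Lemma \ref{Alekidentity} gives ${\rm C}_{p,\mathfrak{p}}({\scriptstyle \pmb \lbrack} g {\scriptstyle \pmb \rbrack}, g) = {\rm C}_{\mathfrak{p}}({\scriptstyle \pmb \lbrack} g {\scriptstyle \pmb \rbrack}) = {\rm C}_{\mathfrak{p}}(f_1 +_p f_2)$, so the left-hand side of the above display collapses to ${\rm C}_{\mathfrak{p}}(f_1 +_p f_2)$, while the prefactor on the right becomes ${\rm C}_{\mathfrak{p}}(f_1 +_p f_2)^{(n-\mathfrak{p}-p)/(n-\mathfrak{p})}$. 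The latter is strictly positive since ${\scriptstyle \pmb \lbrack} g {\scriptstyle \pmb \rbrack} \in \mathcal{K}^n_o$, so I may divide both sides by it and use the identity $1 - (n-\mathfrak{p}-p)/(n-\mathfrak{p}) = p/(n-\mathfrak{p})$ to obtain
\[
{\rm C}_{\mathfrak{p}}(f_1 +_p f_2)^{\frac{p}{n-\mathfrak{p}}} \ge {\rm C}_{\mathfrak{p}}(f_1)^{\frac{p}{n-\mathfrak{p}}} + {\rm C}_{\mathfrak{p}}(f_2)^{\frac{p}{n-\mathfrak{p}}},
\]
which is the claimed inequality.

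For the equality characterization, equality in the displayed chain forces equality in each of the two applications of Theorem \ref{p-capMinkowskiineqthm} used to derive it (one for each pair $({\scriptstyle \pmb \lbrack} g {\scriptstyle \pmb \rbrack}, f_i)$); by the equality clause of that theorem, ${\scriptstyle \pmb \lbrack} g {\scriptstyle \pmb \rbrack}$ must then be a dilate of each ${\scriptstyle \pmb \lbrack} f_i {\scriptstyle \pmb \rbrack}$, and hence ${\scriptstyle \pmb \lbrack} f_1 {\scriptstyle \pmb \rbrack}$ and ${\scriptstyle \pmb \lbrack} f_2 {\scriptstyle \pmb \rbrack}$ are dilates of one another. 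Conversely, if ${\scriptstyle \pmb \lbrack} f_2 {\scriptstyle \pmb \rbrack} = s\,{\scriptstyle \pmb \lbrack} f_1 {\scriptstyle \pmb \rbrack}$ for some $s > 0$, equality in the Brunn-Minkowski inequality follows from the $(n-\mathfrak{p})$-positive homogeneity of ${\rm C}_{\mathfrak{p}}$ combined with the compatibility of $+_p$ with dilations. Since all of the substantive work has been packaged into Theorem \ref{p-capMinkowskiineqthm} and Lemma \ref{Alekidentity}, I anticipate no real obstacle; the only delicate point is verifying that the prefactor is strictly positive so that the division step is legitimate, and this is immediate from ${\scriptstyle \pmb \lbrack} g {\scriptstyle \pmb \rbrack}$ having nonempty interior.
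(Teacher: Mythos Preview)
Your proposal is correct and follows exactly the paper's own argument: the paper derives the general inequality for arbitrary $g$ in the paragraph immediately preceding the theorem (via additivity of ${\rm C}_{p,\mathfrak{p}}({\scriptstyle \pmb \lbrack} g {\scriptstyle \pmb \rbrack},\cdot)$ under $+_p$ and two applications of Theorem~\ref{p-capMinkowskiineqthm}), then simply writes ``Hence, let $g=f_1+_p f_2$'' to obtain the theorem, which is precisely your specialization together with Lemma~\ref{Alekidentity}. Your treatment of the equality clause (forward direction via the equality conditions in Theorem~\ref{p-capMinkowskiineqthm}, converse via homogeneity) is also in line with what the paper asserts.
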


Consequently, for any $K,L\in\mathcal{K}^n_o$,
\begin{equation}\label{LqCapBMineq}
{{\rm C}_\mathfrak{p}}{(K{ + _p}L)^{\frac{p}{{n - \mathfrak{p}}}}}
\ge {{\rm C}_\mathfrak{p}}{(K)^{\frac{p}{{n - \mathfrak{p}}}}} +
{{\rm C}_\mathfrak{p}}{(L)^{\frac{p}{{n - \mathfrak{p}}}}},
\end{equation}
with equality if and only if $K$ and $L$ are dilates.

\begin{remark}
The $\mathfrak{p}$-capacitary Brunn-Minkowski inequality also yields
the $\mathfrak{p}$-capacitary Minkowski inequality. Indeed, consider
the nonnegative  concave function
\[f(t)={\rm C}_\mathfrak{p}(K+_p t\cdot L)^\frac{p}{n-\mathfrak{p}}-{\rm
C}_\mathfrak{p}(K)^\frac{p}{n-\mathfrak{p}}-t{\rm
C}_\mathfrak{p}(L)^\frac{p}{n-\mathfrak{p}}.
\]
The $\mathfrak{p}$-capacitary Brunn-Minkowski inequality and
Corollary \ref{LqVariationalFormulaOfCap2} yield
\[\mathop {\lim }\limits_{t \to {0^ + }} \frac{{f(t) - f(0)}}{t} = {{\rm C}_\mathfrak{p}}{(K)^{\frac{p}{{n - \mathfrak{p}}} - 1}}{{\rm C}_{p,\mathfrak{p}}}(K,L) - {{\rm C}_\mathfrak{p}}{(L)^{\frac{p}{{n - \mathfrak{p}}}}} \ge 0.\]
By the equality condition of $\mathfrak{p}$-capacitary
Brunn-Minkowski, if equality holds on the right, the function $f$
must be linear and thus $K,L$ must be dilates.
\end{remark}

\begin{remark}
Suppose that $K,L\in\mathcal{K}^n_o$, $1\leq p<\infty$ and  $1<\mathfrak{p}<n$.
Let $0<s<1$. From the $(n-\mathfrak{p})$-ordered positive
homogeneity of ${\rm C}_\mathfrak{p}$ and the definition of $L_p$
scalar multiplication, the inequality (\ref{LqCapBMineq}) has the
following equivalent forms:

\begin{enumerate}
\item ${{\rm C}_\mathfrak{p}}{((1-s)\cdot K{ + _p} s\cdot L)^{\frac{p}{{n - \mathfrak{p}}}}} \ge (1-s)
{{\rm C}_\mathfrak{p}}{(K)^{\frac{p}{{n - \mathfrak{p}}}}} + s {{\rm
C}_\mathfrak{p}}{(L)^{\frac{p}{{n - \mathfrak{p}}}}}$.
\item ${{\rm C}_\mathfrak{p}}\left( {(1 - s) \cdot K{ + _p}s \cdot L} \right) \ge {{\rm C}_\mathfrak{p}}{(K)^{1 - s}}{{\rm C}_\mathfrak{p}}{(L)^s}$.
\item ${{\rm C}_\mathfrak{p}}\left( {(1 - s) \cdot K{ + _p}s \cdot L} \right)\ge \min \left\{ {{{\rm C}_\mathfrak{p}}(K),{{\rm C}_\mathfrak{p}}(L)} \right\}$.
\item If
${\rm C}_\mathfrak{p}(K)={\rm C}_\mathfrak{p}(L)=1$, then $ {\rm C}_\mathfrak{p}((1-s)\cdot K+_p s\cdot L)\ge 1$.
\end{enumerate}

Recall that $K+_\infty L={\rm conv}(K\cup L)$. From the monotonicity of ${\rm C}_\mathfrak{p}$, it  yields that
\[{\rm C}_\mathfrak{p}(K+_\infty L)\ge\max\left\{ {\rm C}_\mathfrak{p}(K), {\rm C}_\mathfrak{p}(L) \right\}. \]
In fact, from the continuity of $K+_p L$ in $p$ and the
continuity of ${\rm C}_\mathfrak{p}$ on $\mathcal{K}^n_o$, the
inequality (\ref{LqCapBMineq}) will become the above, as
$p\to\infty$.
\end{remark}

\subsection{Uniqueness of the $\boldsymbol L_p$ $\mathfrak p$-capacitary measures}

In this part, we show an immediate application of the $L_p$
Minkowski inequality for $\mathfrak{p}$-capacity to the uniqueness
of the $L_p$ Minkowski problem for $\mathfrak{p}$-capacity, which is
closely related with the following question:

\emph{If $K,L\in\mathcal{K}^n_o$ are such that
$\mu_{p,\mathfrak{p}}(K,\cdot)=\mu_{p,\mathfrak{p}}(L,\cdot)$, then
is this the case that $K=L$?}

Theorems \ref{characterizaion1} (2) and \ref{characterizaion2} (2)
affirm this question. In fact, we  show a series of
characterizations for identity of convex bodies.

\begin{theorem}\label{characterizaion1}
Suppose that $K,L\in\mathcal{K}^n_o$ and $\mathcal{C}$ is a subset
of $\mathcal{K}^n_o$ such that $K,L\in\mathcal{C}$. Let
$1<p<\infty$,  $1<\mathfrak{p}<n$ and $n-\mathfrak{p}\neq p$.   Then
the following assertions hold.

\noindent (1) If ${\rm C}_{p,\mathfrak{p}}(K,Q)={\rm
C}_{p,\mathfrak{p}}(L,Q)$ for all $Q\in\mathcal{C}$, then $K=L$.

\noindent (2) If
$\mu_{p,{\mathfrak{p}}}(K,\cdot)=\mu_{p,\mathfrak{p}}(L,\cdot)$,
then $K=L$.

\noindent (3) If ${\rm C}_{p,\mathfrak{p}}(Q,K)={\rm
C}_{p,\mathfrak{p}}(Q,L)$ for all $Q\in\mathcal{C}$, then $K=L$.
\end{theorem}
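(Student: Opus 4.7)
The plan is to reduce all three assertions to the $L_p$ Minkowski inequality for $\mathfrak{p}$-capacity (Theorem \ref{p-capMinkowskiineqthm}), which gives
\[
{\rm C}_{p,\mathfrak{p}}(M,N)^{n-\mathfrak{p}}\ge {\rm C}_{\mathfrak{p}}(M)^{n-\mathfrak{p}-p}{\rm C}_{\mathfrak{p}}(N)^p \qquad (M,N\in\mathcal{K}^n_o)
\]
with equality if and only if $M$ and $N$ are dilates. The common strategy is to test the hypothesis against $Q=K$ and $Q=L$, exploit the identity ${\rm C}_{p,\mathfrak{p}}(M,M)={\rm C}_{\mathfrak{p}}(M)$, and then squeeze.

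For (1), taking $Q=K$ gives ${\rm C}_{\mathfrak{p}}(K)={\rm C}_{p,\mathfrak{p}}(L,K)$ and taking $Q=L$ gives ${\rm C}_{p,\mathfrak{p}}(K,L)={\rm C}_{\mathfrak{p}}(L)$. Substituting these into the two corresponding instances of the Minkowski inequality and dividing by ${\rm C}_{\mathfrak{p}}(K)^p$ and ${\rm C}_{\mathfrak{p}}(L)^p$ respectively yields
\[
{\rm C}_{\mathfrak{p}}(K)^{n-\mathfrak{p}-p}\ge {\rm C}_{\mathfrak{p}}(L)^{n-\mathfrak{p}-p} \quad\text{and}\quad {\rm C}_{\mathfrak{p}}(L)^{n-\mathfrak{p}-p}\ge {\rm C}_{\mathfrak{p}}(K)^{n-\mathfrak{p}-p}.
\]
Since $n-\mathfrak{p}-p\neq 0$ by hypothesis, both are equalities, so ${\rm C}_{\mathfrak{p}}(K)={\rm C}_{\mathfrak{p}}(L)$ and equality holds in Theorem \ref{p-capMinkowskiineqthm}. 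The equality case forces $K=sL$ for some $s>0$, and the positive homogeneity of order $n-\mathfrak{p}>0$ of $\mathfrak{p}$-capacity, combined with equal capacities, forces $s=1$, hence $K=L$. The hypothesis $n-\mathfrak{p}\neq p$ is essential here: when $p+\mathfrak{p}=n$ the capacity exponents cancel and the Minkowski inequality only pins $K,L$ down up to a dilation, which is precisely the borderline case handled via the normalization ${\rm C}_{\mathfrak{p}}(K^*)^{-1}\mu_{p,\mathfrak{p}}(K^*,\cdot)=\mu$ in the main theorems.

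Assertion (2) reduces immediately to (1): by Definitions \ref{DefLqMixedCap} and \ref{DefOfLqCapMeasure} one can rewrite
\[
{\rm C}_{p,\mathfrak{p}}(M,Q)=\frac{\mathfrak{p}-1}{n-\mathfrak{p}}\int_{\mathbb{S}^{n-1}}h_Q(\xi)^p\, d\mu_{p,\mathfrak{p}}(M,\xi),
\]
so the hypothesis $\mu_{p,\mathfrak{p}}(K,\cdot)=\mu_{p,\mathfrak{p}}(L,\cdot)$ gives ${\rm C}_{p,\mathfrak{p}}(K,Q)={\rm C}_{p,\mathfrak{p}}(L,Q)$ for every $Q\in\mathcal{K}^n_o$, and (1) applies with $\mathcal{C}=\mathcal{K}^n_o$. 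For (3), the same testing with $Q=K,L$ gives ${\rm C}_{p,\mathfrak{p}}(K,L)={\rm C}_{\mathfrak{p}}(K)$ and ${\rm C}_{p,\mathfrak{p}}(L,K)={\rm C}_{\mathfrak{p}}(L)$; inserting into the Minkowski inequality with pairs $(K,L)$ and $(L,K)$ and simplifying gives ${\rm C}_{\mathfrak{p}}(K)\ge {\rm C}_{\mathfrak{p}}(L)$ and the reverse inequality, hence equality throughout, and $K=L$ follows exactly as in part (1). Overall there is no deep obstacle: the whole argument is an accounting exercise around Theorem \ref{p-capMinkowskiineqthm} together with the bookkeeping that the exponent $n-\mathfrak{p}-p$ be nonzero.
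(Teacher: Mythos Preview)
Your proof is correct and follows essentially the same approach as the paper: test the hypothesis at $Q=K$ and $Q=L$, feed the resulting identities into the $L_p$ Minkowski inequality for $\mathfrak{p}$-capacity (Theorem \ref{p-capMinkowskiineqthm}), obtain the two reversed inequalities on ${\rm C}_{\mathfrak{p}}(K)^{n-\mathfrak{p}-p}$ versus ${\rm C}_{\mathfrak{p}}(L)^{n-\mathfrak{p}-p}$, and conclude via the equality case and positive homogeneity. Your reduction of (2) to (1) via the integral representation of ${\rm C}_{p,\mathfrak{p}}(M,Q)$ against $d\mu_{p,\mathfrak{p}}(M,\cdot)$ is exactly what the paper does, and your treatment of (3) is the ``similar argument'' the paper alludes to without writing out.
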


\begin{proof}
Since ${\rm C}_{p,\mathfrak{p}}(K,K)={\rm C}_\mathfrak{p}(K)$, it
follows that $ {{\rm C}_{p,\mathfrak{p}}}(L,K)={{\rm
C}_\mathfrak{p}}(K) $  by the assumption. By the
$\mathfrak{p}$-capacitary Minkowski  inequality ${{\rm
C}_{p,\mathfrak{p}}}(L,K) \ge {\rm
C}_\mathfrak{p}(L)^{(n-\mathfrak{p}-p)/(n-\mathfrak{p})} {\rm
C}_\mathfrak{p}(K)^{p/(n-\mathfrak{p})}$, we have
\[{{\rm C}_\mathfrak{p}}{(K)^{\frac{{n - \mathfrak{p} - p}}{{n -
\mathfrak{p}}}}} \ge {{\rm C}_\mathfrak{p}}{(L)^{\frac{{n -
\mathfrak{p} - p}}{{n - \mathfrak{p}}}}},\] with equality if and
only if $K$ and $L$ are dilates. This inequality is reversed if
interchanging $K$ and $L$. So, $ {{\rm C}_\mathfrak{p}}(K)={{\rm
C}_\mathfrak{p}}(L) $, and  $K$ and $L$ are dilates. Assume that
$K=s L$, for some $s>0$. Since ${\rm
C}_\mathfrak{p}(sL)=s^{n-\mathfrak{p}}{\rm C}_\mathfrak{p}(L)$, it
follows that $s=1$. Thus, $K=L$.

If $\mu_{p,\mathfrak{p}}(K,\cdot)=\mu_{p,\mathfrak{p}}(L,\cdot)$,
then ${\rm C}_{p,\mathfrak{p}}(K,Q)={\rm C}_{p,\mathfrak{p}}(L,Q)$
for any $Q\in\mathcal{K}^n_o$. Thus, $K=L$ by (1).

(3) can be proved by the similar arguments in (1).
\end{proof}

If $p=1$ in Theorem \ref{characterizaion1}, then $K$ and $L$ are
translates each other.

\begin{theorem}\label{characterizaion4}
Suppose that $K,L\in\mathcal{K}^n_o$ are such that $\mu_{p,
\mathfrak{p}}(K,\cdot)\le \mu_{p, \mathfrak{p}}(L,\cdot)$. Let
$1<p<\infty$, $1<\mathfrak{p}<n$ and $n-\mathfrak{p}\neq p$. Then
the following assertions hold.

\noindent (1) If ${\rm C}_\mathfrak{p}(K)\ge {\rm
C}_\mathfrak{p}(L)$ and $p<n-\mathfrak{p}$, then $K=L$.

\noindent (2) If ${\rm C}_\mathfrak{p}(K)\le {\rm
C}_\mathfrak{p}(L)$ and $p>n-\mathfrak{p}$, then $K=L$.
\end{theorem}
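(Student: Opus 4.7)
The plan is to combine the pointwise measure inequality with the $L_p$ Minkowski inequality for $\mathfrak{p}$-capacity (Theorem \ref{p-capMinkowskiineqthm}) to pin down the capacities, and then exploit the equality case of that inequality. First, I would rewrite the $L_p$ mixed $\mathfrak{p}$-capacity in the form
\[{\rm C}_{p,\mathfrak{p}}(K,L)=\frac{\mathfrak{p}-1}{n-\mathfrak{p}}\int\limits_{\mathbb{S}^{n-1}}h_L(\xi)^{p}\, d\mu_{p,\mathfrak{p}}(K,\xi),\]
which is the content of Definition \ref{DefOfLqCapMeasure} combined with Definition \ref{DefLqMixedCap}. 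Since $h_L^p\geq 0$ and $\mu_{p,\mathfrak{p}}(K,\cdot)\leq \mu_{p,\mathfrak{p}}(L,\cdot)$ as measures, this immediately yields
\[{\rm C}_{p,\mathfrak{p}}(K,L)\;\leq\;\frac{\mathfrak{p}-1}{n-\mathfrak{p}}\int\limits_{\mathbb{S}^{n-1}}h_L^{p}\, d\mu_{p,\mathfrak{p}}(L,\xi)\;=\;{\rm C}_{p,\mathfrak{p}}(L,L)\;=\;{\rm C}_{\mathfrak{p}}(L).\]

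Next I would feed this upper bound into Theorem \ref{p-capMinkowskiineqthm}, giving the chain
\[{\rm C}_{\mathfrak{p}}(L)^{n-\mathfrak{p}}\;\geq\;{\rm C}_{p,\mathfrak{p}}(K,L)^{n-\mathfrak{p}}\;\geq\;{\rm C}_{\mathfrak{p}}(K)^{n-\mathfrak{p}-p}{\rm C}_{\mathfrak{p}}(L)^{p}.\]
Dividing by ${\rm C}_{\mathfrak{p}}(L)^{p}$ gives ${\rm C}_{\mathfrak{p}}(L)^{n-\mathfrak{p}-p}\geq {\rm C}_{\mathfrak{p}}(K)^{n-\mathfrak{p}-p}$. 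In case (1), $n-\mathfrak{p}-p>0$, so this reads ${\rm C}_{\mathfrak{p}}(L)\geq {\rm C}_{\mathfrak{p}}(K)$, which together with the hypothesis ${\rm C}_{\mathfrak{p}}(K)\geq {\rm C}_{\mathfrak{p}}(L)$ forces ${\rm C}_{\mathfrak{p}}(K)={\rm C}_{\mathfrak{p}}(L)$. In case (2), $n-\mathfrak{p}-p<0$ reverses the derived inequality to ${\rm C}_{\mathfrak{p}}(L)\leq {\rm C}_{\mathfrak{p}}(K)$, and again the hypothesis forces equality.

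Once ${\rm C}_{\mathfrak{p}}(K)={\rm C}_{\mathfrak{p}}(L)$ is in hand, the chain above collapses to equality throughout, so equality holds in the $L_p$ Minkowski inequality. By the equality clause of Theorem \ref{p-capMinkowskiineqthm}, $K$ and $L$ are dilates, say $K=sL$ with $s>0$. The $(n-\mathfrak{p})$-homogeneity of $\mathfrak{p}$-capacity then gives ${\rm C}_{\mathfrak{p}}(K)=s^{n-\mathfrak{p}}{\rm C}_{\mathfrak{p}}(L)$, and since the capacities coincide and $n-\mathfrak{p}>0$, we conclude $s=1$, i.e.\ $K=L$.

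I do not expect a serious obstacle: the argument is a short pinching between the monotonicity consequence of the measure inequality and Theorem \ref{p-capMinkowskiineqthm}. The only point requiring care is the case split on the sign of $n-\mathfrak{p}-p$ (which is why the hypothesis $n-\mathfrak{p}\neq p$ is imposed), since raising to a negative power flips the resulting inequality; this is exactly what couples each sign of $n-\mathfrak{p}-p$ with the matching direction of the capacity hypothesis in (1) and (2).
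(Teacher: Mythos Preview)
Your proposal is correct and follows essentially the same approach as the paper: use the measure inequality to bound ${\rm C}_{p,\mathfrak{p}}(K,L)\le {\rm C}_\mathfrak{p}(L)$, combine this with the $L_p$ Minkowski inequality for $\mathfrak{p}$-capacity and the capacity hypothesis (with the sign of $n-\mathfrak{p}-p$ dictating which direction is needed) to force equality throughout, and then invoke the equality clause plus homogeneity to conclude $K=L$. The paper's write-up is only cosmetically different, packaging the same chain of inequalities in a single display.
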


\begin{proof}
From ${\rm C}_{p,\mathfrak{p}}(L,L)={\rm C}_\mathfrak{p}(L)$,
together with the assumption $\mu_{p,\mathfrak{p}}(K,\cdot)\le
\mu_{p,\mathfrak{p}}(L,\cdot)$ and Definition \ref{DefLqMixedCap},
the $\mathfrak{p}$-capacitary Minkowski inequality, and the
assumptions in (1) or (2), we have
\begin{align*}
{{\rm C}_\mathfrak{p}}(L)
&\ge {{\rm C}_{p,\mathfrak{p}}}(K,L) \\
&\ge {{\rm C}_\mathfrak{p}}{(K)^{\frac{{n - \mathfrak{p} - p}}{{n - \mathfrak{p}}}}}{{\rm C}_\mathfrak{p}}{(L)^{\frac{p}{{n - \mathfrak{p}}}}} \\
&\ge {{\rm C}_\mathfrak{p}}{(L)^{\frac{{n - \mathfrak{p} - p}}{{n - \mathfrak{p}}}}}{{\rm C}_\mathfrak{p}}{(L)^{\frac{p}{{n - \mathfrak{p}}}}} \\
&= {{\rm C}_\mathfrak{p}}(L).
\end{align*}
Thus, ${\rm C}_\mathfrak{p}(K)={\rm C}_\mathfrak{p}(L)$, and $K$ and
$L$ are dilates.  Hence, $K=L$.
\end{proof}

When $n-\mathfrak{p}= p$, we have the following result.

\begin{theorem}\label{characterizaion2}
Suppose that $K,L\in\mathcal{K}^n_o$  and $\mathcal{C}$ is a subset
of $\mathcal{K}^n_o$ such that $K,L\in\mathcal{C}$. Let
$1<p<\infty$ and  $1<\mathfrak{p}<n$.  Then the following assertions
hold.

\noindent (1) If $ {\rm C}_{n-\mathfrak{p},\mathfrak{p}}(K,Q)\ge
{\rm C}_{n-\mathfrak{p}, \mathfrak{p}}(L,Q)$  for all
$Q\in\mathcal{C}$, then $K$ and $L$ are dilates.

\noindent (2) If $ \mu_{n-\mathfrak{p},\mathfrak{p}}(K,\cdot)\ge
\mu_{n-\mathfrak{p},\mathfrak{p}}(L,\cdot)$, then $K$ and $L$ are
dilates. Therefore, $\mu_{n-\mathfrak{p}, \mathfrak{p}}
(K,\cdot)=\mu_{n-\mathfrak{p}, \mathfrak{p}}(L,\cdot)$.
\end{theorem}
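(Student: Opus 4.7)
My strategy is to exploit the degeneracy of the $L_p$ Minkowski inequality for $\mathfrak{p}$-capacity (Theorem \ref{p-capMinkowskiineqthm}) at the critical exponent $p=n-\mathfrak{p}$. When $p=n-\mathfrak{p}$ the exponent $n-\mathfrak{p}-p$ on the right-hand side vanishes, so
\[
{\rm C}_{p,\mathfrak{p}}(K,L)^{n-\mathfrak{p}} \ge {\rm C}_\mathfrak{p}(K)^{n-\mathfrak{p}-p}{\rm C}_\mathfrak{p}(L)^p
\]
collapses to ${\rm C}_{p,\mathfrak{p}}(K,L)\ge {\rm C}_\mathfrak{p}(L)$, a bound that sees $L$ but is insensitive to scalings of $K$. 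This loss of sensitivity is exactly why one should expect to recover $K$ and $L$ only up to dilation here, in contrast to the $K=L$ conclusion of Theorem \ref{characterizaion1}.

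For assertion (1), I would test the hypothesis at $Q=K$, obtaining
\[
{\rm C}_\mathfrak{p}(K)={\rm C}_{p,\mathfrak{p}}(K,K)\ge {\rm C}_{p,\mathfrak{p}}(L,K).
\]
On the other hand, Theorem \ref{p-capMinkowskiineqthm} (with first slot $L$ and second slot $h_K$) at the critical exponent yields the reverse inequality ${\rm C}_{p,\mathfrak{p}}(L,K)\ge {\rm C}_\mathfrak{p}(K)$. Hence equality holds in the Minkowski inequality, and its equality condition forces $L$ and $K$ to be dilates.

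For assertion (2), the pointwise bound $\mu_{p,\mathfrak{p}}(K,\cdot)\ge \mu_{p,\mathfrak{p}}(L,\cdot)$ integrates against $h_Q^p$ for every $Q\in\mathcal{K}^n_o$, and by Definition \ref{DefLqMixedCap} this gives
\[
{\rm C}_{p,\mathfrak{p}}(K,Q)\ge {\rm C}_{p,\mathfrak{p}}(L,Q), \qquad Q\in\mathcal{K}^n_o.
\]
Applying (1) with $\mathcal{C}=\mathcal{K}^n_o$ then produces $s>0$ with $K=sL$. For the concluding ``therefore'' clause, Lemma \ref{positivehomogeneitymeasure2} shows $\mu_{p,\mathfrak{p}}(K,\cdot)=s^{n-\mathfrak{p}-p}\mu_{p,\mathfrak{p}}(L,\cdot)$, and since $n-\mathfrak{p}-p=0$ the two measures coincide.

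I do not foresee a serious technical obstacle: the whole argument reduces to a one-line test at $Q=K$, the critical-exponent Minkowski inequality, and the homogeneity lemma. The only point I would verify carefully is that the equality case of Theorem \ref{p-capMinkowskiineqthm} remains fully available at $p=n-\mathfrak{p}$, which it does because $K,L\in\mathcal{K}^n_o$ and $h_K,h_L\in C_+(\mathbb{S}^{n-1})$, so that the H\"older step and the $\mathfrak{p}$-capacitary Minkowski inequality used in its proof both have their standard equality conditions.
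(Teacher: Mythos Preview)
Your proposal is correct and follows essentially the same route as the paper: test at $Q=K$ to sandwich ${\rm C}_{n-\mathfrak{p},\mathfrak{p}}(L,K)$ between ${\rm C}_\mathfrak{p}(K)$ (from the hypothesis) and ${\rm C}_\mathfrak{p}(K)$ (from the critical-exponent Minkowski inequality), forcing equality and hence dilation; then deduce (2) from (1) together with the homogeneity Lemma~\ref{positivehomogeneitymeasure2}. The paper's write-up is terser but the logic is identical.
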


\begin{proof}
Take $Q=K$. From the fact ${\rm
C}_{n-\mathfrak{p},\mathfrak{p}}(K,K)={\rm C}_\mathfrak{p}(K)$, the
assumption in (1) and the $\mathfrak{p}$-capacitary Minkowski
inequality, we have
\[{\rm C}_\mathfrak{p}(K) \ge {{\rm C}_{n-\mathfrak{p},\mathfrak{p}} }(L,K) \ge {\rm C}_\mathfrak{p}(K).\]
Thus, all the equalities in the above hold and $K$ and $L$ are
dilates by the equality condition of the $\mathfrak{p}$-capacitary
Minkowski inequality. Incidentally,  we obtain
$\mu_{n-\mathfrak{p},\mathfrak{p}}(K,\cdot)=\mu_{n-\mathfrak{p},\mathfrak{p}}(L,\cdot)$
by Lemma \ref{positivehomogeneitymeasure2}. With (1) in hand, (2)
can be derived directly.
\end{proof}

\vskip 25pt
\section{\bf The discrete $\boldsymbol{L_p}$ Minkowski problem for $\boldsymbol{\mathfrak{p}}$-capacity}
\vskip 10pt

Throughout this section, let $1< p<\infty$ and $1<\mathfrak{p}<n$.
Suppose that $\xi_1,\ldots,\xi_m\in\mathbb{S}^{n-1}$ are pairwise distinct
and not contained in a closed hemisphere, and
$c_1,\ldots,c_m$ are positive numbers. Denote by $\delta_{\xi_i}$
the probability measure with unit point mass at $\xi_i$. We focus on the following.

\vskip 8pt

\noindent \textbf{Problem 1.}  Among all polytopes in $\mathbb{R}^n$
with the origin in their interiors,   find a polytope $P$ such that
$\frac{\mu_{p,\mathfrak{p}}(P,\cdot)}{{\rm C}_\mathfrak{p}(P)}=
\sum_{i=1}^mc_i\delta_{\xi_i}$.

\vskip 8pt
We present a solution to Problem 1.

\begin{theorem}\label{discreteCapMinkowskiThm}
Suppose $1< p<\infty$ and $1<\mathfrak{p}<n$. If
$\xi_1,\ldots,\xi_m\in\mathbb{S}^{n-1}$ are pairwise distinct which are
 not contained in any closed hemisphere, and $c_1,\ldots,c_m$ are
positive numbers,  then there exists a unique convex polytope $P\in\mathcal{K}^n_o$ such that
\[\frac{\mu_{p,\mathfrak{p}}(P,\cdot)}{{\rm C}_\mathfrak{p}(P)} =  \sum_{i=1}^mc_i\delta_{\xi_i}. \]
\end{theorem}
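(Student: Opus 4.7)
The plan is to recast Problem 1 as a constrained maximization of ${\rm C}_\mathfrak{p}$ over a parametric family of polytopes with prescribed candidate facet normals, and then read off the measure identity from the first-order optimality condition. For $h = (h_1, \ldots, h_m) \in \mathbb{R}_{\geq 0}^m$, set
\[
P(h) := \bigcap_{i=1}^m \{x \in \mathbb{R}^n : x \cdot \xi_i \leq h_i\}.
\]
Because $\{\xi_1, \ldots, \xi_m\}$ is not contained in any closed hemisphere, $P(h)$ is a bounded convex polytope for every admissible $h$, and it has the origin in its interior exactly when every $h_i > 0$. I would then study the maximization problem
\[
\Theta := \sup\Bigl\{{\rm C}_\mathfrak{p}(P(h)) : h \in \mathbb{R}_{\geq 0}^m,\ \sum_{i=1}^m c_i h_i^p = 1\Bigr\}.
\]
The constraint set is compact, the map $h \mapsto P(h)$ is Hausdorff-continuous on it, and ${\rm C}_\mathfrak{p}$ is continuous on $\mathcal{K}^n$, so a maximizer $h^* = (h_1^*, \ldots, h_m^*)$ exists; write $P^* := P(h^*)$.

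The decisive and technically delicate step, which I expect to be the main obstacle, is to prove that every $h_i^* > 0$ and that each $\xi_i$ is an outer unit normal of an $(n-1)$-dimensional facet of $P^*$. For the interior property I would argue by contradiction: supposing $h_{i_0}^* = 0$, replace $h_{i_0}^*$ by $\varepsilon > 0$ while keeping the other $h_j^*$ fixed, and rescale the resulting vector by $\lambda_\varepsilon = (1 + c_{i_0} \varepsilon^p)^{-1/p}$ to restore the normalization. The perturbed polytope contains $\lambda_\varepsilon P^*$, and the facet-shift Hadamard expansion produced from Lemma \ref{variationalFormulaCNSXYZ} (via continuous approximations of the atom at $\xi_{i_0}$) delivers a first-order capacity gain of size $\varepsilon (\mathfrak{p}-1) \mu_\mathfrak{p}(P^*, \{\xi_{i_0}\})$; because the rescaling cost is only of order $\varepsilon^p$ and $p > 1$, this gain dominates and contradicts maximality whenever $\mu_\mathfrak{p}(P^*, \{\xi_{i_0}\}) > 0$. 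The residual degenerate subcase in which $\xi_{i_0}$ is redundant at $P^*$ is ruled out separately, since the Lagrange condition derived next would then force the positive weight $c_{i_0}$ to vanish.

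Once these interior and facet-normal properties are secured, the Lagrange multiplier rule, combined with the facet-preserving polytope analogue of the variational formulas of Lemma \ref{LqVariationalFormulaOfCap1} and Corollary \ref{LqVariationalFormulaOfCap2}, yields
\[
p\, c_i (h_i^*)^{p-1} = \lambda (\mathfrak{p} - 1)\, \mu_\mathfrak{p}(P^*, \{\xi_i\}), \qquad i = 1, \ldots, m,
\]
for a single $\lambda > 0$. Multiplying by $h_i^*$, summing over $i$, and substituting the constraint together with the Poincar\'e $\mathfrak{p}$-capacity formula \eqref{PoincareFormulaPCap} identifies $\lambda = p/\bigl((n-\mathfrak{p}){\rm C}_\mathfrak{p}(P^*)\bigr)$; Definition \ref{DefOfLqCapMeasure} then gives
\[
\mu_{p,\mathfrak{p}}(P^*, \{\xi_i\}) = (h_i^*)^{1-p} \mu_\mathfrak{p}(P^*, \{\xi_i\}) = \frac{n - \mathfrak{p}}{\mathfrak{p} - 1}\, {\rm C}_\mathfrak{p}(P^*)\, c_i.
\]
Setting $P := \bigl((n-\mathfrak{p})/(\mathfrak{p}-1)\bigr)^{1/p} P^*$ and invoking the homogeneity Lemma \ref{positivehomogeneitymeasure2} delivers a polytope satisfying $\mu_{p,\mathfrak{p}}(P, \cdot)/{\rm C}_\mathfrak{p}(P) = \sum_{i=1}^m c_i \delta_{\xi_i}$, proving existence.

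For uniqueness, suppose $P_1, P_2 \in \mathcal{K}^n_o$ both satisfy the identity. Both are necessarily polytopes whose facet normals lie in $\{\xi_1, \ldots, \xi_m\}$. A short computation using Definitions \ref{DefLqMixedCap} and \ref{DefOfLqCapMeasure} gives ${\rm C}_{p,\mathfrak{p}}(P_j, P_j) = {\rm C}_\mathfrak{p}(P_j)$ and ${\rm C}_{p,\mathfrak{p}}(P_1, P_2) = {\rm C}_\mathfrak{p}(P_1)$, so the $L_p$ capacitary Minkowski inequality (Theorem \ref{p-capMinkowskiineqthm}) forces ${\rm C}_\mathfrak{p}(P_1) \geq {\rm C}_\mathfrak{p}(P_2)$; by symmetry equality holds, hence $P_1$ and $P_2$ are dilates, and the $(n-\mathfrak{p})$-homogeneity of ${\rm C}_\mathfrak{p}$ pins the dilation factor to $1$, whence $P_1 = P_2$.
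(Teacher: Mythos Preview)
Your overall strategy matches the paper's: maximize ${\rm C}_\mathfrak{p}(P(h))$ subject to the $\ell^p$ constraint, prove the maximizer lies in the open orthant, extract the measure identity from the Lagrange condition, and obtain uniqueness from the $L_p$ $\mathfrak{p}$-capacitary Minkowski inequality (packaged in the paper as Lemma~\ref{keylem5}). The paper's existence argument differs cosmetically---it uses convexity of the superlevel sets $\{y:{\rm C}_\mathfrak{p}(P(y))\ge t\}$ (via Lemma~\ref{keyLem2} and the Brunn--Minkowski inequality) together with strict convexity of the constraint body to locate a unique tangency point, thereby also getting uniqueness of the maximizer---but your direct compactness argument is fine for existence.

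There is, however, a real gap in your interior argument. In the ``residual degenerate subcase'' where $h_{i_0}^*=0$ and $\xi_{i_0}$ is redundant (so $\mu_\mathfrak{p}(P^*,\{\xi_{i_0}\})=0$), you invoke the Lagrange condition to force $c_{i_0}=0$. This is circular: the Lagrange rule needs $h^*$ to be an interior point of $\mathbb{R}_{\ge0}^m$, which is exactly what you are trying to prove; and even if one formally writes $p\,c_{i_0}(h_{i_0}^*)^{p-1}=\lambda(\mathfrak{p}-1)\mu_\mathfrak{p}(P^*,\{\xi_{i_0}\})$, both sides vanish, so nothing constrains $c_{i_0}$. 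The correct observation (this is the content of the paper's Lemma~\ref{keyLem3}) is that if $o\in\partial P^*$ then $o$ lies on some \emph{genuine} facet, so there exists an index $j$ with $h_{P^*}(\xi_j)=0$ and $\mu_\mathfrak{p}(P^*,\{\xi_j\})>0$; perturbing that index (or, as the paper does, all indices with $h_{P(y)}(\xi_i)=0$ simultaneously, compensating on the remaining coordinates) then yields the linear-in-$\varepsilon$ gain against an $O(\varepsilon^p)$ cost. Note also that Lemma~\ref{variationalFormulaCNSXYZ} as stated requires strictly positive support data, so when $o\in\partial P^*$ the first-order expansion cannot be read off that lemma directly; the paper instead computes the limit by hand using weak convergence of $\mu_\mathfrak{p}(P(y_t),\cdot)$ together with the explicit polytope formulas \eqref{mixedcapacityofAlekpoytope1}--\eqref{mixedcapacityofAlekpoytope2}.
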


To prove this theorem, we need  to make some preparations. Let
$\mathbb{R}^m_*=[0,\infty)^m$. For each nonzero
$y=(y_1,\ldots,y_m)\in\mathbb{R}^m_*$, define
\[P(y)=\bigcap_{i=1}^m\left\{x\in\mathbb{R}^n, x\cdot\xi_i\le y_i \right\}.\]
Then the unit outer normals to facets of $P(y)$ belong to
$\{\xi_1,\ldots,\xi_m\}$, and $P(y)$ is a polytope containing $o$.
Since  $\mu_\mathfrak{p}(P(y),\cdot)$ is absolutely continuous with
respect to $S_{P(y)}$,  we have
\begin{equation}\label{mixedcapacityofAlekpoytope1}
{\rm C}_\mathfrak{p}(P(y),P(z))=\frac{\mathfrak{p} - 1}{n -
\mathfrak{p}}\sum_{i = 1}^m h_{P(z)}(\xi_i){\mu
_\mathfrak{p}}(P(y),\{\xi _i\} ).
\end{equation}
Since  $h_{P(y)}(\xi_i)\le y_i$, with equality if
$S_{P(y)}(\{\xi_i\})>0$, for $i=1,\ldots,m$, we have
\begin{equation}\label{mixedcapacityofAlekpoytope2}
{{\rm C}_\mathfrak{p}}(P(y)) = \frac{{\mathfrak{p} - 1}}{{n -
\mathfrak{p}}}\sum_{i = 1}^m {{y_i}{\mu _\mathfrak{p}}(P(y),\{ {\xi_i}\} )}.
\end{equation}

To solve Problem 1, our strategy is  to attack the  following Problem 2. In the proof of Theorem 5.1, we can see that Problem 1 is essentially solved once we  solve
Problem 2. Precisely, we show that Problem 1 and Problem 2 have the identical solution.
\vskip 8pt

\noindent \textbf{Problem 2.} Among all elements $y$ in
$\mathbb{R}^m_*$, find an element which solves the following
constrained maximization problem
\[\max\limits_y {\rm C}_\mathfrak{p}(P(y))\quad\quad {\rm subject\; to}\quad \frac{\mathfrak{p}-1}{n-\mathfrak{p}}\sum_{i=1}^m c_iy_i^p=1. \]

\begin{lemma}\label{keyLem1}
${\rm C}_\mathfrak{p}(P(y))$ is continuous with respect to
$y\in\mathbb{R}^m_*\setminus\{o\}$.
\end{lemma}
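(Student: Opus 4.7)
The strategy is two-step: first, show that $y \mapsto P(y)$, as a map from $\mathbb{R}^m_*\setminus\{o\}$ into the space of compact convex subsets of $\mathbb{R}^n$, is continuous in the Hausdorff metric; then invoke continuity of ${\rm C}_{\mathfrak{p}}$ under Hausdorff convergence (noted in Subsection 2.3 for $\mathcal{K}^n$ and extending to degenerate compact convex limits).

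Fix $y^{(k)}\to y$ in $\mathbb{R}^m_*\setminus\{o\}$. Since $\xi_1,\ldots,\xi_m$ are not contained in any closed hemisphere, they positively span $\mathbb{R}^n$; a standard compactness argument then yields a constant $C>0$ such that $\|x\|\le C\|y^{(k)}\|_\infty$ for every $x\in P(y^{(k)})$. Hence $\{P(y^{(k)})\}$ is uniformly bounded, and by the Blaschke selection theorem every subsequence admits a further subsequence converging in Hausdorff metric to some compact convex set $K$. It suffices to show $K=P(y)$, after which continuity of $P(\cdot)$ follows by a standard subsequence argument.

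For one inclusion, any $x\in K$ is a limit of points $x_{k_j}\in P(y^{(k_j)})$, and passing to the limit in $x_{k_j}\cdot\xi_i\le y_i^{(k_j)}$ gives $x\cdot\xi_i\le y_i$, so $x\in P(y)$. For the reverse, fix $x\in P(y)$ and set $x_t=(1-t)x$ for small $t>0$. Since $o\in P(y^{(k)})$ (because $y_i^{(k)}\ge 0$) and $x\cdot\xi_i\le y_i$, convexity gives $x_t\cdot\xi_i\le(1-t)y_i$. When $y_i>0$, the strict inequality $(1-t)y_i<y_i^{(k)}$ holds for all $k$ sufficiently large; when $y_i=0$, the bound $x\cdot\xi_i\le 0$ forces $x_t\cdot\xi_i\le 0\le y_i^{(k)}$. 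Hence $x_t\in P(y^{(k)})$ eventually, so $x_t\in K$; closedness of $K$ and $t\to 0^+$ yield $x\in K$.

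The main obstacle I anticipate is the boundary behavior at points where some coordinate $y_i$ vanishes: there $P(y)$ may fail to be full-dimensional, so the invocation of continuity of ${\rm C}_{\mathfrak{p}}$ cannot be quoted purely within $\mathcal{K}^n$. The scaling device $x_t=(1-t)x$ is precisely what allows the lower-inclusion argument to survive the loss of interior, exploiting $o\in P(y^{(k)})$ to manufacture admissible approximants; once Hausdorff convergence is secured, one concludes using the standard fact (valid for $1<\mathfrak{p}<n$) that $\mathfrak{p}$-capacity is continuous on the larger class of compact convex sets, not just on $\mathcal{K}^n$.
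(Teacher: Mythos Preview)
Your proposal is correct and follows essentially the same two-step scheme as the paper: establish continuity of $y\mapsto P(y)$ in the Hausdorff metric, then compose with the known continuity of ${\rm C}_{\mathfrak p}$. The paper dispatches the first step in one line by citing Aleksandrov's convergence lemma, whereas you supply a self-contained Blaschke-selection argument with the scaling device $x_t=(1-t)x$; your version is more explicit (and correctly handles the case where some $y_i=0$), but the underlying strategy is identical.
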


\begin{proof}
By Aleksandrov's convergence theorem, $P(y)$ is continuous with
respect to $y\in\mathbb{R}^m_*\setminus\{o\}$. So, by the continuity
of $\mathfrak{p}$-capacity with respect to the Hausdorff metric,
${\rm C}_\mathfrak{p}(P(y))$ is continuous with respect to
$y\in\mathbb{R}^m_*\setminus\{o\}$.
\end{proof}

\begin{lemma}\label{keyLem2}
$P\left(\frac{y'+y''}{2} \right)\supseteq
\frac{1}{2}P(y')+\frac{1}{2}P(y'')$, for any nonzero $y',y''\in
\mathbb{R}^m_*$.
\end{lemma}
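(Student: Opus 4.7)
The plan is to verify the inclusion by a direct pointwise argument, unpacking the definition of $P(\cdot)$ as an intersection of halfspaces. Set $y = \tfrac{1}{2}(y'+y'')$; the Aleksandrov body $P(y)$ is defined by the linear constraints $x\cdot \xi_i \le \tfrac{1}{2}(y'_i + y''_i)$ for $i=1,\ldots,m$, so it suffices to check that an arbitrary element of $\tfrac{1}{2}P(y') + \tfrac{1}{2}P(y'')$ satisfies each of these constraints.

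I would pick $x \in \tfrac{1}{2}P(y') + \tfrac{1}{2}P(y'')$ and write it as $x = \tfrac{1}{2}x' + \tfrac{1}{2}x''$ with $x'\in P(y')$ and $x''\in P(y'')$. By the defining inequalities of $P(y')$ and $P(y'')$, for every $i$ we have $x'\cdot \xi_i \le y'_i$ and $x''\cdot \xi_i \le y''_i$. Adding and dividing by two yields
\[
x \cdot \xi_i \;=\; \tfrac{1}{2}\, x'\cdot \xi_i + \tfrac{1}{2}\, x''\cdot \xi_i \;\le\; \tfrac{1}{2}(y'_i + y''_i),
\]
so $x \in P\bigl(\tfrac{1}{2}(y'+y'')\bigr)$, proving the claimed inclusion.

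There is essentially no obstacle here: the inclusion is a formal consequence of the linearity of the inner product and of the Minkowski combination interacting correctly with the intersection of halfspaces. (Equality need not hold in general, since the intersection $\bigcap_i\{x\cdot \xi_i\le y_i\}$ of an enlarged family of linear constraints can be strictly larger than the Minkowski average of the two separate intersections.) This lemma is exactly the kind of concavity-type statement one expects to feed into a Brunn--Minkowski argument for $\mathrm{C}_{\mathfrak p}(P(y))$ in the subsequent proof that Problem~2 admits a maximizer.
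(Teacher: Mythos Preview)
Your proof is correct and matches the paper's argument essentially line for line: pick $x=\tfrac{1}{2}x'+\tfrac{1}{2}x''$ with $x'\in P(y')$, $x''\in P(y'')$, average the defining halfspace constraints, and conclude $x\in P\bigl(\tfrac{1}{2}(y'+y'')\bigr)$.
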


\begin{proof}
Let $x\in \frac{1}{2}P(y')+\frac{1}{2}P(y'')$. Then there exist $x'\in
P(y')$ and $x''\in P(y'')$, such that $x=\frac{x'+x''}{2}$ and for
each $i$,
\[x'\cdot\xi_i\le y_i'\quad {\rm and}\quad x''\cdot\xi_i\le y_i''.\]
Thus for each $i$, we have
\[x\cdot\xi_i=\frac{x'+x''}{2}\cdot\xi_i\le \frac{y_i'+y_i''}{2},\]
which implies that $x\in P\left(\frac{y'+y''}{2}\right)$.
\end{proof}

To prove Lemma \ref{keyLem3},  we adopt the elegant deformation technique,  which was previously employed
by Hug and LYZ \cite{Minkproblem(HugLYZ)}.
\begin{lemma}\label{keyLem3}
If $y\in \mathbb{R}^m_*$ solves Problem 2, then $o\in {\rm int} P(y)$.
\end{lemma}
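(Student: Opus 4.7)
\emph{Plan.} I will argue by contradiction: suppose $y \in \mathbb{R}^m_*$ solves Problem~2 but $o \in \partial P(y)$, and construct a feasible point with strictly larger capacity. Write $I_0 = \{i : y_i = 0\}$ and $I_+ = \{i : y_i > 0\}$; since $o \in \partial P(y)$ we have $I_0 \ne \emptyset$, and $I_+ \ne \emptyset$ because $F(y) = 1 > 0$. Because $y$ is a maximizer, ${\rm C}_\mathfrak{p}(P(y)) > 0$, so $P(y)$ has nonempty interior. By~(4.2) the $\mathfrak{p}$-capacitary measure $\mu_\mathfrak{p}(P(y), \cdot)$ places positive mass on some $\xi_{j_0}$ with $j_0 \in I_+$ (otherwise $P(y) = \{o\}$, a zero-capacity set). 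The driving observation is that because $p > 1$, lifting a zero coordinate $y_{i_0} = 0$ by $t$ only costs $O(t^p) = o(t)$ in the constraint budget $F$, while gaining $O(t)$ in capacity whenever $\xi_{i_0}$ is a facet normal.

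\emph{Main case.} Suppose there exists $i_0 \in I_0$ with $\mu_\mathfrak{p}(P(y), \{\xi_{i_0}\}) > 0$, that is, $\xi_{i_0}$ is a facet normal of $P(y)$ whose facet passes through $o$. Define
\[
  y^{(t)} \;=\; y + t\,e_{i_0} - \delta(t)\,e_{j_0}, \qquad t \ge 0,
\]
with $\delta(t) > 0$ uniquely determined by $F(y^{(t)}) = 1$. The equation $c_{i_0}\,t^p = c_{j_0}\bigl[y_{j_0}^p - (y_{j_0}-\delta(t))^p\bigr]$ yields, by Taylor expansion, $\delta(t) = c_{i_0} t^p/(p\,c_{j_0}\,y_{j_0}^{p-1}) + O(t^{2p})$, so $\delta(t) = O(t^p) = o(t)$ as $t \to 0^+$. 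Since the combinatorial type of $P(y^{(t)})$ is preserved for small $t > 0$, applying the Hadamard formula (in polytopal form, reading off derivatives of the facet support numbers) gives
\[
  {\rm C}_\mathfrak{p}(P(y^{(t)})) - {\rm C}_\mathfrak{p}(P(y)) \;=\; (\mathfrak{p}-1)\bigl[t\,\mu_\mathfrak{p}(P(y),\{\xi_{i_0}\}) - \delta(t)\,\mu_\mathfrak{p}(P(y),\{\xi_{j_0}\})\bigr] + o(t).
\]
The first bracketed term is linear in $t$ with strictly positive coefficient, while $\delta(t) = o(t)$, so the difference is strictly positive for small $t > 0$, contradicting the maximality of $y$.

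\emph{Reduction case.} Suppose instead that $\mu_\mathfrak{p}(P(y), \{\xi_i\}) = 0$ for every $i \in I_0$. Then each constraint $x \cdot \xi_i \le 0$ with $i \in I_0$ is redundant for $P(y)$, so $P(y) = P'(y|_{I_+}) := \bigcap_{i \in I_+}\{x : x\cdot \xi_i \le y_i\}$. Since $P(y)$ is bounded, $\{\xi_i : i \in I_+\}$ positively spans $\mathbb{R}^n$, so the analogue of Problem~2 restricted to $I_+$ is well-posed; moreover $y|_{I_+}$ is one of its maximizers (with the same capacity). I then proceed by induction on $m$: for the base case $m = n+1$ the reduction case cannot occur, since $|I_+| \le n$ would fail positive spanning, leaving only the main case; for $m > n+1$, the inductive hypothesis applied to the $|I_+| < m$ reduced problem yields $o \in {\rm int}\,P'(y|_{I_+}) = {\rm int}\,P(y)$, again contradicting the standing assumption.

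\emph{Expected obstacle.} The delicate step is the use of the Hadamard formula in the main case when $h_{P(y)}(\xi_{i_0}) = 0$, which falls outside the $h_0 > 0$ hypothesis of Lemma~3.1. I plan to handle this either by a direct polytopal argument (the face lattice of $P(y^{(t)})$ stabilizes for small $t > 0$, so the variation of capacity is controlled by the support-number derivatives weighted by $\mu_\mathfrak{p}(P(y),\cdot)$), or by approximation: translate $P(y)$ by a small vector inside the pointed normal cone $N(P(y),o)$ to obtain polytopes $P^{(\varepsilon)}$ with origin strictly interior, apply Lemma~3.1 there, and pass to the limit using the weak continuity of $\mu_\mathfrak{p}$ established in CNSXYZ. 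The base case of the induction is secured by the fact that $m$ vectors positively spanning $\mathbb{R}^n$ must satisfy $m \ge n+1$.
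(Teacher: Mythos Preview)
Your overall strategy (contradiction plus a deformation exploiting $p>1$ so that lifting a zero coordinate costs only $O(t^p)$ in the constraint) is exactly the paper's idea. Two points deserve comment.

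\textbf{The reduction case is vacuous, and your induction has a gap.} Once you observe that $\mu_\mathfrak{p}(P(y),\{\xi_i\})=0$ forces $S_{P(y)}(\{\xi_i\})=0$ (via the lower bound $\mu_\mathfrak{p}\ge c^{-\mathfrak p}S$ from CNSXYZ), the $I_0$-constraints are redundant and $P(y)=P'(y|_{I_+})$. But then $o$ satisfies $0<y_i$ strictly for every $i\in I_+$, so $o\in\mathrm{int}\,P'(y|_{I_+})=\mathrm{int}\,P(y)$ \emph{immediately}---no induction needed. This is fortunate, because your inductive step requires $y|_{I_+}$ to be a maximizer of the restricted Problem~2, and that is not justified: for a competitor $z'$ in the restricted problem, extending by zeros on $I_0$ gives $P(z)\subseteq P'(z')$, not equality, so a larger ${\rm C}_\mathfrak p(P'(z'))$ need not contradict maximality of $y$ in the original problem. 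Equivalently: if $o\in\partial P(y)$ then $o$ lies in some facet, whose normal is some $\xi_{i_0}$ with $y_{i_0}=h_{P(y)}(\xi_{i_0})=0$, hence $i_0\in I_0$ with $\mu_\mathfrak p(P(y),\{\xi_{i_0}\})>0$; you are always in the main case.

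\textbf{The Hadamard-formula obstacle you flag is genuine, and the paper resolves it differently.} Lemma~4.5 is stated only for $y\in\mathbb R^m_+$, and Lemma~3.1 needs $h_0>0$; neither applies at your $y$. The paper sidesteps this entirely: rather than differentiating ${\rm C}_\mathfrak p(P(y_t))$ at $t=0$, it computes
\[
\lim_{t\to 0^+}\frac{{\rm C}_\mathfrak p(P(y_t))-{\rm C}_\mathfrak p(P(y_t),P(y))}{t}
\]
directly from the integral formulas (4.1)--(4.2), using only the weak convergence $\mu_\mathfrak p(P(y_t),\cdot)\to\mu_\mathfrak p(P(y),\cdot)$, and then invokes the $\mathfrak p$-capacitary Minkowski inequality ${\rm C}_\mathfrak p(P(y_t),P(y))\ge {\rm C}_\mathfrak p(P(y_t))^{(n-\mathfrak p-1)/(n-\mathfrak p)}{\rm C}_\mathfrak p(P(y))^{1/(n-\mathfrak p)}$ to conclude $\liminf_{t\to 0^+}t^{-1}\bigl({\rm C}_\mathfrak p(P(y_t))^{1/(n-\mathfrak p)}-{\rm C}_\mathfrak p(P(y))^{1/(n-\mathfrak p)}\bigr)>0$. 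This avoids any differentiability claim at the degenerate point. Your approximation-by-translation idea could also be made to work, but the Minkowski-inequality route is both shorter and already available in the paper's toolkit.
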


\begin{proof}
We argue by contradiction and assume that $o\in \partial P(y)$. Let
$y=(y_1,\ldots,y_m)$ and $h_i=h_{P(y)}(\xi_i)$, for
$i=1,\ldots,m$. Since $o\in\partial P(y)$, w.l.f.g.,
assume that $h_1=\cdots=h_k=0$ and
$h_{k+1},\ldots,h_m>0$, for some $1\le k< m$. In the next, we
will construct a new polytope $P(z)$ with $o$ in its interior,
such that $z$ satisfies the constraint in Problem 2 but ${\rm C}_\mathfrak{p}(P(z))> {\rm C}_\mathfrak{p}(P(y))$.

Let $c= {\sum_{i=1}^k c_i}/{\sum_{i=k+1}^m c_i}$ and $ {t_0} = {
{\min \left\{ {h_i^p/c:1 \le i \le k} \right\}} ^{\frac{1}{p}}}$.
For $0\le t<t_0$, let
\[{y_t} = \left( {t, \ldots ,t,{{(h_{k + 1}^p - c{t^p})}^{\frac{1}{p}}}, \ldots ,{{(h_m^p - c{t^p})}^{\frac{1}{p}}}} \right).\]
Then, $y_t\in (0,\infty)^m$ for $0<t<t_0$ and $P(y_0)=P(y)$.
From (\ref{mixedcapacityofAlekpoytope1}) combined with
(\ref{mixedcapacityofAlekpoytope2}),  and then the fact  $\lim_{t\to
0^+}P(y_t)= P(y)$ combined with the weak convergence of
$\mathfrak{p}$-capacitary measures, we have
\begin{align*}
&\mathop {\lim }\limits_{t \to {0^ + }} \frac{{{{\rm{C}}_\mathfrak{p}}(P({y_t})) - {{\rm{C}}_\mathfrak{p}}(P({y_t}),P(y))}}{t} \\
&\quad= \frac{{\mathfrak{p} - 1}}{{n - \mathfrak{p}}}\left( {\sum\limits_{i = 1}^k {\mathop {\lim }\limits_{t \to {0^ + }} \frac{{t - 0}}{t}{\mu _\mathfrak{p}}(P({y_t}),\{ {\xi _i}\} )}  + \sum\limits_{i = k + 1}^m {\mathop {\lim }\limits_{t \to {0^ + }} \frac{{{{(h_i^p - c{t^p})}^{\frac{1}{p}}} - {h_i}}}{t}{\mu _\mathfrak{p}}(P({y_t}),\{ {\xi _i}\} )} } \right) \\
&\quad= \frac{{\mathfrak{p} - 1}}{{n - \mathfrak{p}}}\sum\limits_{i = 1}^k {{\mu _\mathfrak{p}}(P(y),\{ {\xi _i}\})} .
\end{align*}

Since there is at least one facet of $P(y)$ containing $o$, it follows that
$\sum_{i=1}^kS_{P(y)}(\{\xi_i\})>0$. Also, by CNSXYZ
\cite[Lemma 2.18]{capacitaryBMtheory(CNSXYZ)}, there exists a positive
constant $c$ depending on $n$, $\mathfrak p$ and the radius of a
ball containing $P(y)$, such that $\mu_\mathfrak{p}(P(y),\cdot)\ge
c^{-\mathfrak{p}} S_{P(y)}$. So,
$\sum_{i=1}^k\mu_\mathfrak{p}(P(y),\{\xi_i\})>0$. This in turn  implies that
\[\mathop {\lim }\limits_{t \to {0^ + }} \frac{{{{\rm C}_\mathfrak{p}}(P({y_t})) - {{\rm C}_\mathfrak{p}}(P({y_t}),P(y))}}{t} > 0.\]

Hence, by the $\mathfrak{p}$-capacitary Minkowski inequality and
continuity of ${\rm C}_\mathfrak{p}(P(y_t))$ in
$t$, we have
\begin{align*}
&{{\rm C}_\mathfrak{p}}{(P(y))^{\frac{{n - \mathfrak{p} - 1}}{{n - \mathfrak{p}}}}}\mathop {\lim \inf }\limits_{t \to {0^ + }} \frac{{{{\rm C}_\mathfrak{p}}{{(P({y_t}))}^{\frac{1}{{n - \mathfrak{p}}}}} - {{\rm C}_\mathfrak{p}}{{(P(y))}^{\frac{1}{{n - \mathfrak{p}}}}}}}{t} \\
&\quad= \mathop {\lim \inf }\limits_{t \to {0^ + }} \frac{{{{\rm C}_\mathfrak{p}}(P({y_t})) - {{\rm C}_\mathfrak{p}}{{(P({y_y}))}^{\frac{{n - \mathfrak{p} - 1}}{{n - \mathfrak{p}}}}}{{\rm C}_\mathfrak{p}}{{(P(y))}^{\frac{1}{{n - \mathfrak{p}}}}}}}{t} \\
&\quad\ge \mathop {\lim \inf }\limits_{t \to {0^ + }} \frac{{{{\rm C}_\mathfrak{p}}(P({y_t})) - {{\rm C}_\mathfrak{p}}(P({y_y}),P(y))}}{t} \\
&\quad > 0.
\end{align*}
Consequently, for  sufficiently small $t$, we have
${{\rm C}_\mathfrak{p}}(P({y_t})) > {{\rm C}_\mathfrak{p}}(P(y))$.

Now, choose a sufficiently small $t>0$ and let
\[z = \left( {{{({y_1}^p + {t^p})}^{\frac{1}{p}}}, \cdots ,{{({y_k}^p + {t^p})}^{\frac{1}{p}}},{{(y_{k + 1}^p - c{t^p})}^{\frac{1}{p}}}, \ldots ,{{(y_m^p - c{t^p})}^{\frac{1}{p}}}} \right).\]
Then $z$ satisfies the constraint in Problem 2. Since $0<h_i\le
y_i$, $k+1\le i\le m$, it follows that $P(y_t)\subseteq P(z)$. So, ${\rm
C}_\mathfrak{p}(P(z))> {{\rm C}_\mathfrak{p}}(P(y))$. In light of $o\in {\rm int} P(y_t)$, it yields that $o\in {\rm int} P(z)$.
\end{proof}

Let $y=(y_1,\ldots,y_m)\in \mathbb{R}^m_+=(0, +\infty)^{m}$. For $z\in\mathbb{R}^m$, applying the Hadamard variational
formula to $P(y+t z)$, it yields that
\[{\left. {\frac{d{{\rm C}_\mathfrak{p}}(P(y + tz))}{{dt}}} \right|_{t = 0}} = (\mathfrak{p} - 1)\sum\limits_{i = 1}^m {{z_i}{\mu _\mathfrak{p}}(P(y),\{ {\xi _i}\} )} .\]
Thus, we obtain the following useful formula.

\begin{lemma}\label{keyLem4}
$\frac{\partial {{\rm C}_\mathfrak{p}}(P(y))}{{\partial {y_i}}} =
(\mathfrak{p} - 1){\mu _\mathfrak{p}}(P(y),\{ {\xi _i}\} )$,  for
$y\in\mathbb{R}^m_+$ and $ i=1,\cdots,m$.
\end{lemma}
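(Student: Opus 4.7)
The lemma is the specialization $z = e_i$ (the $i$-th standard basis vector) of the directional derivative identity displayed immediately above its statement. So the real task is to justify that identity, which I would do via Lemma \ref{variationalFormulaCNSXYZ}.

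Take $h_t = h_{P(y+tz)} \in C_+(\mathbb{S}^{n-1})$; since $y \in \mathbb{R}^m_+$ forces $h_{P(y)} > 0$ on the sphere, continuity in $t$ guarantees $h_t > 0$ for small $|t|$. By the standard identity relating support functions and Aleksandrov bodies, ${\scriptstyle \pmb \lbrack} h_t {\scriptstyle \pmb \rbrack} = P(y+tz)$, so ${\rm C}_\mathfrak{p}(h_t) = {\rm C}_\mathfrak{p}(P(y+tz))$. For $|t|$ small, $P(y+tz)$ inherits the combinatorial type of $P(y)$: each vertex $v$ of $P(y)$, determined by a subsystem $v \cdot \xi_j = y_j$ indexed by a subset of facet normals, deforms linearly to $v(t) = v + t w_v$, where $w_v$ solves the same subsystem with $y$ replaced by $y+tz$. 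Hence $h_t(\xi) = \max_v v(t) \cdot \xi$ is piecewise linear in $\xi$ with a fixed normal fan, and the difference quotient $(h_t - h_0)/t$ converges uniformly on $\mathbb{S}^{n-1}$ to a continuous function $h'(0, \cdot)$ satisfying $h'(0, \xi_j) = z_j$ at every facet normal $\xi_j$ of $P(y)$.

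Applying Lemma \ref{variationalFormulaCNSXYZ} in both one-sided directions $t \to 0^{\pm}$ then gives
\[
\left.\frac{d {\rm C}_\mathfrak{p}(P(y+tz))}{dt}\right|_{t=0} = (\mathfrak{p}-1) \int_{\mathbb{S}^{n-1}} h'(0, \xi) \, d\mu_\mathfrak{p}(P(y), \xi).
\]
Because $\mu_\mathfrak{p}(P(y), \cdot)$ is absolutely continuous with respect to $S_{P(y)}$, which is concentrated on the finite set of facet normals of $P(y)$, the integral collapses to the sum $\sum_{i=1}^m z_i \mu_\mathfrak{p}(P(y), \{\xi_i\})$, noting that any $\xi_i$ which is not a facet normal of $P(y)$ contributes zero mass. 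Setting $z = e_i$ yields the lemma.

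The main obstacle is the two combinatorial checks in the middle paragraph: verifying uniform convergence of the difference quotient and identifying $h'(0, \xi_j) = z_j$ at facet normals. Both reduce to the fact that the normal fan of $P(y+tz)$ is stable under small perturbations of $y$, so the piecewise linear structure of $h_t$ is rigid and depends smoothly on $t$; everything else is bookkeeping.
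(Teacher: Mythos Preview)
Your approach is essentially the paper's: it derives the lemma from the directional derivative identity immediately preceding it, and justifies that identity by feeding the family $t\mapsto P(y+tz)$ into the Hadamard variational formula (Lemma~\ref{variationalFormulaCNSXYZ}), then collapsing the integral to a finite sum via the absolute continuity of $\mu_{\mathfrak p}(P(y),\cdot)$ with respect to $S_{P(y)}$. The paper compresses all of this into the single phrase ``applying the Hadamard variational formula to $P(y+tz)$''; your paragraph on combinatorial stability and uniform convergence of the difference quotient is exactly the detail the paper suppresses (with the small caveat that when $P(y)$ is not simple the normal fan may differ for $t>0$ versus $t<0$, so the stability claim and uniform convergence should really be argued one side at a time---which is consistent with your plan to apply the lemma in each one-sided direction separately).
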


\begin{lemma}\label{keylem5}
Suppose $1<p<\infty$ and $1<{\mathfrak p}<n$. If
$K,L\in\mathcal{K}^n_o$ are such that ${\rm C}_{\mathfrak
p}(K)^{-1}\mu_{p,\mathfrak p}(K,\cdot)={\rm C}_{\mathfrak
p}(L)^{-1}\mu_{p,\mathfrak p}(L,\cdot)$, then $K=L$.
\end{lemma}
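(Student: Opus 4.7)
The plan is to deduce the lemma from Theorem 3.8 (the $L_p$ Minkowski inequality for $\mathfrak{p}$-capacity) applied in both directions. The hypothesis is weaker than the hypothesis ``$\mu_{p,\mathfrak{p}}(K,\cdot)=\mu_{p,\mathfrak{p}}(L,\cdot)$'' appearing in Theorem 3.11(2); specifically, the $\mathfrak{p}$-capacities of $K$ and $L$ may differ by a positive factor $\alpha={\rm C}_{\mathfrak{p}}(L)/{\rm C}_{\mathfrak{p}}(K)$. The main point is to show $\alpha=1$, after which the argument collapses to Theorem 3.11(2) (or its equality-case variant).

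First I would translate the hypothesis into a statement about mixed $\mathfrak{p}$-capacities. Using Definition 3.6, for every $Q\in\mathcal{K}^n_o$,
\[{\rm C}_{p,\mathfrak{p}}(L,Q)=\frac{\mathfrak{p}-1}{n-\mathfrak{p}}\int_{\mathbb{S}^{n-1}}h_Q(\xi)^p\,d\mu_{p,\mathfrak{p}}(L,\xi)=\alpha\,{\rm C}_{p,\mathfrak{p}}(K,Q).\]
Setting $Q=L$ and $Q=K$ and using ${\rm C}_{p,\mathfrak{p}}(K,K)={\rm C}_{\mathfrak{p}}(K)$, I obtain the two identities
\[{\rm C}_{p,\mathfrak{p}}(K,L)={\rm C}_{\mathfrak{p}}(K),\qquad {\rm C}_{p,\mathfrak{p}}(L,K)={\rm C}_{\mathfrak{p}}(L).\]

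Next I would substitute these identities into Theorem 3.8 (equivalently, its dilate-form inequality ${\rm C}_{p,\mathfrak{p}}(A,B)^{n-\mathfrak{p}}\ge{\rm C}_{\mathfrak{p}}(A)^{n-\mathfrak{p}-p}{\rm C}_{\mathfrak{p}}(B)^p$, with equality iff $A,B$ are dilates). The two applications yield
\[{\rm C}_{\mathfrak{p}}(K)^{n-\mathfrak{p}}\ge{\rm C}_{\mathfrak{p}}(K)^{n-\mathfrak{p}-p}{\rm C}_{\mathfrak{p}}(L)^p,\qquad {\rm C}_{\mathfrak{p}}(L)^{n-\mathfrak{p}}\ge{\rm C}_{\mathfrak{p}}(L)^{n-\mathfrak{p}-p}{\rm C}_{\mathfrak{p}}(K)^p.\]
The first simplifies (using $p>0$) to ${\rm C}_{\mathfrak{p}}(K)\ge{\rm C}_{\mathfrak{p}}(L)$, and the second to the reverse inequality. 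Hence ${\rm C}_{\mathfrak{p}}(K)={\rm C}_{\mathfrak{p}}(L)$ and $\alpha=1$, and equality must hold in both applications of Theorem 3.8. The equality case of Theorem 3.8 forces $K$ and $L$ to be dilates, say $K=sL$. Since ${\rm C}_{\mathfrak{p}}$ is positively homogeneous of order $n-\mathfrak{p}$ and ${\rm C}_{\mathfrak{p}}(K)={\rm C}_{\mathfrak{p}}(L)$, one concludes $s=1$, i.e.\ $K=L$.

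I do not expect any serious obstacle here: the proof is essentially a two-sided application of Theorem 3.8, and the argument works uniformly for every admissible pair $(p,\mathfrak{p})$, including the delicate case $n-\mathfrak{p}=p$ (where Theorem 3.11 only yields dilation, but the normalization by ${\rm C}_{\mathfrak{p}}$ together with the homogeneity of capacity pins down the dilation factor). The only point requiring mild care is the direction of the inequalities extracted from the exponents, which is why I would explicitly note $p>0$ when canceling common factors.
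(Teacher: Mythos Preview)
Your proposal is correct and follows essentially the same route as the paper: derive ${\rm C}_{p,\mathfrak{p}}(K,L)={\rm C}_{\mathfrak{p}}(K)$ (and symmetrically ${\rm C}_{p,\mathfrak{p}}(L,K)={\rm C}_{\mathfrak{p}}(L)$) from the hypothesis, then apply Theorem~3.8 in both directions to force ${\rm C}_{\mathfrak{p}}(K)={\rm C}_{\mathfrak{p}}(L)$ and invoke the equality case to conclude $K=L$. The paper's write-up differs only cosmetically, starting from the identity $1={\rm C}_{p,\mathfrak{p}}(K,L)/{\rm C}_{\mathfrak{p}}(K)$ rather than introducing your scalar $\alpha$.
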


\begin{proof}
From the Poincar$\rm\acute{e}$ $\mathfrak p$-capacity formula
together with Definition \ref{DefOfCapacitaryMeasure},   the
supposition that ${\rm C}_{\mathfrak p}(K)^{-1}\mu_{p,\mathfrak
p}(K,\cdot)={\rm C}_{\mathfrak p}(L)^{-1}\mu_{p,\mathfrak
p}(L,\cdot)$, Definitions \ref{DefLqMixedCap} and
\ref{DefOfCapacitaryMeasure}, and Theorem
\ref{p-capMinkowskiineqthm}, it follows that
\begin{align*}
 1 &= \frac{{\mathfrak{p} - 1}}{{(n - \mathfrak{p}){{\rm{C}}_\mathfrak{p}}(L)}}\int\limits_{{S^{n - 1}}} {h_L^pd{\mu _{p,\mathfrak{p}}}(L, \cdot )}  \\
  &= \frac{{\mathfrak{p} - 1}}{{(n - \mathfrak{p}){{\rm{C}}_\mathfrak{p}}(K)}}\int\limits_{{S^{n - 1}}} {h_L^pd{\mu _{p,\mathfrak{p}}}(K, \cdot )}  \\
  &= \frac{{{C_{p,\mathfrak{p}}}(K,L)}}{{{{\rm{C}}_\mathfrak{p}}(K)}} \\
  &\ge {\left( {\frac{{{{\rm{C}}_\mathfrak{p}}(L)}}{{{{\rm{C}}_\mathfrak{p}}(K)}}} \right)^{\frac{p}{{n - \mathfrak{p}}}}}.
 \end{align*}
Thus, ${\rm C}_{\mathfrak{p}}(K) \ge {\rm C}_{\mathfrak{p}}(L)$.
Interchanging $K$ and $L$, we have ${\rm C}_{\mathfrak{p}}(L) \ge
{\rm C}_{\mathfrak{p}}(K)$. So, by Theorem
\ref{p-capMinkowskiineqthm}, the convex bodies $K$ and $L$ are
dilates, so that ${\rm C}_{\mathfrak{p}}(K) = {\rm
C}_{\mathfrak{p}}(L)$. In other words, $K=L$.
\end{proof}

What follows provides the proof of  Theorem \ref{discreteCapMinkowskiThm}.

\begin{proof}[\bf Proof of Theorem \ref{discreteCapMinkowskiThm}]
Let
\[\mathcal{B}=\left\{y\in\mathbb{R}^m_*: \frac{\mathfrak{p}-1}{n-\mathfrak{p}}\sum_{i=1}^m c_iy_i^p \le 1 \right\} \]
and
\[\mathcal{E}_t=\left\{y\in\mathbb{R}^m_*: {\rm C}_\mathfrak{p}(P(y))\ge t \right\},\quad {\rm for}\; t>0. \]
Then $\mathcal{B}$ is a convex body in $\mathbb{R}^m$. By Lemma \ref{keyLem1}, $\mathcal{E}_t$ is a closed set.

Pick up  $y',y''\in \mathcal{E}_t$. From Lemma \ref{keyLem2}, the
monotonicity of $\mathfrak{p}$-capacity  and the $\mathfrak{p}$-capacitary Brunn-Minkowski
inequality, it follows that
\begin{align*}
{{\rm C}_\mathfrak{p}}\left( {P\left( {\frac{{y' + y''}}{2}} \right)} \right) &\ge {{\rm C}_\mathfrak{p}}\left( {\frac{1}{2}P(y') + \frac{1}{2}P(y'')} \right) \\
&\ge {\left( {\frac{1}{2}{{\rm C}_\mathfrak{p}}{{\left( {P(y')} \right)}^{\frac{1}{{n - \mathfrak{p}}}}} + \frac{1}{2}{{\rm C}_\mathfrak{p}}{{\left( {P(y')} \right)}^{\frac{1}{{n - \mathfrak{p}}}}}} \right)^{n - \mathfrak{p}}} \\
&= t,
\end{align*}
which implies that $\frac{y'+y''}{2}\in \mathcal{E}_t$. Hence,
$\mathcal{E}_t$ is convex. Since ${\rm C}_\mathfrak{p}(P(sy))=s^{n-\mathfrak{p}}{\rm C}_\mathfrak{p}(P(y))$,
for nonzero $y\in\mathbb{R}^m_*$ and $s>0$, it follows that
$\mathcal{E}_t$ is unbounded and strictly decreasing (with respect
to set inclusion) when $t$ is increasing, and its interior is
nonempty. So, when $t$ is sufficiently big,
$\mathcal{E}_t\cap\mathcal{B}=\emptyset$; when $t$ is sufficiently
small, ${\rm int}(\mathcal{E}_t)\cap {\rm
int}(\mathcal{B})\neq\emptyset$.

Consequently,  there exists a unique $t_0>0$ such that
$ \mathcal{E}_{t_0}\cap\mathcal{B}=\partial \mathcal{E}_{t_0}\cap \partial\mathcal{B}$.
Since the set $\{y\in\mathbb{R}^m: \frac{\mathfrak{p}-1}{n-\mathfrak{p}}\sum_{i=1}^mc_i|y_i|^p\le 1\}$
is a strictly convex body in $\mathbb{R}^m$ with smooth boundary,
the sets $ \mathcal{E}_{t_0}$ and $ \mathcal{B}$ necessarily share a unique
common boundary point, say $\tilde{y}$. In other words, for any $y\in \partial\mathcal{B}$, we have
$${\rm C}_\mathfrak{p}(P(\tilde{y}))\geq {\rm C}_\mathfrak{p}(P(y)),$$
with equality if and only if $y=\tilde{y}$. This proves the unique existence of  solution to Problem 2.

We proceed to prove that $P(\tilde{y})$ uniquely solves Problem 1.

By Lemma \ref{keyLem3}, the polytope $P(\tilde{y})$ contains the
origin in its interior. Therefore, $\tilde{y}\in\mathbb{R}^m_+$.
Since  ${\left. {\nabla \left( {\sum\limits_{i = 1}^m
{{c_i}y_i^p} } \right)} \right|_{\tilde y}}$ is a normal  of
$\mathcal{B}$ at  $\tilde{y}$ with components $p c_i\tilde{y}_i^{p-1}$, and   $\nabla {\rm C}_\mathfrak{p}(P(y))|_{\tilde{y}}$ is a normal  of $\mathcal{E}_{t_0}$
at  $\tilde{y}$ with components $ (\mathfrak{p} - 1){\mu _\mathfrak{p}}(P(\tilde y),\{{\xi _i}\})$  by Lemma \ref{keyLem4},
so there exists a unique
$s_0>0$ such that for each $i$,
${c_i}{\tilde y}_i^{p} = s_0{\tilde y}_i{\mu _\mathfrak{p}}(P(\tilde y),\{ {\xi _i}\} ).$
Since for each $i$, $c_i>0$ and ${\tilde y}_i>0$,  this in turn implies that
$\mu_\mathfrak{p}(P(\tilde{y}),\{\xi_i\})>0$. In light of
$\mu_\mathfrak{p}(P(\tilde{y}),\cdot)$ is absolutely continuous with
respect to $S_{P(\tilde{y})}$, so each $\xi_i$ is a unit normal
of $P(\tilde{y})$. Hence, $h_{P(\tilde{y})}(\xi_i)=y_i$, for each
$i$. Consequently,
\begin{align*}
s_0{{\rm C}_\mathfrak{p}}(P(\tilde{y})) &= s_0 \cdot \frac{{\mathfrak{p} - 1}}{{n - \mathfrak{p}}}\sum\limits_{i = 1}^m {{\tilde{y}_i}{\mu _\mathfrak{p}}(P(\tilde{y}),\{ {\xi _i}\} )}  \\
&= \frac{{\mathfrak{p} - 1}}{{n - \mathfrak{p}}}\sum\limits_{i = 1}^m {s_0{\tilde{y}_i}{\mu _\mathfrak{p}}(P(\tilde{y}),\{ {\xi _i}\} )}  \\
&= \frac{{\mathfrak{p} - 1}}{{n - \mathfrak{p}}}\sum\limits_{i = 1}^m {{c_i}\tilde{y}_i^p}  \\
&= 1,
\end{align*}
which yields that
\[ s_0=\frac{1}{{\rm C}_\mathfrak{p}(P(\tilde{y}))}. \]

Furthermore,
\begin{align*}
\sum\limits_{i = 1}^m {{c_i}{\delta _{{\xi _i}}}}  &= \frac{{\sum_{i = 1}^m {\tilde y_i^{1 - p}{\mu _\mathfrak{p}}(P(\tilde y),\{ {\xi _i}\} ){\delta _{{\xi _i}}}} }}{{{{\rm C}_\mathfrak{p}}(P(\tilde y))}} \\
&= \frac{{\sum_{i = 1}^m {{h_{P(\tilde y)}}{{({\xi _i})}^{1 - p}}{\mu _\mathfrak{p}}(P(\tilde y),\{ {\xi _i}\} ){\delta _{{\xi _i}}}} }}{{{{\rm C}_\mathfrak{p}}(P(\tilde y))}} \\
&= \frac{{{\mu_{p,\mathfrak{p}}}(P(\tilde y), \cdot )}}{{{{\rm
C}_\mathfrak{p}}(P(\tilde y))}}.
\end{align*}
Put it in other words, $P(\tilde{y})$ is a solution to Problem 1,
and is unique by Lemma \ref{keylem5}.
\end{proof}

From Theorem \ref{discreteCapMinkowskiThm}, we immediately obtain the following results.

\begin{corollary}\label{discreteCapMinkowskiThm2}
Suppose $1< p<\infty$,  $1<\mathfrak{p}<n$ and $n-\mathfrak{p}\neq
p$. If $\mu$ is a finite discrete measure on $\mathbb{S}^{n-1}$
which is not concentrated on a closed hemisphere, then there exists
a unique convex polytope $P\in\mathcal{K}^n_o$ such that $
\mu_{p,\mathfrak{p}}(P,\cdot)=\mu$.
\end{corollary}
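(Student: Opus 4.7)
The plan is to deduce the corollary from Theorem \ref{discreteCapMinkowskiThm} by a scaling trick that is licensed exactly by the hypothesis $n-\mathfrak{p}\neq p$, with uniqueness handed to us by Theorem \ref{characterizaion1}(2).

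First, I would unpack the discrete measure $\mu$ as $\mu=\sum_{i=1}^{m}c_i\delta_{\xi_i}$, where the $\xi_i\in\mathbb{S}^{n-1}$ are pairwise distinct and the $c_i>0$. Since $\mu$ is not concentrated on any closed hemisphere, the points $\xi_1,\ldots,\xi_m$ are not contained in any closed hemisphere either, so Theorem \ref{discreteCapMinkowskiThm} applies. It produces a polytope $P'\in\mathcal{K}^n_o$ such that
\[
\mu_{p,\mathfrak{p}}(P',\cdot) \;=\; {\rm C}_\mathfrak{p}(P')\,\mu.
\]

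Next I would rescale to absorb the capacity factor. For $s>0$, Lemma \ref{positivehomogeneitymeasure2} gives $\mu_{p,\mathfrak{p}}(sP',\cdot)=s^{n-\mathfrak{p}-p}\mu_{p,\mathfrak{p}}(P',\cdot)$, so
\[
\mu_{p,\mathfrak{p}}(sP',\cdot) \;=\; s^{n-\mathfrak{p}-p}\,{\rm C}_\mathfrak{p}(P')\,\mu.
\]
Because $n-\mathfrak{p}-p\neq 0$, I may solve $s^{n-\mathfrak{p}-p}{\rm C}_\mathfrak{p}(P')=1$ and set
\[
s := {\rm C}_\mathfrak{p}(P')^{-1/(n-\mathfrak{p}-p)}, \qquad P := sP'.
\]
Then $P$ is a polytope in $\mathcal{K}^n_o$ with $\mu_{p,\mathfrak{p}}(P,\cdot)=\mu$, giving the existence claim.

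For uniqueness, suppose $P_1,P_2\in\mathcal{K}^n_o$ both satisfy $\mu_{p,\mathfrak{p}}(P_j,\cdot)=\mu$. Then $\mu_{p,\mathfrak{p}}(P_1,\cdot)=\mu_{p,\mathfrak{p}}(P_2,\cdot)$, and Theorem \ref{characterizaion1}(2), valid precisely under the hypothesis $n-\mathfrak{p}\neq p$, forces $P_1=P_2$. Since no new analytic work is required beyond invoking the two results already proved, there is no real obstacle here; the only point that must be checked carefully is that the exponent $n-\mathfrak{p}-p$ is nonzero, which is exactly the hypothesis excluded from the statement and which is also what makes Theorem \ref{characterizaion1}(2) applicable for uniqueness.
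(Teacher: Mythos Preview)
Your proof is correct and follows essentially the same route as the paper: apply Theorem \ref{discreteCapMinkowskiThm} to obtain a polytope $P'$ with $\mu_{p,\mathfrak{p}}(P',\cdot)={\rm C}_\mathfrak{p}(P')\mu$, then rescale via Lemma \ref{positivehomogeneitymeasure2} using the hypothesis $n-\mathfrak{p}\neq p$. The only difference is that you explicitly invoke Theorem \ref{characterizaion1}(2) for uniqueness, whereas the paper's proof of the corollary leaves uniqueness implicit; your version is slightly more complete in this respect.
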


\begin{proof}
By Theorem \ref{discreteCapMinkowskiThm}, there exists a unique
convex polytope $P^*\in\mathcal{K}^n_o$, such that
$\frac{\mu_{p,\mathfrak{p}}(P^*,\cdot)}{{\rm C}_\mathfrak{p}(P^*)}=
\mu$. Let $P = {{\rm C}_\mathfrak{p}}{(P^*)^{ -\frac{1}{{n -
\mathfrak{p} - p}}}}P^*$. Then,
\[\mu  = \frac{{{\mu _{p,\mathfrak{p}}}\left( {{{\rm C}_\mathfrak{p}}{{(P^*)}^{\frac{1}{{n - \mathfrak{p} - p}}}}P, \cdot } \right)}}{{{{\rm C}_\mathfrak{p}}(P^*)}} = \frac{{{{\rm C}_\mathfrak{p}}(P^*){\mu _{p,\mathfrak{p}}}\left( {P, \cdot } \right)}}{{{{\rm C}_\mathfrak{p}}(P^*)}} = {\mu _{p,\mathfrak{p}}}\left( {P, \cdot } \right),\]
as desired.
\end{proof}

The following lemma shows the solution to the \emph{even} $L_p$
Minkowski problem for $\mathfrak{p}$-capacity is symmetric.

\begin{lemma}\label{characterizaion5}
Suppose $1\le p<\infty$ and $1<\mathfrak{p}<n$. If
$K\in\mathcal{K}^n_o$,  then the following statements are
equivalent.

\noindent (1)  $K$ is origin-symmetric when $p>1$, or centrally
symmetric when $p=1$.

\noindent (2) $\mu_{p, \mathfrak{p}}(K,\cdot)$ is even.

\noindent (3) ${\rm C}_{p,\mathfrak{p}}(K,-Q)={\rm
C}_{p,\mathfrak{p}}(K,Q)$, for all $Q\in\mathcal{K}^n_o$.

\noindent (4) ${\rm C}_{p,\mathfrak{p}}(K,-K)={\rm
C}_\mathfrak{p}(K)$.
\end{lemma}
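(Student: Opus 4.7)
The plan is to establish the cycle $(1) \Rightarrow (2) \Rightarrow (3) \Rightarrow (4) \Rightarrow (1)$. The first three links are essentially consequences of the integral definition of $\mu_{p,\mathfrak{p}}(K,\cdot)$ and the symmetry properties of the $\mathfrak{p}$-capacitary measure under reflection and translation, while the last link, which is the main obstacle, relies on the equality case of the $L_p$ Minkowski inequality for $\mathfrak{p}$-capacity (Theorem \ref{p-capMinkowskiineqthm}).

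For $(1)\Rightarrow(2)$, I would split into the two cases. When $p>1$ and $K=-K$, the rigid invariance of $\mu_\mathfrak{p}$ under the reflection $x\mapsto -x$ yields
\[
\mu_\mathfrak{p}(K,-\omega)=\mu_\mathfrak{p}(-K,\omega)=\mu_\mathfrak{p}(K,\omega),
\]
so $\mu_\mathfrak{p}(K,\cdot)$ is even; since $h_K$ is also even, Definition \ref{DefOfLqCapMeasure} then forces $\mu_{p,\mathfrak{p}}(K,\cdot)$ to be even as well. When $p=1$, the hypothesis is only central symmetry $K=-K+2x_0$; the translation invariance of $\mu_\mathfrak{p}$ absorbs the shift and the preceding argument applies, noting $\mu_{1,\mathfrak{p}}=\mu_\mathfrak{p}$. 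For $(2)\Rightarrow(3)$, starting from Definition \ref{DefLqMixedCap},
\[
{\rm C}_{p,\mathfrak{p}}(K,-Q)=\frac{\mathfrak{p}-1}{n-\mathfrak{p}}\int_{\mathbb{S}^{n-1}}h_Q(-\xi)^p\,d\mu_{p,\mathfrak{p}}(K,\xi),
\]
and the substitution $\xi\mapsto -\xi$ together with evenness of $\mu_{p,\mathfrak{p}}(K,\cdot)$ converts the right-hand side into ${\rm C}_{p,\mathfrak{p}}(K,Q)$. The implication $(3)\Rightarrow(4)$ is immediate upon taking $Q=K$ and using ${\rm C}_{p,\mathfrak{p}}(K,K)={\rm C}_\mathfrak{p}(K)$.

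The core step, and the one I expect to be the main obstacle, is $(4)\Rightarrow(1)$. Applying Theorem \ref{p-capMinkowskiineqthm} to the pair $(K,-K)$ and using the rigid invariance ${\rm C}_\mathfrak{p}(-K)={\rm C}_\mathfrak{p}(K)$ gives
\[
{\rm C}_{p,\mathfrak{p}}(K,-K)^{n-\mathfrak{p}}\ge {\rm C}_\mathfrak{p}(K)^{n-\mathfrak{p}-p}{\rm C}_\mathfrak{p}(-K)^p={\rm C}_\mathfrak{p}(K)^{n-\mathfrak{p}},
\]
so ${\rm C}_{p,\mathfrak{p}}(K,-K)\ge {\rm C}_\mathfrak{p}(K)$, with equality precisely in the equality case of Theorem \ref{p-capMinkowskiineqthm}. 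Assumption (4) forces that equality. For $p>1$ the equality case yields that $K$ and $-K$ are dilates, so $-K=sK$ for some $s>0$; taking ${\rm C}_\mathfrak{p}$ of both sides and using the $(n-\mathfrak{p})$-homogeneity forces $s^{n-\mathfrak{p}}=1$, whence $s=1$ and $K=-K$. For $p=1$ the equality case reduces to that of the $\mathfrak{p}$-capacitary Minkowski inequality and only gives that $K$ and $-K$ are homothetic; the same capacity argument still pins the scaling factor at $1$, producing $-K=K+x$ for some $x\in\mathbb{R}^n$, which is exactly central symmetry. The delicate part is this careful separation of the $p>1$ and $p=1$ regimes, since the equality cases of the two underlying Minkowski-type inequalities are genuinely different (dilates versus homothets).
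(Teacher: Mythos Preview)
Your proof is correct and follows essentially the same cycle $(1)\Rightarrow(2)\Rightarrow(3)\Rightarrow(4)\Rightarrow(1)$ as the paper's argument, with the key step $(4)\Rightarrow(1)$ handled via the equality case of Theorem~\ref{p-capMinkowskiineqthm} in both versions. If anything, you are more thorough than the paper: you explicitly argue that the dilation factor must equal $1$ via $(n-\mathfrak{p})$-homogeneity of ${\rm C}_\mathfrak{p}$, whereas the paper stops at ``$K$ and $-K$ are dilates when $p>1$, or homothetic when $p=1$'' and leaves this last step implicit.
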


\begin{proof}
When $p=1$, the implication ``(1) $\Rightarrow$ (2)" is obvious.
When $p>1$, the implication ``(1) $\Rightarrow$ (2)" follows from
the facts that $\mu_\mathfrak{p}(K,\cdot)$ is even, $h_K=h_{-K}$ and
Definition \ref{DefOfLqCapMeasure}.

The implication ``(2) $\Rightarrow$ (3)" follows from Definition
\ref{DefLqMixedCap} and the fact that $h_Q(-\xi)=h_{-Q}(\xi)$ for
all $\xi\in\mathbb{S}^{n-1}$.

The implication ``(3) $\Rightarrow$ (4)" is obvious, since ${\rm
C}_{p,\mathfrak{p}}(K,-K)={\rm C}_{p,\mathfrak{p}}(K,K)={\rm
C}_\mathfrak{p}(K)$.

Assume that ${\rm C}_{p,\mathfrak{p}}(K,-K)={\rm
C}_\mathfrak{p}(K)$. From the $\mathfrak{p}$-capacitary Minkowski
inequality and the fact ${\rm C}_\mathfrak{p}(K)={\rm
C}_\mathfrak{p}(-K)$, it follows that
\[{{\rm C}_\mathfrak{p}}(K) = {{\rm C}_{p,\mathfrak{p}}}(K, - K)
\ge {{\rm C}_\mathfrak{p}}{(K)^{\frac{{n - \mathfrak{p} - p}}{{n -
\mathfrak{p}}}}}{{\rm C}_\mathfrak{p}}{( - K)^{\frac{p}{{n -
\mathfrak{p}}}}} = {{\rm C}_\mathfrak{p}}(-K).
\]
So, $K$ and $-K$ are dilates when $p>1$, or homothetic when $p=1$.
\end{proof}

\begin{corollary}\label{discreteCapMinkowskiThm3}
Suppose $1< p<\infty$,   $1<\mathfrak{p}<n$ and $n-\mathfrak{p}\neq
p$. If $\mu$ is a finite even discrete measure on $\mathbb{S}^{n-1}$
which is not concentrated on any great subsphere, then there exists
a unique origin-symmetric convex polytope $P\in\mathcal{K}^n_o$ such
that $ \mu_{p,\mathfrak{p}}(P,\cdot)=\mu$.
\end{corollary}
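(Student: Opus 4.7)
The plan is to combine Corollary \ref{discreteCapMinkowskiThm2} (existence and uniqueness for arbitrary non-even discrete measures) with Lemma \ref{characterizaion5} (the characterization of origin-symmetry in terms of evenness of $\mu_{p,\mathfrak{p}}(K,\cdot)$). The hypothesis of Corollary \ref{discreteCapMinkowskiThm3} is almost the same as that of Corollary \ref{discreteCapMinkowskiThm2}, so the real content is producing a symmetric solution.

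First I would reconcile the two hypotheses. Corollary \ref{discreteCapMinkowskiThm2} requires $\mu$ to be not concentrated on any closed hemisphere, while the hypothesis here is the weaker-looking condition that $\mu$ is not concentrated on any great subsphere. However, when $\mu$ is even these two conditions are equivalent: if $\mu$ were supported in a closed hemisphere $\{\xi:\xi\cdot e\ge 0\}$, then by evenness of $\mu$ the mass in the open hemisphere $\{\xi\cdot e>0\}$ would equal the mass in $\{\xi\cdot e<0\}$, forcing both to be zero, so that $\mu$ is actually supported on the great subsphere $\{\xi\cdot e=0\}$, a contradiction.

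Next, I would apply Corollary \ref{discreteCapMinkowskiThm2} directly to $\mu$ (which is a discrete measure not concentrated on any closed hemisphere by the previous step, with $n-\mathfrak{p}\neq p$ and $1<p<\infty$). This produces a unique polytope $P\in\mathcal{K}^n_o$ with $\mu_{p,\mathfrak{p}}(P,\cdot)=\mu$. Because the uniqueness statement of Corollary \ref{discreteCapMinkowskiThm2} is already among \emph{all} polytopes in $\mathcal{K}^n_o$, not just among origin-symmetric ones, the uniqueness part of the present corollary follows immediately — any origin-symmetric polytope solution would coincide with $P$.

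Finally, I would show that $P$ is origin-symmetric. Since $\mu$ is even by hypothesis and $\mu_{p,\mathfrak{p}}(P,\cdot)=\mu$, the $L_p$ $\mathfrak{p}$-capacitary measure of $P$ is even. Because $1<p<\infty$, Lemma \ref{characterizaion5}, in the implication $(2)\Rightarrow(1)$, yields that $P$ is origin-symmetric, completing the proof. I do not anticipate any technical obstacle here; the corollary is a clean amalgamation of the two earlier results, with the only subtlety being the translation between ``not concentrated on a great subsphere'' and ``not concentrated on a closed hemisphere'' under the evenness assumption.
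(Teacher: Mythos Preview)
Your proposal is correct and follows essentially the same approach as the paper: reduce to Corollary \ref{discreteCapMinkowskiThm2} by observing that an even measure not concentrated on a great subsphere is not concentrated on any closed hemisphere, then invoke Lemma \ref{characterizaion5} to conclude origin-symmetry from the evenness of $\mu_{p,\mathfrak{p}}(P,\cdot)$. Your justification of the hemisphere/subsphere step is actually more explicit than the paper's, which simply asserts it.
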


\begin{proof}
Since $\mu$ is even and  not concentrated on any great subsphere, it
is not concentrated on any closed hemisphere. By Corollary
\ref{discreteCapMinkowskiThm2}, there exists a unique polytope
$P\in\mathcal{K}^n_o$ such that $
\mu_{p,\mathfrak{p}}(P,\cdot)=\mu$. Since
$\mu_{p,\mathfrak{p}}(P,\cdot)$  is even, it implies that  $P$ is
origin-symmetric  by Lemma \ref{characterizaion5}.
\end{proof}

\vskip 25pt
\section{\bf  Revisiting the discrete Minkowski problem for $\mathfrak{p}$-capacity:  CNSXYZ's  problem}
\vskip 10pt

Let $\mu$ be a finite Borel measure  on the unit sphere ${\mathbb
S}^{n-1}$. Consider the following conditions.

($A_1$)  The measure $\mu$ is not concentrated on any great
subsphere.

($A_2$)  The centroid of $\mu$ is at the origin.

($A_3$)  The measure $\mu$ does not have a pair of antipodal point
masses; that is, i.e., if $\mu(\{\xi\})>0$, then $\mu(\{-\xi\})=0$,
for $\xi\in\mathbb{S}^{n-1}$.

Under these conditions, CNSXYZ \cite[pp.
1570-1572]{capacitaryBMtheory(CNSXYZ)} proved the following
important result.

\noindent \textbf{Theorem A.} \emph{Suppose $1<\mathfrak{p}<2\le n$.
If $\mu$ is a finite Borel measure on $\mathbb{S}^{n-1}$ satisfying
conditions ($A_1$)-($A_3$), then there exists a convex body $K$ in
$\mathbb{R}^n$ such that $\mu_\mathfrak{p}(K,\cdot)=\mu$.}

Conditions ($A_1$) and ($A_2$) are both necessary. They  are exactly
the same sufficient and necessary conditions as in Jerison's
solution to the Minkowski problem for electrostatic capacity
\cite{Minkproblem(Jerison)}, as well as in the Aleksandrov
\cite{Minkproblem(Aleksandrov)} and Fenchel and Jessen's
\cite{Minkproblem(FenchelJessen)} solution to the classical
Minkowski problem for the surface area measure.

CNSXYZ \cite{capacitaryBMtheory(CNSXYZ)} emphasized that ($A_3$) is
instead not a necessary condition. They  pointed out that: It would
be interesting if the assumption ($A_3$) could be removed, and it is
a very interesting \emph {open} problem to naturally extend their
result to the range $2<\mathfrak{p}<n$.

In this part, we  solve CNSXYZ's  problem for discrete measures.

\begin{theorem}
Suppose  $1<\mathfrak{p}<n$. If $\mu$ is a discrete measure on
$\mathbb{S}^{n-1}$ satisfying conditions ($A_1$) and ($A_2$), then
there exists a unique (up to a translation) polytope $P$ such that
\[\frac{\mu_\mathfrak{p}(P,\cdot)}{{\rm C}_\mathfrak{p}(P)}=\mu.\]
If in addition $\mu$ is even, then $P$ is centrally symmetric.
\end{theorem}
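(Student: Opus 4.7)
The plan is to adapt the variational scheme of Section 4 to the limiting case $p=1$, where $\mu_{1,\mathfrak{p}}(P,\cdot)=\mu_{\mathfrak{p}}(P,\cdot)$. Write $\mu=\sum_{i=1}^m c_i\delta_{\xi_i}$ with $c_i>0$; condition $(A_1)$ says the $\xi_i$ do not lie in a closed hemisphere, and $(A_2)$ reads $\sum_i c_i\xi_i=o$. For $y\in\mathbb{R}^m_*$ define $P(y)=\bigcap_i\{x:x\cdot\xi_i\le y_i\}$ as in Section 4, and consider the constrained maximization
\[
\max_{y\in\mathbb{R}_*^m}\mathrm{C}_{\mathfrak{p}}(P(y))\qquad\text{subject to}\qquad\frac{\mathfrak{p}-1}{n-\mathfrak{p}}\sum_{i=1}^m c_iy_i=1.
\]
Unlike the $p>1$ case, the feasible set is a compact simplex, and continuity of $\mathrm{C}_{\mathfrak{p}}(P(y))$ (Lemma \ref{keyLem1}) immediately supplies a maximizer $\tilde y$. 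The crucial structural remark is that translating $P$ by $x\in\mathbb{R}^n$ sends $y_i\mapsto y_i+x\cdot\xi_i$; by $(A_2)$ one has $\sum_i c_i(x\cdot\xi_i)=0$, so the linear constraint is preserved, and since $\mathrm{C}_{\mathfrak{p}}$ is translation invariant, the set of optima is closed under translation. This is exactly the origin of the ``unique up to translation'' conclusion, and it also explains why the method only pins down $P$ to that equivalence class.

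Next I would show that, within the translation orbit, one can place the origin in the interior of $P(\tilde y)$ by revisiting the deformation in Lemma \ref{keyLem3} with the linear analogue $y_t=(t,\dots,t,h_{k+1}-ct,\dots,h_m-ct)$ where $c=\sum_{i\le k}c_i/\sum_{i>k}c_i$. Here the lower bound $\mu_{\mathfrak{p}}(P,\cdot)\ge c_0^{-\mathfrak{p}}S_P$ from \cite[Lemma 2.18]{capacitaryBMtheory(CNSXYZ)} guarantees $\sum_{i\le k}\mu_{\mathfrak{p}}(P(\tilde y),\{\xi_i\})>0$, which is what forces a strict gain in $\mathrm{C}_{\mathfrak{p}}$ under deformation. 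Once $o\in\mathrm{int}\,P(\tilde y)$, each $\xi_i$ is an outer normal and $h_{P(\tilde y)}(\xi_i)=\tilde y_i$. Then Lagrange multipliers applied via Lemma \ref{keyLem4} give $c_i=s_0(\mathfrak{p}-1)\mu_{\mathfrak{p}}(P(\tilde y),\{\xi_i\})$ for some $s_0>0$; testing with weights $\tilde y_i$ and using the Poincar\'e $\mathfrak{p}$-capacity formula \eqref{PoincareFormulaPCap} together with the constraint pins down $s_0=1/\mathrm{C}_{\mathfrak{p}}(P(\tilde y))$. Hence $\mu_{\mathfrak{p}}(P(\tilde y),\cdot)/\mathrm{C}_{\mathfrak{p}}(P(\tilde y))=\mu$, establishing existence.

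For uniqueness up to translation I would mimic Lemma \ref{keylem5} but use the classical $\mathfrak{p}$-capacitary Minkowski inequality (1.7) in place of Theorem \ref{p-capMinkowskiineqthm}: if $P,Q$ both satisfy the normalized equation, substituting into ${\rm C}_{\mathfrak{p}}(P,Q)=\frac{\mathfrak{p}-1}{n-\mathfrak{p}}\int h_Q\,d\mu_{\mathfrak{p}}(P,\cdot)$ yields ${\rm C}_{\mathfrak{p}}(P,Q)=\mathrm{C}_{\mathfrak{p}}(P)$; by symmetry and Minkowski one gets $\mathrm{C}_{\mathfrak{p}}(P)=\mathrm{C}_{\mathfrak{p}}(Q)$ with equality, so $P$ and $Q$ are homothetic; the equal capacities together with the $(n-\mathfrak{p})$-homogeneity of $\mathrm{C}_{\mathfrak{p}}$ rule out any dilation factor $\ne 1$, leaving only a translation. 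The central symmetry statement is then a direct consequence: if $\mu$ is even then $-P$ also solves the equation, hence $-P=P+x_0$ for some $x_0\in\mathbb{R}^n$, i.e.\ $P$ is centrally symmetric about $-x_0/2$.

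I expect the main obstacle to be the deformation step under the linear constraint. In the $p>1$ regime the strict convexity of $\sum c_i y_i^p=1$ isolates optima, whereas here the constraint is a hyperplane and the translation orbit of any polytope sits entirely inside it; one has to verify carefully that the deformation does not simply slide $P(\tilde y)$ along its translation orbit (which would be invisible to the objective) but actually produces a genuinely new polytope with strictly larger $\mathfrak{p}$-capacity. This requires transversality of the perturbation direction to the orbit, which in turn rests on the fact that the facets with $h_i=0$ constitute a non-trivial portion of the capacitary mass, a fact secured by the Colesanti--Salani comparison $\mu_{\mathfrak{p}}\gtrsim S$ rather than by any soft convexity argument.
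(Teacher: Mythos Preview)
Your overall strategy---maximize $\mathrm{C}_\mathfrak{p}(P(y))$ over the simplex $\{\frac{\mathfrak{p}-1}{n-\mathfrak{p}}\sum c_iy_i=1\}$, apply Lagrange multipliers at an interior maximizer, and use the $\mathfrak{p}$-capacitary Minkowski inequality for uniqueness---is exactly the paper's, and your uniqueness and central-symmetry arguments are correct.

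The gap is in the deformation step. For $p>1$ the argument of Lemma~\ref{keyLem3} works because $\lim_{t\to0^+}t^{-1}\bigl((h_i^p-ct^p)^{1/p}-h_i\bigr)=0$ for $i>k$, so only the positive contributions from $i\le k$ survive. With your linear analogue those limits become $-c$, and the first-order variation of $\mathrm{C}_\mathfrak{p}(P(y_t))-\mathrm{C}_\mathfrak{p}(P(y_t),P(y))$ is
\[
\frac{\mathfrak{p}-1}{n-\mathfrak{p}}\Bigl(\sum_{i\le k}\mu_\mathfrak{p}(P(y),\{\xi_i\})-c\sum_{i>k}\mu_\mathfrak{p}(P(y),\{\xi_i\})\Bigr),
\]
which has no reason to be positive. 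So the deformation need not yield a strict gain; your concern about ``sliding along the orbit'' is only part of the difficulty.

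The paper sidesteps this entirely by using the very observation you already recorded: translation preserves both the linear constraint (via $(A_2)$) and the objective. If the maximizer $z$ has $o\in\partial P(z)$, choose $\Delta z$ with $o\in\mathrm{int}(P(z)+\Delta z)$ and set $\tilde y_i=z_i+\xi_i\cdot\Delta z$; then $P(\tilde y)=P(z)+\Delta z$, all $\tilde y_i>0$, and $\tilde y$ is a maximizer in the relative interior of the simplex. No deformation is needed. After that, Lagrange multipliers give $\mu_\mathfrak{p}(P(\tilde y),\{\xi_i\})=sc_i$; the CNSXYZ bound $\mu_\mathfrak{p}\ge c_0^{-\mathfrak{p}}S$ is invoked \emph{here} (not in a deformation) to show $s>0$, which then forces each $\xi_i$ to be a facet normal and $h_{P(\tilde y)}(\xi_i)=\tilde y_i$. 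Note this is the reverse of the order you sketched: having $o\in\mathrm{int}\,P(\tilde y)$ alone does not make every $\xi_i$ a facet normal.
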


\begin{proof}
The argument is similar to the proof of Theorem
\ref{discreteCapMinkowskiThm}, so we have to use the notations and
lemmas provided in Section 5. Represent $\mu$ as the form
$\sum_{i=1}^m c_i\delta_{\xi_i}$, where $c_1,\ldots,c_m>0$, and
$\xi_1,\ldots,\xi_m$ are unit vectors which are not contained on any
great subsphere.

We start with considering the simplex
\[S=\left\{y\in\mathbb{R}^m_*: \frac{{\mathfrak p}-1}{n-{\mathfrak p}}\sum_{i=1}^m c_iy_i=1 \right\}. \]
By Lemma \ref{keyLem1} and the compactness of $S$, the functional
${\rm C}_\mathfrak{p}(P(y))$ can attain its maximum on $S$ at a
point $z$, say $z=(z_1,\ldots,z_m)$.

If $z\notin {\rm relint} S$ (i.e., $z$ is not a relative interior
point of $S$), then at least one $z_i$ is $0$, and therefore $o\in
\partial P(z)$. Choose a nonzero $\Delta z\in\mathbb{R}^m$, such that $o\in {\rm int} (P(z)+\Delta z)$.
Let
\[\tilde{y}=(\tilde{y}_1,\ldots,\tilde{y}_m)=z+(\xi_1\cdot\Delta z, \ldots, \xi_m\cdot \Delta z).\]
Then,
\begin{align*}
P(\tilde y) &= \left\{ {x \in {\mathbb{R}^n}:{\xi _i} \cdot x \le {{\tilde y}_i},\;{\rm for}\;i = 1, \ldots ,m} \right\} \\
&= \left\{ {x \in {\mathbb{R}^n}:{\xi _i} \cdot x \le {z_i} + {\xi _i} \cdot \Delta z,\;{\rm for}\;i = 1, \ldots ,m} \right\} \\
&= \left\{ {x \in {\mathbb{R}^n}:{\xi _i} \cdot x \le {z_i},\;{\rm for}\;i = 1, \ldots ,m} \right\} + \Delta z \\
&= P(z) + \Delta z.
\end{align*}
Since $o\in {\rm int} (P(z)+\Delta z)$, it follows that
\[\tilde{y}_1>0,\ldots,\tilde{y}_m>0.\]
From $z\in S$ and the centroid of $\sum_{i=1}^mc_i\delta_{\xi_i}$ is
at the origin, it follows that
\begin{align*}
\sum\limits_{i = 1}^m {{c_i}{{\tilde y}_i}}  &= \sum\limits_{i = 1}^m {{c_i}({z_i} + {\xi _i} \cdot \Delta z)}  \\
&= \sum\limits_{i = 1}^m {{c_i}{z_i}}  + \left( {\sum\limits_{i = 1}^m {{c_i}{\xi _i}} } \right) \cdot \Delta z \\
&= \frac{{n - \mathfrak{p}}}{{\mathfrak{p} - 1}} + o \cdot \Delta z \\
&= \frac{{n - \mathfrak{p}}}{{\mathfrak{p} - 1}},
\end{align*}
i.e., $\frac{{\mathfrak{p} - 1}}{{n - \mathfrak{p}}}\sum\limits_{i =
1}^m {{c_i}{{\tilde y}_i}}  = 1$, which implies that $\tilde{y}\in
S.$ Hence, ${\rm C}_\mathfrak{p}(P(y))$  attains its maximum on $S$
at a relative interior point $\tilde{y}$.

By Lemma \ref{keyLem4} and the Lagrange multiplier theorem, there
exists a suitable constant  $s$, such that for each $i=1,\ldots,m$,
\[{\left. {\frac{{\partial \left( {\frac{{{{\rm C}_\mathfrak{p}}(P(y))}}{{\mathfrak{p} - 1}} - s\sum\limits_{i = 1}^m {{c_i}{y_i}} } \right)}}{{\partial {y_i}}}} \right|_{y = \tilde y}} = {\mu _\mathfrak{p}}(P(\tilde y),\{ {\xi _i}\} ) - s{c_i} = 0.\]
Since $P(\tilde{y})$ is $n$-dimensional and  all the $\tilde{y}_i$
are positive, there is at least one $i_0$ such that
$S_{P(\tilde{y})}(\{\xi_{i_0}\})>0$. Meanwhile, by CNSXYZ
\cite[Lemma 2.18]{capacitaryBMtheory(CNSXYZ)}, there is a positive
constant $c$ depending on $n$, $\mathfrak p$ and and the radius of a
ball containing $P(\tilde{y})$, such that
$\mu_\mathfrak{p}(P(\tilde{y}),\cdot)\ge c^{-\mathfrak{p}}
S_{P(\tilde{y})}$. So,
$\mu_\mathfrak{p}(P(\tilde{y}),\{\xi_{i_0}\})>0$, which implies that
$s>0$, and therefore $\mu_\mathfrak{p}(P(\tilde{y}),\{\xi_{i}\})>0$
for all $i$. In light of $\mu_\mathfrak{p}(P(\tilde{y}),\cdot)$ is
absolutely continuous with respect to $S_{P(\tilde{y})}$,  it
follows that $S_{P(\tilde{y})}(\{\xi_i\})>0$ for all $i$.  So, each
$\xi_i$ is an outer unit normal to the facet of $P(\tilde{y})$,  and
$h_{P(\tilde{y})}(\xi_i)=\tilde{y}_i$.

Hence,
\begin{align*}
{{\rm C}_\mathfrak{p}}(P(\tilde y)) &= \frac{{\mathfrak{p} - 1}}{{n - \mathfrak{p}}}\sum\limits_{i = 1}^m {{{\tilde y}_i}{\mu _\mathfrak{p}}(P(\tilde y),\{ {\xi _i}\} )}  \\
&= s \cdot \frac{{\mathfrak{p} - 1}}{{n - \mathfrak{p}}}\sum\limits_{i = 1}^m {{{\tilde y}_i}{c_i}}  \\
&= s.
\end{align*}
Therefore,
\[\mu  = \sum\limits_{i = 1}^m {{c_i}{\delta _{{\xi _i}}}}  = {s^{ - 1}}\sum\limits_{i = 1}^m {s{c_i}{\delta _{{\xi _i}}}}  = \frac{{\sum\limits_{i = 1}^m {\mu _\mathfrak{p}}(P(\tilde y),\{ {\xi _i}\} )}}{{{{\rm C}_\mathfrak{p}}(P(\tilde y))}}.\]
Take $P=P(\tilde{y})$. Then $P$  is a desired polytope of this
theorem.

What follows shows the uniqueness. Assume  the polytope $P'$
satisfies ${\rm
C}_\mathfrak{p}(P')^{-1}\mu_\mathfrak{p}(P',\cdot)=\mu$.  We will
show that $P$ and $P'$ differ only by a translation.

From the Poincar$\rm{\acute{e}}$ $\mathfrak{p}$-capacity formula,
the assumptions that $\mu={\rm
C}_\mathfrak{p}(P')^{-1}\mu_\mathfrak{p}(P',\cdot)$ and  $\mu={\rm
C}_\mathfrak{p}(P)^{-1}\mu_\mathfrak{p}(P,\cdot)$, the definition of
mixed $\mathfrak{p}$-capacity, and finally the
$\mathfrak{p}$-capacitary Minkowski inequality, it follows that
\begin{align*}
1 &= \frac{{\frac{{\mathfrak{p} - 1}}{{n - \mathfrak{p}}}\int_{{\mathbb{S}^{n - 1}}} {{h_{P'}}d{\mu _\mathfrak{p}}(P', \cdot )} }}{{{{\rm C}_\mathfrak{p}}(P')}} \\
&= \frac{{\mathfrak{p} - 1}}{{n - \mathfrak{p}}}\int\limits_{{\mathbb{S}^{n - 1}}} {{h_{P'}}d\mu }  \\
&= \frac{{\frac{{\mathfrak{p} - 1}}{{n - \mathfrak{p}}}\int\limits_{{\mathbb{S}^{n - 1}}} {{h_{P'}}d{\mu _\mathfrak{p}}(P, \cdot )} }}{{{{\rm C}_\mathfrak{p}}(P)}} \\
&= \frac{{{{\rm C}_\mathfrak{p}}(P,P')}}{{{{\rm C}_\mathfrak{p}}(P)}} \\
&\ge {\left( {\frac{{{\rm C}_\mathfrak{p}(P')}}{{{\rm
C}_\mathfrak{p}(P)}}} \right)^{\frac{1}{{n - \mathfrak{p}}}}}.
\end{align*}
All the above still hold, if interchanging $P$ and $P'$. So, ${\rm
C}_\mathfrak{p}(P')={\rm C}_\mathfrak{p}(P)$. By the equality
condition of the $\mathfrak{p}$-capacitary Minkowski inequality, $P$
and $P'$ differ only by a translation.

Assume that $\mu$ is even. Since $\mu={\rm
C}_\mathfrak{p}(P)^{-1}\mu_\mathfrak{p}(P,\cdot)$, it follows that
the $\mathfrak{p}$-capacitary measure $\mu_\mathfrak{p}(P,\cdot)$ is
even. By Theorem \ref{characterizaion5}, the polytope $P$ is
centrally symmetric.
\end{proof}

\vskip 25pt
\section{\bf  Two dual extremum problems for $\boldsymbol{\mathfrak{p}}$-capacity}
\vskip 10pt

Throughout this section, let $1<p<\infty$ and $1<\mathfrak{p}<n$.
Suppose that $\mu$ is a finite Borel measure on
$\mathbb{S}^{n-1}$, which is not concentrated on any closed
hemisphere. We focus on the general $L_{p}$ Minkowski problem for $\mathfrak{p}$-capacity.

\vskip 8pt

\noindent \textbf{Problem 3.}  Among all convex bodies $Q$ in
$\mathbb{R}^n$ containing the origin, find a body  to solve the following
constrained maximization problem
\[\sup\limits_Q {\rm C}_\mathfrak{p}(Q)\quad\quad {\rm subject\; to}\quad F_p(Q)=1. \]
Here,
\[F_p(Q)=\frac{\mathfrak{p}-1}{n-\mathfrak{p}}\int\limits_{\mathbb{S}^{n-1}}h_Q^pd\mu.\]

Naturally, we also consider  the dual  problem of Problem 3.

\noindent \textbf{Problem 4.}  Among all convex bodies $Q$ in
$\mathbb{R}^n$ containing the origin, find a body to solve the
following constrained minimization problem
\[\inf\limits_Q F_p(Q)\quad\quad {\rm subject\; to}\quad {\rm C}_\mathfrak{p}(Q)=1. \]

When $p=1$ and $\mathfrak{p}=2$, Problem 4 is the Minkowski problem for  classical Newtonian capacity, which was solved by Jerison
\cite{Minkproblem(Jerison)}, and Caffarelli, Jerison and Lieb
\cite{BMineqCaffarelliJerison}. When $p=1$ and $1<\mathfrak{p}<2$, Problem 4 was solved by CNSXYZ \cite{capacitaryBMtheory(CNSXYZ)}.
For $p>1$, Problem 4 is totally new.

\vskip 5pt

In Section 8, we will solve the general $L_p$ ($p>1$)  Minkowski
problem for $\mathfrak{p}$-capacity (i.e., Problem 5) with $1<\mathfrak{p}\leq 2$, under the basis of Theorem \ref{discreteCapMinkowskiThm}.
To achieve  this goal, our
strategy is first to demonstrate  the duality of Problem 3 and Problem 4, in the sense that their solutions only
differ by a scale factor. Then we  show that Problem 5  is  equivalent to Problem 3,
in the sense that their solutions are identical.

\begin{lemma}\label{lem6.1}
(1) If convex body $K$  solves Problem 3, then convex body
\[\bar{K}=\frac{K}{{\rm C}_\mathfrak{p}(K)^{\frac{1}{n-\mathfrak{p}}}}\]
solves Problem 4.

(2) If  convex body ${\bar K}$  solves Problem 4, then convex body
\[K=\frac{{\bar K}}{F_p({\bar K})^\frac{1}{p}}\]
solves Problem 3.
\end{lemma}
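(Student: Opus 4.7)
The plan is to prove both parts by combining two elementary homogeneity facts with a rescaling-and-contradiction argument. Note first that for any $s>0$ and any convex body $Q$ containing the origin,
\[
\mathrm{C}_{\mathfrak p}(sQ)=s^{n-\mathfrak p}\mathrm{C}_{\mathfrak p}(Q),\qquad F_p(sQ)=s^p F_p(Q),
\]
the latter because $h_{sQ}=s h_Q$. Consequently, the map $Q\mapsto \mathrm{C}_{\mathfrak p}(Q)^{p/(n-\mathfrak p)}/F_p(Q)$ is scale-invariant, and Problem 3 and Problem 4 are both equivalent to optimizing this scale-invariant ratio; the two scalings given in the statement are exactly the normalizations that pin down representatives on the respective constraint surfaces.

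For part (1), I would first check that $\bar K$ lies in the feasible set of Problem 4: by the homogeneity of $\mathrm{C}_{\mathfrak p}$, one has $\mathrm{C}_{\mathfrak p}(\bar K)=1$. Suppose, for contradiction, that some convex body $Q$ containing the origin with $\mathrm{C}_{\mathfrak p}(Q)=1$ satisfies $F_p(Q)<F_p(\bar K)$. Set $Q^\ast=Q/F_p(Q)^{1/p}$. Then $F_p(Q^\ast)=1$, so $Q^\ast$ is feasible for Problem 3, while
\[
\mathrm{C}_{\mathfrak p}(Q^\ast)=F_p(Q)^{-(n-\mathfrak p)/p}>F_p(\bar K)^{-(n-\mathfrak p)/p}=\mathrm{C}_{\mathfrak p}(K),
\]
where the final equality uses $F_p(K)=1$ and the definition of $\bar K$. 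This contradicts the maximality of $K$ in Problem 3, so $\bar K$ must solve Problem 4.

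Part (2) is symmetric. First, $F_p(K)=1$ by the choice of normalization. If some $Q'$ with $F_p(Q')=1$ satisfies $\mathrm{C}_{\mathfrak p}(Q')>\mathrm{C}_{\mathfrak p}(K)$, rescale to $\tilde Q=Q'/\mathrm{C}_{\mathfrak p}(Q')^{1/(n-\mathfrak p)}$, so that $\mathrm{C}_{\mathfrak p}(\tilde Q)=1$ (feasible for Problem 4) while $F_p(\tilde Q)=\mathrm{C}_{\mathfrak p}(Q')^{-p/(n-\mathfrak p)}<\mathrm{C}_{\mathfrak p}(K)^{-p/(n-\mathfrak p)}=F_p(\bar K)$, contradicting the minimality of $\bar K$.

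I do not anticipate any real obstacle here; the entire content is the observation that $\mathrm{C}_{\mathfrak p}$ and $F_p$ are positively homogeneous of different orders, so the two constrained problems are two different normalizations of one and the same scale-invariant variational problem. The only mildly subtle point is to confirm that $\bar K$ and $K$ as defined still belong to the respective admissible classes (i.e., are convex bodies containing the origin), but this is immediate since they are positive dilates of admissible bodies.
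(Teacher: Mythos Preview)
Your proof is correct and takes essentially the same approach as the paper: both arguments exploit the positive homogeneity of $\mathrm{C}_{\mathfrak p}$ and $F_p$ to rescale a putative competitor onto the other constraint surface and compare. The only cosmetic difference is that the paper writes each part as a direct chain of inequalities (showing $F_p(\bar K)\le F_p(Q)$ for every feasible $Q$), whereas you phrase the same computation as a contradiction; the underlying identities and the rescalings $Q\mapsto Q/F_p(Q)^{1/p}$ and $Q\mapsto Q/\mathrm{C}_{\mathfrak p}(Q)^{1/(n-\mathfrak p)}$ are identical.
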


\begin{proof}
(1) Assume that $K$ solves Problem 3. Let $Q$ be a convex body
containing the origin such that ${\rm C}_\mathfrak{p}(Q)=1$.
Since $F_p(K)=1$ and $F_p(\frac{Q}{F_p(Q)^{\frac{1}{p}}})=1$, we have
\begin{align*}
{F_p}\left( {\bar K} \right) = {F_p}\left( {\frac{K}{{{{\rm C}_\mathfrak{p}}{{(K)}^{\frac{1}{{n - \mathfrak{p}}}}}}}} \right) = \frac{{{F_p}(K)}}{{{{\rm C}_\mathfrak{p}}{{(K)}^{\frac{p}{{n - \mathfrak{p}}}}}}} &= \frac{1}{{{{\rm C}_\mathfrak{p}}{{(K)}^{\frac{p}{{n - \mathfrak{p}}}}}}} \\
&\le \frac{1}{{{{\rm C}_\mathfrak{p}}{{\left( {\frac{Q}{{{F_p}{{(Q)}^{\frac{1}{p}}}}}} \right)}^{\frac{p}{{n - \mathfrak{p}}}}}}} = \frac{{{F_p}(Q)}}{{{{\rm C}_\mathfrak{p}}{{\left( Q \right)}^{\frac{p}{{n - \mathfrak{p}}}}}}} = {F_p}(Q),
\end{align*}
which shows that ${\bar K}$ solves Problem 4.

(2) Assume that ${\bar K}$ solves Problem 4. Let $Q$ be a convex
body containing the origin such that $F_p(Q)=1$. Since
${\rm C}_\mathfrak{p}({\bar K})=1$ and ${\rm
C}_\mathfrak{p}(\frac{Q}{{\rm
C}_\mathfrak{p}(Q)^{\frac{1}{n-\mathfrak{p}}}})=1$, we have
\begin{align*}
{{\rm C}_\mathfrak{p}}{(K)^{\frac{p}{{n - \mathfrak{p}}}}} = \frac{{{{\rm C}_\mathfrak{p}}{{(\bar K)}^{\frac{p}{{n - \mathfrak{p}}}}}}}{{{F_p}(\bar K)}} &= \frac{1}{{{F_p}(\bar K)}} \\
&\ge \frac{1}{{{F_p}\left( {\frac{Q}{{{{\rm C}_\mathfrak{p}}{{(Q)}^{\frac{1}{{n - \mathfrak{p}}}}}}}} \right)}} = \frac{{{{\rm C}_\mathfrak{p}}{{(Q)}^{\frac{p}{{n - \mathfrak{p}}}}}}}{{{F_p}\left( Q \right)}} = {{\rm C}_\mathfrak{p}}{(Q)^{\frac{p}{{n - \mathfrak{p}}}}},
\end{align*}
which shows that $K$ solves Problem 3.
\end{proof}

\begin{lemma}\label{lem6.2}
If $\mu$ is a discrete measure, then Problem 3 and Problem 2 are identical.
\end{lemma}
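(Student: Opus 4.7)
The plan is to exploit the finite support of $\mu$ to reduce the infinite-dimensional Problem 3 to the finite-dimensional Problem 2 via the correspondence $Q\longleftrightarrow P(y)$ with $y_{i}=h_{Q}(\xi_{i})$. Writing $\mu=\sum_{i=1}^{m}c_{i}\delta_{\xi_{i}}$ gives
\[
F_{p}(Q)=\frac{\mathfrak{p}-1}{n-\mathfrak{p}}\sum_{i=1}^{m}c_{i}h_{Q}(\xi_{i})^{p},
\]
so the Problem 3-constraint depends on $Q$ only through the $m$ numbers $h_{Q}(\xi_{i})$. The basic geometric comparison I would establish first is: for any $y\in\mathbb{R}_{*}^{m}$ one has $h_{P(y)}(\xi_{i})\leq y_{i}$, and for any convex body $Q$ containing the origin with $y_{i}=h_{Q}(\xi_{i})$ one has $Q\subseteq P(y)$ (since $Q$ lies in each defining half-space of $P(y)$); monotonicity of ${\rm C}_{\mathfrak{p}}$ then gives ${\rm C}_{\mathfrak{p}}(Q)\leq{\rm C}_{\mathfrak{p}}(P(y))$.

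Next I would prove both problems have the same supremum by a two-way chase. Given $Q$ feasible for Problem 3, set $y_{i}:=h_{Q}(\xi_{i})$; then $\frac{\mathfrak{p}-1}{n-\mathfrak{p}}\sum c_{i}y_{i}^{p}=F_{p}(Q)=1$ makes $y$ feasible for Problem 2 with ${\rm C}_{\mathfrak{p}}(P(y))\geq{\rm C}_{\mathfrak{p}}(Q)$, yielding $\sup(\text{Problem 2})\geq\sup(\text{Problem 3})$. Conversely, given $y$ feasible for Problem 2, one has $F_{p}(P(y))\leq 1$, and the dilate $\lambda P(y)$ with $\lambda:=F_{p}(P(y))^{-1/p}\geq 1$ is feasible for Problem 3 with capacity $\lambda^{n-\mathfrak{p}}{\rm C}_{\mathfrak{p}}(P(y))\geq{\rm C}_{\mathfrak{p}}(P(y))$, yielding the reverse inequality. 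To match optimizers: if $\tilde y$ solves Problem 2, then $F_{p}(P(\tilde y))<1$ would, by the same dilation, furnish a Problem 3-feasible body strictly exceeding the common supremum---a contradiction---so $F_{p}(P(\tilde y))=1$ and $P(\tilde y)$ solves Problem 3. Conversely, if $K$ solves Problem 3, then $y_{i}:=h_{K}(\xi_{i})$ is Problem 2-feasible with ${\rm C}_{\mathfrak{p}}(P(y))\geq{\rm C}_{\mathfrak{p}}(K)=\sup$, so $y$ also solves Problem 2.

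The main obstacle will be the last identification, that is, upgrading ``$K\subseteq P(y)$ with equal capacity'' to ``$K=P(y)$ pointwise''. This rests on the strict monotonicity of $\mathfrak{p}$-capacity under proper inclusion of convex bodies, which I would derive from the $\mathfrak{p}$-capacitary Minkowski inequality of Section 2 together with $h_{K}\leq h_{P(y)}$ on $\mathbb{S}^{n-1}$: the chain ${\rm C}_{\mathfrak{p}}(K,P(y))\geq{\rm C}_{\mathfrak{p}}(K)^{(n-\mathfrak{p}-1)/(n-\mathfrak{p})}{\rm C}_{\mathfrak{p}}(P(y))^{1/(n-\mathfrak{p})}$ must collapse to equalities, its equality case forcing $K$ and $P(y)$ to be dilates, and the shared capacity then forcing $K=P(y)$. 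Everything else in the argument is routine bookkeeping using the homogeneity ${\rm C}_{\mathfrak{p}}(\lambda K)=\lambda^{n-\mathfrak{p}}{\rm C}_{\mathfrak{p}}(K)$ and the monotonicity of $\mathfrak{p}$-capacity.
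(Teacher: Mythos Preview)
Your approach is correct and shares the paper's core idea---replacing an arbitrary feasible $Q$ by the polytope $P(y)$ with $y_i=h_Q(\xi_i)$, which preserves the constraint value while weakly increasing capacity via $Q\subseteq P(y)$---though you supply considerably more detail (the reverse dilation step and the optimizer matching) than the paper's one-line sketch. One small correction to your final paragraph: to force equality in the Minkowski inequality you should swap the arguments and work with ${\rm C}_\mathfrak{p}(P(y),K)$ rather than ${\rm C}_\mathfrak{p}(K,P(y))$, since $h_K\le h_{P(y)}$ yields the upper bound ${\rm C}_\mathfrak{p}(P(y),K)\le{\rm C}_\mathfrak{p}(P(y))$ needed to sandwich against the Minkowski lower bound; with ${\rm C}_\mathfrak{p}(K,P(y))$ both inequalities point the same way and no collapse is forced. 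This last identification step is in any case more than the paper asks for, since uniqueness of the Problem~3 solution is handled separately via Lemma~\ref{lemma6.3}.
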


\begin{proof}
Assume that $\mu$ is a discrete measure, say, $\mu=\sum_i^m
c_i\delta_{\xi_i}$. For any convex body $Q$ containing the origin,
since $ {\scriptstyle \pmb \lbrack} h_Q|_{{\rm supp}\;\mu}
{\scriptstyle \pmb \rbrack} \supseteq Q$, it follows that ${\rm
C}_\mathfrak{p}({\scriptstyle \pmb \lbrack} h_Q|_{{\rm supp}\;\mu}
{\scriptstyle \pmb \rbrack} )\ge {\rm C}_\mathfrak{p}(Q)$. Since
$F({\scriptstyle \pmb \lbrack} h_Q|_{{\rm supp}\;\mu} {\scriptstyle
\pmb \rbrack} )=F(Q)=1$, it follows that the domain of Problem 3 can
be restricted to the class of proper convex polytopes $P(y)$
generated by
\[ P(y)=\bigcap_{i=1}^m\left\{x\in\mathbb{R}^n, x\cdot\xi_i\le y_i \right\}, \]
for $y=(y_1,\cdots,y_m)\in\mathbb{R}^m_+$.
\end{proof}

Therefore, for a discrete measure $\mu$, Problem 3 and Problem 2, even further as well as Problem 1 (i.e., the
discrete $L_p$ Minkowski problem for $\mathfrak{p}$-capacity) have the
same unique solution. A generalization of Problem 1 is as
follows.

\vskip 10pt

\noindent \textbf{Problem 5.} Among all convex bodies in
$\mathbb{R}^n$ that contain the origin, find a body $K$ such that
\[\frac{d\mu_{\mathfrak{p}}(K,\cdot)}{{\rm C}_{\mathfrak{p}}(K)}=h_K^{p-1}d\mu. \]

The equivalence between Problem 3 and Problem 5 is shown by the next lemma.

\begin{lemma}\label{lemma6.3}
Let  $1< p<\infty$  and $1<\mathfrak{p}<n$. Suppose that  $\mu$ is a
finite Borel measure on $\mathbb{S}^{n-1}$ and is not concentrated
on any closed hemisphere. Then a convex body $K$   solves Problem 3,
if and only if $K$  solves Problem 5. Moreover, if Problem 5 (or
equivalently, Problem 3) has a solution, then such solution is
unique.
\end{lemma}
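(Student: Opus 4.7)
The plan is to establish the two implications of the equivalence separately and then deduce uniqueness from the $L_p$ Minkowski inequality (Theorem \ref{p-capMinkowskiineqthm}).

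\emph{Direction Problem 5 $\Rightarrow$ Problem 3.} I would begin by noting that if $K$ satisfies $d\mu_\mathfrak{p}(K,\cdot)={\rm C}_\mathfrak{p}(K)\,h_K^{p-1}d\mu$, then Definition \ref{DefOfLqCapMeasure} rewrites this as $d\mu_{p,\mathfrak{p}}(K,\cdot)={\rm C}_\mathfrak{p}(K)\,d\mu$. Integrating $h_K^p$ against both sides and invoking ${\rm C}_{p,\mathfrak{p}}(K,K)={\rm C}_\mathfrak{p}(K)$ gives $F_p(K)=1$. Then for any competitor $Q$ with $F_p(Q)=1$, the computation
\[{\rm C}_{p,\mathfrak{p}}(K,Q)=\tfrac{\mathfrak{p}-1}{n-\mathfrak{p}}\int_{\mathbb{S}^{n-1}} h_Q^p\,d\mu_{p,\mathfrak{p}}(K,\cdot)={\rm C}_\mathfrak{p}(K)\,F_p(Q)={\rm C}_\mathfrak{p}(K),\]
combined with Theorem \ref{p-capMinkowskiineqthm}, will yield ${\rm C}_\mathfrak{p}(K)^{n-\mathfrak{p}}\ge {\rm C}_\mathfrak{p}(K)^{n-\mathfrak{p}-p}{\rm C}_\mathfrak{p}(Q)^p$, i.e., ${\rm C}_\mathfrak{p}(K)\ge {\rm C}_\mathfrak{p}(Q)$, confirming maximality.

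\emph{Direction Problem 3 $\Rightarrow$ Problem 5.} I would first show that any maximizer $K$ has $o\in{\rm int}\,K$ (so $h_K>0$ on $\mathbb{S}^{n-1}$): if $o\in\partial K$, a deformation argument in the spirit of Lemma \ref{keyLem3} --- executed for general Aleksandrov bodies rather than polytopes --- should produce a competitor with the same $F_p$ but strictly larger ${\rm C}_\mathfrak{p}$. Next, for any $f\in C(\mathbb{S}^{n-1})$, I would define $h_t:=(h_K^p+tf)^{1/p}$ (strictly positive for small $|t|$) and the associated Aleksandrov body $K_t:=[h_t]$. Since $h_{K_t}\le h_t$ pointwise,
\[F_p(K_t)\;\le\;\widetilde F_p(t):=\tfrac{\mathfrak{p}-1}{n-\mathfrak{p}}\int_{\mathbb{S}^{n-1}} h_t^p\,d\mu \;=\; 1+t\,\tfrac{\mathfrak{p}-1}{n-\mathfrak{p}}\int_{\mathbb{S}^{n-1}} f\,d\mu.\]
Introducing the scale-invariant functional $\Phi(Q):={\rm C}_\mathfrak{p}(Q)^p/F_p(Q)^{n-\mathfrak{p}}$, maximized at $K$ by hypothesis, the chain
\[\frac{{\rm C}_\mathfrak{p}(K_t)^p}{\widetilde F_p(t)^{n-\mathfrak{p}}}\;\le\;\Phi(K_t)\;\le\;\Phi(K)\]
becomes an equality at $t=0$, so $t=0$ is a local maximum of the left-hand side. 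Lemma \ref{variationalFormulaCNSXYZ}, applied with $h_t'(0)=f/(p\,h_K^{p-1})$ and also with $-f$, will supply the two-sided derivative $\tfrac{d}{dt}{\rm C}_\mathfrak{p}(K_t)\big|_{t=0}=\tfrac{\mathfrak{p}-1}{p}\int_{\mathbb{S}^{n-1}} f\,d\mu_{p,\mathfrak{p}}(K,\cdot)$. Setting the derivative of the left-hand side at $t=0$ to zero and simplifying will produce $\int_{\mathbb{S}^{n-1}} f\,d\mu_{p,\mathfrak{p}}(K,\cdot)={\rm C}_\mathfrak{p}(K)\int_{\mathbb{S}^{n-1}} f\,d\mu$ for every $f\in C(\mathbb{S}^{n-1})$, which is exactly Problem 5.

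\emph{Uniqueness and main obstacle.} If $K_1,K_2$ both solve Problem 5, then ${\rm C}_\mathfrak{p}(K_i)^{-1}\mu_{p,\mathfrak{p}}(K_i,\cdot)=\mu$, and the argument of Lemma \ref{keylem5} --- Poincar\'e's $\mathfrak{p}$-capacity formula paired with Theorem \ref{p-capMinkowskiineqthm} and its equality condition --- will force $K_1=K_2$. The principal difficulty in the whole proof is the very first step in the converse direction: establishing $o\in{\rm int}\,K$ for every Problem 3 maximizer, since otherwise $(h_K^p+tf)^{1/p}$ need not remain positive for arbitrary signed $f\in C(\mathbb{S}^{n-1})$ and the variational calculus collapses. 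I expect the hypothesis that $\mu$ is not concentrated on a closed hemisphere to be essential here, much as in Lemma \ref{keyLem3}, with the ${\rm C}_\mathfrak{p}$-improving deformation at a boundary-touching $K$ invoking the estimate of CNSXYZ \cite[Lemma 2.18]{capacitaryBMtheory(CNSXYZ)} comparing $\mu_\mathfrak{p}(K,\cdot)$ with the surface area measure $S_K$.
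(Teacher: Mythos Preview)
Your proposal identifies the right variational mechanism, but it rests entirely on a claim you flag yourself as ``the principal difficulty'' and do not prove: that every Problem~3 maximizer $K$ has $o\in{\rm int}\,K$. Without this, your argument breaks at several points. Definition~\ref{DefOfLqCapMeasure} and Theorem~\ref{p-capMinkowskiineqthm} are stated only for $K\in\mathcal{K}^n_o$, so your Problem~5 $\Rightarrow$ Problem~3 step (rewriting as $d\mu_{p,\mathfrak{p}}(K,\cdot)={\rm C}_\mathfrak{p}(K)\,d\mu$ and invoking the $L_p$ Minkowski inequality) is not available when $o\in\partial K$; likewise your two-sided perturbation $h_t=(h_K^p+tf)^{1/p}$ fails to stay positive for signed $f$, and Lemma~\ref{keylem5} for uniqueness again needs $K,L\in\mathcal{K}^n_o$. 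The deformation of Lemma~\ref{keyLem3} genuinely exploits the polytope structure (perturbing finitely many facet distances), and it is not clear how to adapt it when $\{h_K=0\}$ is an arbitrary closed subset of $\mathbb{S}^{n-1}$; indeed the paper's Theorem~\ref{generalCapMProblem} only asserts $o\in{\rm int}\,K$ when $p\ge n$, so one cannot simply assume this holds in general.

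The paper's proof is engineered precisely to avoid this obstacle. For Problem~3 $\Rightarrow$ Problem~5 it perturbs by $h_K+tf$ with $f\ge 0$ and $t\ge 0$ only, obtaining from the one-sided condition that $\int f\,d\mu_\mathfrak{p}(K,\cdot)\le{\rm C}_\mathfrak{p}(K)\int f h_K^{p-1}\,d\mu$ for every nonnegative $f$; equality of the two measures then follows because testing against $f=h_K$ gives equality (Poincar\'e formula versus $F_p(K)=1$). For Problem~5 $\Rightarrow$ Problem~3 it does \emph{not} use Theorem~\ref{p-capMinkowskiineqthm}: instead it splits the integral over $\{h_K>0\}$ and $\{h_K=0\}$, applies Jensen's inequality with the probability measure $\tfrac{\mathfrak{p}-1}{(n-\mathfrak{p}){\rm C}_\mathfrak{p}(K)}h_K\,d\mu_\mathfrak{p}(K,\cdot)$ on the first set, and then invokes only the classical inequality ${\rm C}_\mathfrak{p}(K,Q)^{n-\mathfrak{p}}\ge{\rm C}_\mathfrak{p}(K)^{n-\mathfrak{p}-1}{\rm C}_\mathfrak{p}(Q)$, which holds for all $K\in\mathcal{K}^n$. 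Uniqueness is then read off from the equality case of this chain (yielding $K$ and $L$ homothetic, hence translates), followed by a direct argument using the hemisphere hypothesis on $\mu$ to kill the translation --- not via Lemma~\ref{keylem5}. If you could actually establish $o\in{\rm int}\,K$ for every Problem~3 maximizer, your route through Theorem~\ref{p-capMinkowskiineqthm} would be shorter; but that is a substantial claim left open here.
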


\begin{proof}
First, assume that $K$ solves Problem 3. We  prove that $K$ also
solves Problem 5.

Let $f\in C(\mathbb{S}^{n-1})$ be nonnegative.  For $t\ge 0$, let
\[K_t={\scriptstyle \pmb
\lbrack} h_K+tf {\scriptstyle \pmb \rbrack} \quad {\rm and }\quad
F_p(h_K+tf)=\frac{{\mathfrak p}-1}{n- {\mathfrak
p}}\int\limits_{\mathbb{S}^{n-1}}(h_K+tf)^pd\mu.\] Then,
$F_p(h_K+tf)\ge F_p(K_t)$. Since $K$ solves Problem 3, and
$F_p(K_t)^{-\frac{1}{p}}K_t$ satisfies the constraint in Problem 3,
it follows that for $t\ge 0$,
\[G(t):={\rm C}_{\mathfrak p}\left( \frac{K_t}{F_p(h_K+t f)^{\frac{1}{p}}} \right)\le {\rm C}_{\mathfrak p}(K).\]
Clearly,  $G(t)$ is continuous in $t\ge 0$, and $G(0)={\rm
C}_{\mathfrak p}(K)$. Since
 \[{\left. {\frac{{d{F_p}({h_K} + tf)}}{{dt}}} \right|_{t = {0^ + }}} = \frac{{p(\mathfrak{p} - 1)}}{{n - \mathfrak{p}}}\int\limits_{{\mathbb{S}^{n - 1}}} {fh_K^{p - 1}d\mu } \]
and
\[{\left. {\frac{{d{{\rm{C}}_{\mathfrak{p}}}({K_t})}}{{dt}}} \right|_{t = {0^ + }}} = (\mathfrak{p} - 1)\int\limits_{{\mathbb{S}^{n - 1}}} {fd{\mu _{\mathfrak{p}}}(K, \cdot )} ,\]
it follows  that
\[0=G'_+(0)=(\mathfrak{p} - 1)\int\limits_{{\mathbb{S}^{n - 1}}} {fd{\mu _\mathfrak{p}}(K, \cdot )}  - (\mathfrak{p} - 1){{\rm{C}}_\mathfrak{p}}(K)\int\limits_{{\mathbb{S}^{n - 1}}} {fh_K^{p - 1}d\mu } .\]
Thus,
\[\int\limits_{{\mathbb{S}^{n - 1}}} {fh_K^{p - 1}d\mu }  = \frac{1}{{{{\rm{C}}_\mathfrak{p}}(K)}}\int\limits_{{\mathbb{S}^{n - 1}}} {fd{\mu _\mathfrak{p}}(K, \cdot )} .\]
That is, the above equality  holds for any nonnegative $f\in
C(\mathbb{S}^{n-1})$. Therefore, it also holds for any  $f \in
C(\mathbb{S}^{n-1})$, which concludes that ${\rm C}_{\mathfrak
p}(K)^{-1}d\mu_{\mathfrak{p}}(K,\cdot)=h_K^{p-1}d\mu$.

Conversely, assume that  $K$ solves Problem 5.  Let $Q$ be a convex
body containing the origin, such that $1=\frac{{\mathfrak
p}-1}{n-{\mathfrak p}}\int_{\mathbb{S}^{n-1}}h_Q^pd\mu$. Our aim is
to  prove that ${\rm C}_\mathfrak{p}(K)\ge {\rm C}_\mathfrak{p}(Q)$.
That is,   $K$ also solves Problem 3.

Using the condition that ${\rm C}_\mathfrak{p}(K)h_K^{p-1}d\mu
=d\mu_\mathfrak{p}(K,\cdot)$, we have
\begin{align*}
1 &= \frac{{\mathfrak{p} - 1}}{{n - \mathfrak{p}}}\int\limits_{\{ {h_K} > 0\} } {h_Q^pd\mu }  + \frac{{\mathfrak{p} - 1}}{{n - \mathfrak{p}}}\int\limits_{\{ {h_K} = 0\} } {h_Q^pd\mu }  \\
&\ge \frac{{\mathfrak{p} - 1}}{{n - \mathfrak{p}}}\int\limits_{\{ {h_K} > 0\} } {h_Q^pd\mu }  \\
&= \frac{{\mathfrak{p} - 1}}{{n - \mathfrak{p}}}\int\limits_{\{ {h_K} > 0\} } {{{\left( {\frac{{{h_Q}}}{{{h_K}}}} \right)}^p}\frac{{{h_K}}}{{{{\rm C}_\mathfrak{p}}(K)}}d{\mu _\mathfrak{p}}(K, \cdot )}.
\end{align*}

From the Poincar$\rm{\acute{e}}$ $\mathfrak p$-capacity formula, it follows that
\[{{\rm C}_\mathfrak{p}}(K) = \frac{{\mathfrak{p} - 1}}{{n - \mathfrak{p}}}\int\limits_{\{ {h_K} > 0\} } {{h_K}d{\mu _\mathfrak{p}}(K, \cdot )}.\]
So, the measure $\frac{\mathfrak{p}-1}{(n-\mathfrak{p}){\rm
C}_\mathfrak{p}(K)}h_Kd\mu_\mathfrak{p}(K,\cdot)$ is a Borel
probability measure on the set $\{h_K\neq 0\}$. From the  Jensen
inequality,  we have
\begin{align*}
1 &\ge {\left( {\frac{{\mathfrak{p} - 1}}{{(n - \mathfrak{p}){{\rm C}_\mathfrak{p}}(K)}}\int\limits_{\{ {h_K} > 0\} } {{{\left( {\frac{{{h_Q}}}{{{h_K}}}} \right)}^p}{h_K}d{\mu _\mathfrak{p}}(K, \cdot )} } \right)^{\frac{1}{p}}} \\
&\ge \frac{{\mathfrak{p} - 1}}{{(n - \mathfrak{p}){{\rm C}_\mathfrak{p}}(K)}}\int\limits_{\{ {h_K} > 0\} } {\frac{{{h_Q}}}{{{h_K}}}{h_K}d{\mu _\mathfrak{p}}(K, \cdot )}  \\
&= \frac{{\mathfrak{p} - 1}}{{(n - \mathfrak{p}){{\rm C}_\mathfrak{p}}(K)}}\int\limits_{\{ {h_K} > 0\} } {{h_Q}d{\mu _\mathfrak{p}}(K, \cdot )}.
\end{align*}
Furthermore, from the $\mathfrak{p}$-capacitary Minkowski
inequality, we have
\begin{align*}
1 &\ge  \frac{{{{\rm{C}}_\mathfrak{p}}(K,Q)}}{{{{\rm{C}}_\mathfrak{p}}(K)}} - \frac{{\mathfrak{p} - 1}}{{(n - \mathfrak{p}){{\rm C}_\mathfrak{p}}(K)}}\int\limits_{\{ {h_K} = 0\} } {{h_Q}d{\mu _\mathfrak{p}}(K, \cdot )}  \\
&\ge {\left( {\frac{{{{\rm{C}}_\mathfrak{p}}(Q)}}{{{{\rm{C}}_\mathfrak{p}}(K)}}} \right)^{\frac{1}{{n - \mathfrak{p}}}}} - \frac{{\mathfrak{p} - 1}}{{(n - \mathfrak{p}){{\rm C}_\mathfrak{p}}(K)}}\int\limits_{\{ {h_K} = 0\} } {{h_Q}d{\mu _\mathfrak{p}}(K, \cdot )} .
\end{align*}
By the condition that ${\rm C}_\mathfrak{p}(K)h_K^{p-1}d\mu =d\mu_\mathfrak{p}(K,\cdot)$, it follows   that
 \[\int\limits_{\{ {h_K} = 0\} } {{h_Q}d{\mu _\mathfrak{p}}(K, \cdot )}  = \int\limits_{\{ {h_K} = 0\} } {{h_Q}h_K^{p - 1}d\mu }  = 0.\]
Thus,
\[1 \ge {\left( {\frac{{{{\rm{C}}_\mathfrak{p}}(Q)}}{{{{\rm{C}}_\mathfrak{p}}(K)}}} \right)^{\frac{1}{{n - \mathfrak{p}}}}},\]
as desired.

It remains to prove that if $K$ and $L$ are  solutions to Problem 5,
then $K=L$. From the above argument and the equality condition of
the $\mathfrak{p}$-capacitary Minkowski inequality, we see that $K$
and $L$ are homothetic, so that ${\rm C}_{\mathfrak p}(K)={\rm
C}_{\mathfrak p}(L)$. In other words, $K=L+x$, for some
$x\in\mathbb{R}^n$. From the translation invariance of
$\mathfrak{p}$-capacitary measure and the assumptions, it follows
that $${\left( {{h_L}(\xi ) + x \cdot \xi } \right)^{p - 1}}d\mu
(\xi ) = {h_L}{(\xi )^{p - 1}}d\mu (\xi ).$$ In other words,
\begin{equation}\label{contradiction2}
{\left( {{h_L}(\xi ) + x \cdot \xi } \right)^{p - 1}} = {h_L}{(\xi
)^{p - 1}},\quad {\rm for}\; \mu \rm{-almost\; all}\;
\xi\in\mathbb{S}^{n-1}.
\end{equation}
Note that $\mu$ is not concentrated on any closed hemisphere. If $x$
is nonzero, then on the open hemisphere
$U:=\{\xi\in\mathbb{S}^{n-1}: x\cdot\xi>0\}$, we have $\mu(U)>0$ and
${\left( {{h_L}(\xi ) + x \cdot \xi } \right)^{p - 1}} > {h_L}{(\xi
)^{p - 1}}$, for all $\xi\in U$, which contradicts
(\ref{contradiction2}). Hence, $K=L$.

The proof is complete.
\end{proof}

By now,  we propose 5 related problems in variant disguises. For convenience, it is necessary to summarize their relationship here.

(1).  Problem 1 and Problem 2 are proposed  exclusively for \emph {discrete measures}. They have the identical unique solution;

(2).  Problem 3 and Problem 4 are \emph{dual} each other. Their solutions only  differ by a scale factor.
For discrete measures, Problem 3 and Problem 2 are identical.

(3).  Problem 5  generalizes  Problem 1 to \emph{general measures}.

(4).  Problem 5 and Problem 3  are equivalent. They have the identical unique solution.

\vskip 25pt
\section{\bf Several useful lemmas for Section 8}
\vskip 10pt

In light of the equivalence of Problem 3 and Problem 5, we will solve Problem 5 in Section 8 via the passage by  firstly  solving Problem 3.
For this aim, we have to make more preparatory works.
Throughout this section, let
$1<p<\infty$ and $1<\mathfrak{p}<n$.

Suppose that $\mu$ and $\mu_j$,
$j\in\mathbb{N}$, are finite Borel measures on
$\mathbb{S}^{n-1}$ and  not concentrated on any closed
hemisphere.  For each $j$,  assume that $K_j$ is the solution to Problem 5 for $\mu_j$.

Let
\[\bar{K}_j=\frac{K_j}{{\rm C}_\mathfrak{p}(K_j)^{\frac{1}{n-\frak{p}}}}.\]
From Lemma \ref{lemma6.3} and Lemma \ref{lem6.1} (1), it  implies that
$\bar{K_j}$ is the solution to Problem 4 for $\mu_j$.

For a  convex body $Q$ in  $\mathbb{R}^n$ containing the origin, let
\[F_{p,j}(Q)=\frac{\mathfrak{p}-1}{n-\mathfrak{p}}\int\limits_{\mathbb{S}^{n-1}}h_Q^pd\mu_j\quad {\rm and}\quad F_p(Q)=
\frac{\mathfrak{p}-1}{n-\mathfrak{p}}\int\limits_{\mathbb{S}^{n-1}}h_Q^pd\mu. \]

\begin{lemma}\label{lemma7.1}
If $\{\mu_j\}_j$ converges weakly to $\mu$, then $\{K_j\}_j$ and
$\{\bar{K}_j\}_j$ are  bounded from above.
\end{lemma}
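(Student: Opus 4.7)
The plan is to prove both boundedness statements by a single contradiction argument, exploiting the fact that each body is forced (by its extremality) to make the functional $F_{p,j}$ small or finite, while unboundedness of a convex body containing the origin would make $F_{p,j}$ blow up along directions of mass of the limit measure.

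First I would set up the relevant extremality for each sequence. By Lemma \ref{lemma6.3}, each $K_j$ solves Problem 3 for $\mu_j$, so in particular $F_{p,j}(K_j)=1$; by Lemma \ref{lem6.1} (1), each $\bar K_j$ solves Problem 4 for $\mu_j$, so ${\rm C}_\mathfrak{p}(\bar K_j)=1$ and $F_{p,j}(\bar K_j)\le F_{p,j}(Q)$ for every admissible competitor $Q$. To bound $F_{p,j}(\bar K_j)$ from above, I would plug in the fixed ball $B_0$ of capacity $1$ as a competitor: this gives $F_{p,j}(\bar K_j)\le r_0^p\,\frac{\mathfrak{p}-1}{n-\mathfrak{p}}\,\mu_j(\mathbb{S}^{n-1})$, and since $\mu_j\to\mu$ weakly (testing against the continuous function $1$), the total mass $\mu_j(\mathbb{S}^{n-1})$ is bounded. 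Combined with the trivial bound $F_{p,j}(K_j)=1$, both sequences thus satisfy a uniform upper bound on $F_{p,j}$ of the body.

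Next I would argue by contradiction. Suppose, along a subsequence, $\mathrm{diam}(\bar K_j)\to\infty$ (the argument for $K_j$ is identical). Because $o\in\bar K_j$, there exist $x_j\in\bar K_j$ with $|x_j|\to\infty$; passing to a further subsequence, $x_j/|x_j|\to u\in\mathbb{S}^{n-1}$. By convexity $[o,x_j]\subseteq \bar K_j$, hence
\[
h_{\bar K_j}(\xi)\ge |x_j|\,(u_j\cdot\xi)_+,\qquad u_j:=x_j/|x_j|,\ \xi\in\mathbb{S}^{n-1}.
\]
Since $\mu$ is not concentrated on the closed hemisphere $\{u\cdot\xi\le 0\}$, there exists $\varepsilon>0$ with $\mu(U_\varepsilon)>0$, where $U_\varepsilon:=\{\xi:u\cdot\xi>\varepsilon\}$ is open. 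By the portmanteau theorem (weak convergence bounds open sets from below), $\liminf_{j\to\infty}\mu_j(U_\varepsilon)\ge \mu(U_\varepsilon)>0$, so $\mu_j(U_\varepsilon)\ge c>0$ for large $j$. Since $u_j\to u$, for $j$ large enough we also have $u_j\cdot\xi\ge\varepsilon/2$ on $U_\varepsilon$, hence $h_{\bar K_j}(\xi)\ge |x_j|\varepsilon/2$ on $U_\varepsilon$. Thus
\[
F_{p,j}(\bar K_j)\ge \frac{\mathfrak{p}-1}{n-\mathfrak{p}}\,(|x_j|\varepsilon/2)^p\,\mu_j(U_\varepsilon)\longrightarrow\infty,
\]
which contradicts the uniform upper bound from the previous paragraph. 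The same lower bound applied with $K_j$ in place of $\bar K_j$ contradicts $F_{p,j}(K_j)=1$.

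The main obstacle is ensuring that weak convergence $\mu_j\to\mu$ genuinely forces mass to accumulate on $U_\varepsilon$ for $\mu_j$, rather than only for $\mu$; this is exactly where the portmanteau direction ``open sets, $\liminf$'' is needed, together with the hypothesis that $\mu$ is not concentrated on any closed hemisphere. Everything else is bookkeeping: the uniform upper bounds on $F_{p,j}(\bar K_j)$ and $F_{p,j}(K_j)$, and the convexity estimate $h_{\bar K_j}(\xi)\ge |x_j|(u_j\cdot\xi)_+$, are routine. Note that this argument does not yet produce a lower bound on either sequence, but the statement of the lemma only asks for boundedness from above, so the plan above suffices.
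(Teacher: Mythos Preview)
Your argument is correct. The core geometric idea is the same as the paper's: from $o\in K_j$ and a farthest point you get $h_{K_j}(\xi)\ge R_j(u_j\cdot\xi)_+$, and then the constraint on $F_{p,j}$ forces $R_j$ to stay bounded because the $\mu_j$ retain positive mass on a cap.

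The execution differs in two places. For $\{K_j\}_j$, the paper packages the cap integrals as the support function of an auxiliary convex body $\Pi_{\mathfrak{p},p}\mu_j\in\mathcal{K}^n_o$ with $h_{\Pi_{\mathfrak{p},p}\mu_j}(x)^p=\frac{\mathfrak{p}-1}{n-\mathfrak{p}}\int(x\cdot\xi)_+^p\,d\mu_j$, and then uses that pointwise convergence of support functions is automatically uniform to get a uniform lower bound $\min_{\mathbb{S}^{n-1}}h_{\Pi_{\mathfrak{p},p}\mu_j}\ge m>0$; this yields the explicit bound $\max h_{K_j}\le 1/m$ directly, without contradiction or portmanteau. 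Your contradiction--plus--portmanteau route is more elementary and avoids introducing the $\Pi$-body, at the cost of passing to subsequences and not producing an explicit constant. For $\{\bar K_j\}_j$, the paper argues indirectly: it uses a ball competitor in Problem~3 to bound ${\rm C}_\mathfrak{p}(K_j)$ from below (via $|\mu_j|\to|\mu|$) and then scales, $\bar K_j=K_j/{\rm C}_\mathfrak{p}(K_j)^{1/(n-\mathfrak{p})}$. You instead use a ball competitor in Problem~4 to bound $F_{p,j}(\bar K_j)$ from above and re-run the blow-up. These are dual and equally valid; your version has the small advantage of treating both sequences by a single mechanism.
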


\begin{proof}
For each $j$,
there is a $\xi_j\in\mathbb{S}^{n-1}$ such that
$h_{K_j}(\xi_j)=\max_{\mathbb{S}^{n-1}}h_{K_j}$. Since the segment
joining the origin and
$(\max_{\mathbb{S}^{n-1}}h_{K_j})\xi_j$ is contained in $K_j$, it
follows that for all $\xi\in\mathbb{S}^{n-1}$,
\[(\max_{\mathbb{S}^{n-1}}h_{K_j})(\xi_j\cdot\xi)_+\le h_{K_j}(\xi),\] where $(\xi_j\cdot\xi)_+=\max\{0, \xi_j\cdot\xi\}$.
Thus,
\begin{align*}
1 &= \frac{\mathfrak{p}-1}{n-\mathfrak{p}}\int\limits_{{\mathbb{S}^{n - 1}}} {h_{{K_j}}^pd{\mu _j}}  \\
&\ge {({\max _{{\mathbb{S}^{n - 1}}}}{h_{{K_j}}})^p}\frac{\mathfrak{p}-1}{n-\mathfrak{p}}\int\limits_{{\mathbb{S}^{n - 1}}} {{{({\xi _j} \cdot \xi )}_+^p }d{\mu _j}(\xi )}  \\
&\ge {({\max _{{\mathbb{S}^{n - 1}}}}{h_{{K_j}}})^p}\frac{\mathfrak{p}-1}{n-\mathfrak{p}}{\min _{\xi ' \in {\mathbb{S}^{n - 1}}}}\int\limits_{{\mathbb{S}^{n - 1}}} {{{(\xi ' \cdot \xi )}_+^p }d{\mu _j}(\xi )} .
\end{align*}

Consider the functional $\mathbb{R}^{n}\to \mathbb{R}$,
\[x \mapsto {\left( {\frac{{\mathfrak{p} - 1}}{{n - \mathfrak{p}}}\int\limits_{{\mathbb{S}^{n - 1}}} {(x \cdot \xi )_ + ^pd{\mu _j}(\xi )} } \right)^{\frac{1}{p}}}.\]
Since $((x+x')\cdot\xi)_+\le (x\cdot\xi)_++(x'\cdot\xi)_+$, by the Minkowski integral inequality, it
implies that this functional is convex. Since $\mu_j$ is not concentrated on
any closed hemisphere, this functional is strictly positive for any
nonzero $x$. Thus, this functional is the support function of a
unique convex body, say $\Pi_{\mathfrak{p},p}\mu_j\in\mathcal{K}^n_o$.
So,
$\min_{\mathbb{S}^{n-1}}h_{\Pi_{\mathfrak{p},p}\mu_j}>0$
and
\[{\max _{{\mathbb{S}^{n - 1}}}}{h_{{K_j}}} \le \frac{1}{{{{\min }_{{\mathbb{S}^{n - 1}}}}{h_{{\Pi_{\mathfrak{p},p}\mu_j}}}}} < \infty .\]

Similarly,  define the convex body
$\Pi_{\mathfrak{p},p}\mu\in\mathcal{K}^n_o$ by
\[{h_{{\Pi _{\mathfrak{p},p}}\mu }}(x) = {\left( {\frac{{\mathfrak{p} - 1}}{{n - \mathfrak{p}}}\int\limits_{{\mathbb{S}^{n - 1}}} {(x \cdot \xi )_ + ^pd{\mu }(\xi )} } \right)^{\frac{1}{p}}}.\]

Since the weak convergence $\mu_j\to\mu$ yields the pointwise convergence
$h_{\Pi_{\mathfrak{p},p}\mu_j}\to h_{\Pi_{\mathfrak{p},p}\mu}$ on
$\mathbb{S}^{n-1}$, and  the pointwise convergence of support
functions on $\mathbb{S}^{n-1}$ is also a uniform convergence,  it follows that
the sequence $\{h_{\Pi_{\mathfrak{p},p}\mu_j}\}_j$ on
$\mathbb{S}^{n-1}$ is uniformly bounded from below by a constant
$m>0$. So, we have
\[\sup_j {\left\{ {{{\max }_{{\mathbb{S}^{n - 1}}}}{h_{{K_j}}}} \right\}} \le \frac{1}{{\inf_j {{\left\{ {{{\min }_{{\mathbb{S}^{n - 1}}}}{h_{{\Pi _{\mathfrak{p},p}}{\mu _j}}}} \right\}}}}} \le \frac{1}{m} < \infty ,\]
which implies that $\{K_j\}_j$ is bounded from above.

To prove that $\{\bar{K}_j\}_j=\{\frac{K_j}{{\rm
C}_\mathfrak{p}(K_j)^{\frac{1}{n-\mathfrak{p}}}}\}_j$ is also bounded from above, two
observations are in order. First, by the
fact that $F_{p,j}\left(( \frac{\mathfrak{p} - 1}{n - \mathfrak{p}}|\mu _j| )^{{-1}/{p}} B \right) = 1 $, where $|\mu_j|$
denotes the total mass of $\mu_j$, the ball $( \frac{\mathfrak{p} -
1}{n - \mathfrak{p}}|\mu _j| )^{{-1}/{p}} B$ satisfies the
constraint in Problem 3 for $\mu_j$. Thus,
\[{\rm C}_\mathfrak{p}(K_j)\ge {\rm C}_\mathfrak{p}\left((
\frac{\mathfrak{p} - 1}{n - \mathfrak{p}}|\mu _j| )^{\frac{-1}{p}} B\right).\]
Second, the weak convergence $\mu_j\to\mu$ yields the convergence
$|\mu_j|\to |\mu|$, which implies that
\[\sup_j\{|\mu_j|\}<\infty.\]

So,
\begin{align*}
{\max _{{\mathbb{S}^{n - 1}}}}{h_{{{\bar K}_j}}} &= \frac{{{{\max }_{{\mathbb{S}^{n - 1}}}}{h_{{K_j}}}}}{{{\rm
C}_\mathfrak{p}{{({K_j})}^{\frac{1}{{n- \mathfrak{p}}}}}}}\\
& \le \frac{{{{\max }_{{\mathbb{S}^{n - 1}}}}{h_{{K_j}}}}}{{{\rm C}_\mathfrak{p}{{\left( \left(
\frac{\mathfrak{p} - 1}{n - \mathfrak{p}}|\mu _j| \right)^{\frac{-1}{p}} B \right)}^{\frac{1}{{n - \mathfrak{p}}}}}}}\\
& = \frac{(\mathfrak{p}-1)^{\frac{1}{p}}{|{\mu_j}|^{\frac{1}{p}}{{\max }_{{\mathbb{S}^{n - 1}}}}{h_{{K_j}}}}}{(n-\mathfrak{p})^{\frac{1}{p}}{{\rm
C}_\mathfrak{p}{{\left( B \right)}^{\frac{1}{{n -\mathfrak{p}}}}}}}\\
&\le
M:=\left(\frac{\mathfrak{p}-1}{n-\mathfrak{p}}\right)^{\frac{1}{p}}
{\rm C}_\mathfrak{p}(B)^{\frac{-1}{n-\mathfrak{p}}}
\sup_j\{|\mu_j|\}^{\frac{1}{p}} \sup_j\{\max_{\mathbb{S}^{n-1}}h_{K_j}\}\\
&<\infty,
\end{align*}
which concludes that $\{\bar{K}_j\}_j$ is bounded from above.
\end{proof}

\begin{lemma}\label{lemma7.2}
If $\{\bar{K}_j\}_j$ converges to a compact convex set $\bar{K}$, then $\dim(\bar{K})\neq n-1$.
\end{lemma}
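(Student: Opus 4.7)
The plan is to argue by contradiction. Assume $\dim\bar{K}=n-1$. Since $o\in\bar{K}_j$ for every $j$ and Hausdorff convergence preserves containment, $o\in\bar{K}$, and $\bar{K}$ lies in a hyperplane through $o$, which we write as $\bar{K}\subseteq\xi_0^{\perp}$ for some $\xi_0\in\mathbb{S}^{n-1}$. In particular $h_{\bar{K}}(\pm\xi_0)=0$, while $\mathcal{H}^{n-1}(\bar{K})>0$.

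The decisive step is a weak-limit form of the Euler--Lagrange identity satisfied by each $\bar{K}_j$. Since $K_j$ solves Problem~5 for $\mu_j$ and $\bar{K}_j=K_j/{\rm C}_{\mathfrak{p}}(K_j)^{1/(n-\mathfrak{p})}$, the homogeneities of ${\rm C}_{\mathfrak{p}}$, $h_K$, and $\mu_{\mathfrak{p}}$ give
\[
d\mu_{\mathfrak{p}}(\bar{K}_j,\cdot)=\lambda_j^p\,h_{\bar{K}_j}^{p-1}\,d\mu_j,\qquad\lambda_j:={\rm C}_{\mathfrak{p}}(K_j)^{1/(n-\mathfrak{p})},
\]
and the Poincar\'e $\mathfrak{p}$-capacity formula yields $\lambda_j^p F_{p,j}(\bar{K}_j)=1$. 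Testing Problem~4 for $\mu_j$ against a fixed ball $B_0$ with ${\rm C}_{\mathfrak{p}}(B_0)=1$ gives $F_{p,j}(\bar{K}_j)\le F_{p,j}(B_0)$, which stays bounded because $|\mu_j|\to|\mu|$; Lemma~7.1 also bounds ${\rm C}_{\mathfrak{p}}(K_j)$ from above. Thus $\lambda_j$ lies between positive constants and, along a subsequence, $\lambda_j\to\lambda\in(0,\infty)$. Combined with $h_{\bar{K}_j}\to h_{\bar{K}}$ uniformly and $\mu_j\rightharpoonup\mu$, the displayed identity produces
\[
\mu_{\mathfrak{p}}(\bar{K}_j,\cdot)\;\rightharpoonup\;\nu:=\lambda^p\,h_{\bar{K}}^{p-1}\,d\mu\quad\text{on }\mathbb{S}^{n-1}.
\]

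For the open neighborhood $U_\epsilon:=\{\xi\in\mathbb{S}^{n-1}:|\xi\cdot\xi_0|>1-\epsilon\}$ of $\{\pm\xi_0\}$, the Portmanteau theorem applied to the closed set $\overline{U_\epsilon}$ delivers the upper bound
\[
\limsup_{j\to\infty}\mu_{\mathfrak{p}}(\bar{K}_j,\overline{U_\epsilon})\le\nu(\overline{U_\epsilon})=\lambda^p\int_{\overline{U_\epsilon}}h_{\bar{K}}^{p-1}\,d\mu\;\longrightarrow\;0\quad\text{as }\epsilon\to 0^+,
\]
using continuity of $h_{\bar{K}}$ and $h_{\bar{K}}(\pm\xi_0)=0$. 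On the other hand, CNSXYZ's Lemma~2.18 (already invoked in Section~5) furnishes a constant $c^{*}>0$, depending only on $n$, $\mathfrak{p}$, and the uniform diameter bound on $\bar{K}_j$ from Lemma~7.1, with $\mu_{\mathfrak{p}}(\bar{K}_j,\cdot)\ge(c^{*})^{-\mathfrak{p}}S_{\bar{K}_j}$ for every $j$. Weak continuity of the surface area measure under Hausdorff convergence, the Portmanteau inequality on the open set $U_\epsilon$, and the identity $S(\bar{K},\{\pm\xi_0\})=2\mathcal{H}^{n-1}(\bar{K})$ for the flat body $\bar{K}\subseteq\xi_0^{\perp}$ yield
\[
\liminf_{j\to\infty}\mu_{\mathfrak{p}}(\bar{K}_j,U_\epsilon)\ge 2(c^{*})^{-\mathfrak{p}}\mathcal{H}^{n-1}(\bar{K})>0
\]
for every $\epsilon>0$, contradicting the vanishing upper bound once $\epsilon$ is chosen small.

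I expect the main obstacle to be justifying the weak convergence $\mu_{\mathfrak{p}}(\bar{K}_j,\cdot)\rightharpoonup\nu$; this rests on the two-sided control $\lambda_j\in[c_1,c_2]\subset(0,\infty)$, which in turn draws both on the $K_j$-side upper bound from Lemma~7.1 and on the Problem~4 test-ball bound on $F_{p,j}(\bar{K}_j)$. A secondary point is to verify that CNSXYZ's Lemma~2.18 applies to the $\bar{K}_j$ with a constant uniform in $j$, which follows from the explicit dependence of that constant on the diameter and from Lemma~7.1.
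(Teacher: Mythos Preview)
Your proof is correct and follows essentially the same route as the paper: both use the Euler--Lagrange identity for $\bar{K}_j$ together with CNSXYZ's Lemma~2.18 to compare $h_{\bar{K}}^{p-1}d\mu$ with the limit surface area measure, and derive the contradiction from $h_{\bar{K}}(\pm\xi_0)=0$ versus $S_{\bar{K}}(\{\pm\xi_0\})>0$. The paper's version is slightly more direct---it passes the single inequality $h_{\bar{K}_j}^{p-1}d\mu_j \ge c^{-\mathfrak{p}}F_{p,j}(\bar{K}_j)\,dS_{\bar{K}_j}$ to the limit in one step and then evaluates at the singleton $\{\pm\xi_0\}$ (needing the separate argument that $F_p(\bar{K})>0$, which plays the same role as your test-ball lower bound on $\lambda_j$)---whereas you keep the upper and lower bounds separate via Portmanteau on shrinking neighborhoods.
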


\begin{proof}
Recall that $\bar{K_j}$ is the solution to Problem 4 for $\mu_j$. By Lemma  \ref{lemma6.3} and Lemma \ref{lem6.1} (2), $K_j
=F_{p,j}(\bar{K}_j)^{-1/p}\bar{K}_j$ is the solution to Problem 5  for $\mu_j$. Since ${\rm
C}_\mathfrak{p}(K_j)h_{K_j}^{p-1}d\mu_{j}=d\mu_\mathfrak{p}(K_j,\cdot)$,
it follows that
\[{\rm C}_\mathfrak{p}\left( {\frac{{{{\bar K}_{j}}}}{{{F_{p,j}}{{\left( {{{\bar K}_{j}}} \right)}^{\frac{1}{p}}}}}} \right)h_{\frac{{{{\bar K}_{j}}}}{{{F_{p,j}}{{\left( {{{\bar K}_{j}}} \right)}^{\frac{1}{p}}}}}}^{p - 1}d{\mu _{j}} = d{\mu _\mathfrak{p}}\left( {\frac{{{{\bar K}_{j}}}}{{{F_{p,j}}{{\left( {{{\bar K}_{j}}} \right)}^{\frac{1}{p}}}}}, \cdot } \right).\]
From this, the fact that ${\rm C}_\mathfrak{p}({\bar K}_{j})=1$,
together with the positive homogeneity of $\mathfrak{p}$-capacity,
support functions and $\mathfrak{p}$-capacitary measure, it follows
that
\[h_{{{\bar K}_{j}}}^{p - 1}d{\mu _{j}} = {F_{p,j}}{\left( {{{\bar K}_{j}}} \right)}d{\mu _\mathfrak{p}}\left( {{{\bar K}_{j}}, \cdot } \right).\]
By CNSXYZ \cite[Lemma 2.18]{capacitaryBMtheory(CNSXYZ)}, there is a
positive constant $c$ depending on $n$, $\mathfrak{p}$ and $M$, such
that $ \mu_\mathfrak{p}(\bar{K}_{j},\cdot)\ge
c^{-\mathfrak{p}}S_{\bar{K}_{j}}$. Thus,
\[h_{{{\bar K}_{j}}}^{p - 1}d{\mu _{j}} \ge {c^{ - \mathfrak{p}}}F_{p,j}({{\bar K}_{j}})d{S_{{{\bar K}_{j}}}}.\]

Let $f\in C(\mathbb{S}^{n-1})$ be non-negative. Then,
\begin{equation}\label{ineq7.2.4.1}
\int\limits_{{\mathbb{S}^{n - 1}}} {fh_{{{\bar K}_{j}}}^{p - 1}d{\mu_{j}}}   \ge {c^{ - \mathfrak{p}}}F_{p,j}({{\bar K}_{j}})\int\limits_{{\mathbb{S}^{n - 1}}} {fd{S_{{{\bar K}_{j}}}}}.
\end{equation}
Here, several facts are in order. First, the convergence
$\bar{K}_{j}\to \bar{K}$ is equivalent to the uniform convergence
$h_{\bar{K}_{j}}\to h_{\bar{K}}$ over the sphere $\mathbb{S}^{n-1}$.
Second, the uniform convergence $h_{\bar{K}_{j}}\to h_{\bar{K}}$
together with the weak convergence $\mu_{j}\to \mu$ yields the
convergence $F_{p,j}(\bar{K}_{j})\to F_p(\bar{K})$. Third, the
convergence $\bar{K}_{j}\to \bar{K}$ again yields the weak
convergence $S_{\bar{K}_{j}}\to S_{\bar{K}}$. Hence, let
$j\to\infty$,   (\ref{ineq7.2.4.1}) yields that
\begin{equation}\label{ineq7.2.4.2}
\int\limits_{{\mathbb{S}^{n - 1}}} {fh_{\bar{K}}^{p - 1}d\mu }  \ge
{c^{ - \mathfrak{p}}}F_p(\bar K)\int\limits_{{\mathbb{S}^{n - 1}}}
{fd{S_{\bar K}}} .
\end{equation}

With this inequality in hand, we devote to showing that $\dim(K)\neq n-1$.

Assume that $\dim(\bar{K})= n-1$ and $\bar K$ is contained in an
$(n-1)$-dimensional linear subspace with normal
$\xi_0\in\mathbb{S}^{n-1}$. By the definition of surface area measure,
$S_{\bar K}=V_{n-1}(\bar K)(\delta_{\xi_0}+\delta_{-\xi_0})$, where
 $V_{n-1}(\bar{K})$ is the $(n-1)$-dimensional volume of
$\bar{K}$. Now, (\ref{ineq7.2.4.2}) can be reformulated as
\begin{equation}\label{ineq7.2.4.3}
\int\limits_{{\mathbb{S}^{n - 1}}} {f d\bar{\mu} }  \ge c' \cdot (f({\xi _0}) + f( - {\xi _0})),
\end{equation}
where $\bar{\mu}$ is the Borel measure on $\mathbb{S}^{n-1}$ defined
by $d\bar{\mu}=h_{\bar{K}}^{p-1}d\mu$, and $ {c'} = {c^{ - \mathfrak{p}}}F(\bar K){V_{n - 1}}(\bar K)$.

Recall that $\bar{K}$ contains the origin. So, $h_{\bar{K}}\ge 0$,
which in turn gives  $F_p(\bar{K})\ge 0$.  Now,  we  prove that
$F_p(\bar{K})>0.$
Assume that $F_p(\bar{K})= 0$. Since
\[0=\int\limits_{\mathbb{S}^{n-1}}h_{\bar{K}}^pd\mu=\int\limits_{\{h_{\bar{K}}>0\}}h_{\bar{K}}^pd\mu+\int\limits_{\{h_{\bar{K}}=0\}}h_{\bar{K}}^pd\mu=\int\limits_{\{h_{\bar{K}}>0\}}h_{\bar{K}}^pd\mu,\]
it follows that $\mu(\{h_{\bar{K}}>0\})=0$. Thus,
\[{\rm supp} \mu\subseteq \mathbb{S}^{n-1}\setminus\{h_{\bar{K}}>0\}=\{h_{\bar{K}}=0\}. \]
Since $\{h_{\bar{K}}=0\}$ is contained in some closed
hemisphere, it follows that $\mu$ is concentrated on some closed
hemisphere, which is a contradiction. Hence, $F_p(\bar{K})>0$,
and therefore,  $c'>0$.

With $c'>0$ and $f\in C(\mathbb{S}^{n-1})$ is  non-negative, by Evans
and Gariepy \cite[Theorem 3, p. 42]{BookEvansGariepy},  (\ref{ineq7.2.4.3}) implies that the Borel measure
$\bar\mu$ satisfies
\[\bar{\mu}(\{\xi_0\})=\bar{\mu}(\{-\xi_0\})>0. \]
However, from the assumption that $h_{\bar{K}}(\pm\xi_0)=0$ and the
definition of $\bar{\mu}$, it follows that
\[\bar{\mu}(\{\xi_0\})=\bar{\mu}(\{-\xi_0\}) =0. \]
A contradiction occurs. Hence, $\dim(K)\neq n-1$.
\end{proof}

\begin{lemma}\label{lemma7.3}
Suppose   $1<\mathfrak{p}\le 2$.  If
$\{\bar{K}_j\}_j$ converges to a compact convex set $\bar{K}$, then
$\dim(\bar{K})\neq 0,1,\ldots, n-2$.
\end{lemma}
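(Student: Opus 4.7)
My plan is to proceed by contradiction. Unlike Lemma 7.2, one cannot exploit the inequality $h_{\bar K}^{p-1}\,d\mu \ge c^{-\mathfrak p}F_p(\bar K)\,dS_{\bar K}$ derived there, because the surface area measure $S_{\bar K}$ vanishes identically as soon as $\dim\bar K \le n-2$ (the boundary of $\bar K$ carries zero $(n-1)$-Hausdorff measure, so the inequality becomes trivial). The decisive ingredient is instead the classical relation between Hausdorff measure and $\mathfrak p$-capacity, and the hypothesis $1<\mathfrak p \le 2$ is exactly what makes it bite.

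Suppose for contradiction that $k := \dim\bar K \in \{0, 1, \ldots, n-2\}$. Since $\bar K$ is bounded and contained in a $k$-dimensional affine subspace, $\mathcal H^k(\bar K)<\infty$, and hence $\mathcal H^s(\bar K)=0$ for every $s>k$. Because $1<\mathfrak p \le 2$, we have $n-\mathfrak p \ge n-2 \ge k$, so in all cases $\mathcal H^{n-\mathfrak p}(\bar K)<\infty$ (zero if $n-\mathfrak p > k$, finite if $n-\mathfrak p = k$). The standard potential-theoretic dichotomy (see Heinonen-Kilpel\"ainen-Martio or Adams-Hedberg) then yields ${\rm C}_\mathfrak p(\bar K)=0$.

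On the other hand, each $\bar K_j$ solves Problem 4 for $\mu_j$, so ${\rm C}_\mathfrak p(\bar K_j)=1$ for every $j$. The Hausdorff convergence $\bar K_j\to\bar K$ furnishes, for any $\varepsilon>0$, the inclusion $\bar K_j \subset \bar K + \varepsilon B$ for all $j$ sufficiently large; monotonicity of $\mathfrak p$-capacity then yields $1 = {\rm C}_\mathfrak p(\bar K_j) \le {\rm C}_\mathfrak p(\bar K + \varepsilon B)$. As $\varepsilon \downarrow 0$, the convex bodies $\bar K + \varepsilon B$ decrease to $\bar K$, and outer regularity of $\mathfrak p$-capacity gives ${\rm C}_\mathfrak p(\bar K + \varepsilon B) \to {\rm C}_\mathfrak p(\bar K)=0$, contradicting $1 \le {\rm C}_\mathfrak p(\bar K + \varepsilon B)$.

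The main obstacle is invoking two pieces of non-linear potential theory cleanly and without re-developing them in the paper: the capacity-Hausdorff criterion and outer regularity of ${\rm C}_\mathfrak p$ under monotone decreasing limits of convex bodies. The hypothesis $\mathfrak p\le 2$ is sharp here: for $\mathfrak p>2$ one has $n-\mathfrak p<n-2$, so lower-dimensional convex sets may still carry positive $\mathfrak p$-capacity, and a genuinely different argument would be needed; this matches the $1<\mathfrak p\le 2$ restriction imposed in Theorems 1.2--1.4. A more self-contained alternative would be to build explicit radial cut-off test functions supported in a thin tubular neighborhood of the $k$-dimensional subspace carrying $\bar K$ and estimate the $L^\mathfrak p$-norm of their gradients directly to show ${\rm C}_\mathfrak p(\bar K + \varepsilon B) \to 0$, but this trades the clean citation for technical calculation.
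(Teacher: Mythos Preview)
Your argument is correct and follows essentially the same route as the paper: use $\mathfrak{p}\le 2$ to obtain $\dim\bar K\le n-\mathfrak{p}$, hence $\mathcal{H}^{n-\mathfrak{p}}(\bar K)<\infty$, invoke the Hausdorff-measure criterion to conclude ${\rm C}_\mathfrak{p}(\bar K)=0$, and contradict ${\rm C}_\mathfrak{p}(\bar K_j)=1$ via continuity of $\mathfrak{p}$-capacity. The paper cites Evans--Gariepy for the criterion and appeals directly to continuity of ${\rm C}_\mathfrak{p}$ under Hausdorff convergence rather than spelling out the monotonicity/outer-regularity step, but the substance is identical.
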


\begin{proof}
The arguments here is similar to that from CNSXYZ \cite[p.
1571]{capacitaryBMtheory(CNSXYZ)}.  If $1<\mathfrak{p}\le 2$ and
$\dim(\bar K)\le n-2$, then $\dim(\bar K)\le n-\mathfrak{p}$ and
thus $\mathcal{H}^{n-\mathfrak{p}}(\bar K)<\infty$. According to  Evans and Gariepy
\cite[Theorem 3, p. 154]{BookEvansGariepy}: if
$\mathcal{H}^{n-\mathfrak{p}}( \bar K)<\infty$, then ${\rm
C}_\mathfrak{p}( \bar K)=0$, it follows that ${\rm C}_\mathfrak{p}(\bar K)=0$.
This is impossible, because of the continuity of ${\rm
C}_\mathfrak{p}$ and the fact that ${\rm
C}_\mathfrak{p}(\bar{K}_{j})=1$ for each $j$.
\end{proof}

\begin{lemma}\label{lemma7.4}
Suppose $1<\mathfrak{p}\le 2$. If
$\{\bar{K}_j\}_j$ converges to a compact convex set $\bar{K}$, then
the following assertions hold.

\noindent (1) $\bar{K}$ is a convex body containing the origin.

\noindent (2) $0<\int_{\mathbb{S}^{n-1}}h_{\bar{K}}^pd\mu<\infty$.

\noindent (3) The convex body
\[K=\left(\frac{\mathfrak{p}-1}{n-\mathfrak{p}}\int\limits_{\mathbb{S}^{n-1}}h_{\bar{K}}^pd\mu\right)^{\frac{-1}{p}}\bar{K}\]
is the unique solution to Problem 5 for $\mu$.
\end{lemma}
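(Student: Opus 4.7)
My plan is to assemble the three assertions in order, using the dimensional restrictions already obtained in Lemmas \ref{lemma7.2}--\ref{lemma7.3} to get (1), a simple hemisphere argument for (2), and then a passage-to-the-limit in the Problem 5 identity for (3), with uniqueness handed to us by Lemma \ref{lemma6.3}.

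\textbf{Step 1: Proof of (1).} Each $\bar{K}_j$ contains the origin, so the Hausdorff limit $\bar{K}$ contains the origin as well. The dimension is pinned down by combining the two previous lemmas: Lemma \ref{lemma7.3} rules out $\dim(\bar{K})\le n-2$, while Lemma \ref{lemma7.2} rules out $\dim(\bar{K})=n-1$. Hence $\dim(\bar{K})=n$ and $\bar{K}\in\mathcal{K}^{n}$ with $o\in\bar{K}$.

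\textbf{Step 2: Proof of (2).} The upper bound is immediate: $\bar{K}$ is compact so $h_{\bar{K}}$ is bounded on $\mathbb{S}^{n-1}$, and $\mu$ is a finite Borel measure. For the lower bound I would argue exactly as at the end of the proof of Lemma \ref{lemma7.2}: if the integral vanished then $h_{\bar{K}}=0$ on $\mathrm{supp}\,\mu$, forcing $\mathrm{supp}\,\mu\subseteq\{h_{\bar{K}}=0\}$. Since $\bar{K}$ is full-dimensional and contains $o$, the set $\{h_{\bar{K}}=0\}$ is either empty (when $o\in\mathrm{int}\,\bar{K}$) or contained in a closed hemisphere (the normal cone to $\bar{K}$ at $o$, intersected with the sphere), contradicting the standing hypothesis that $\mu$ is not concentrated on any closed hemisphere.

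\textbf{Step 3: Proof of (3).} The idea is to pass to the limit in the Problem 5 identity satisfied by each $K_j$. Lemmas \ref{lem6.1}(2) and \ref{lemma6.3} give that $K_j=F_{p,j}(\bar{K}_j)^{-1/p}\bar{K}_j$ solves Problem 5 for $\mu_j$, so
\begin{equation}\label{proplim}
{\rm C}_\mathfrak{p}(K_j)\,h_{K_j}^{p-1}\,d\mu_j=d\mu_\mathfrak{p}(K_j,\cdot).
\end{equation}
Since $h_{\bar{K}_j}\to h_{\bar{K}}$ uniformly and $\mu_j\to\mu$ weakly, $F_{p,j}(\bar{K}_j)\to F_p(\bar{K})$; by (2) the limit is strictly positive, so $K_j\to K:=F_p(\bar{K})^{-1/p}\bar{K}$ in the Hausdorff metric, and $K$ is a convex body containing the origin. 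Then the continuity of $\mathrm{C}_\mathfrak{p}$ on $\mathcal{K}^n$ gives ${\rm C}_\mathfrak{p}(K_j)\to{\rm C}_\mathfrak{p}(K)$; the weak convergence of $\mathfrak{p}$-capacitary measures (CNSXYZ) gives $\mu_\mathfrak{p}(K_j,\cdot)\to\mu_\mathfrak{p}(K,\cdot)$ weakly; and for any $f\in C(\mathbb{S}^{n-1})$ the uniform convergence $f h_{K_j}^{p-1}\to f h_{K}^{p-1}$ combined with $\mu_j\to\mu$ weakly yields $\int fh_{K_j}^{p-1}d\mu_j\to\int fh_K^{p-1}d\mu$. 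Testing \eqref{proplim} against $f$ and sending $j\to\infty$ produces
\[
{\rm C}_\mathfrak{p}(K)\int_{\mathbb{S}^{n-1}}f h_K^{p-1}d\mu=\int_{\mathbb{S}^{n-1}}f\,d\mu_\mathfrak{p}(K,\cdot),
\]
i.e.\ $K$ solves Problem 5 for $\mu$. Uniqueness is exactly the last assertion of Lemma \ref{lemma6.3}.

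\textbf{Main obstacle.} The potentially delicate point is justifying that $F_p(\bar{K})>0$ so that the scaling in the definition of $K$ is legitimate; this is where the hypothesis that $\mu$ is not concentrated on a closed hemisphere is used. Everything else reduces to routine continuity/weak-convergence bookkeeping once Lemmas \ref{lemma7.2} and \ref{lemma7.3} are in hand.
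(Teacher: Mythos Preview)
Your proposal is correct and follows essentially the same approach as the paper's own proof: Lemmas \ref{lemma7.2}--\ref{lemma7.3} for (1), the closed-hemisphere contradiction for (2), and passage to the limit in the Problem~5 identity (using $K_j=F_{p,j}(\bar{K}_j)^{-1/p}\bar{K}_j$, continuity of ${\rm C}_\mathfrak{p}$, and weak convergence of $\mu_\mathfrak{p}(K_j,\cdot)$) together with Lemma~\ref{lemma6.3} for (3). Your write-up is in fact slightly more explicit than the paper's in spelling out the test-function argument for the limiting measure identity.
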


\begin{proof}
By Lemma \ref{lemma7.2} and Lemma \ref{lemma7.3}, it follows that $\bar{K}$ is a convex body containing the origin.

From the facts that $\max_{\mathbb{S}^{n-1}}h_{\bar{K}}^p<\infty$ and
 $|\mu|<\infty$, it   follows that $ \int_{\mathbb{S}^{n-1}}h_{\bar{K}}^pd\mu<\infty$.
Now,  we show
$\int_{\mathbb{S}^{n-1}}h_{\bar{K}}^pd\mu>0$ by contradiction. Assume
that $\int_{\mathbb{S}^{n-1}}h_{\bar{K}}^pd\mu=0$. Then, $
0=\int_{\{h_{\bar{K}}>0\}}h_{\bar{K}}^pd\mu$, and therefore,
$\mu(\{h_{\bar{K}}>0\})=0$. If  $\bar{K}$ contains the
origin in its interior, then $\{h_{\bar{K}}>0\}=\mathbb{S}^{n-1}$
and  $\mu(\{h_{\bar{K}}>0\})=\mu(\mathbb{S}^{n-1})=|\mu|>0$. So, the
origin is on the  boundary  of $\bar{K}$, and therefore
$\{h_{\bar{K}}=0\}$ is contained in some closed hemisphere. Note
that $ {\rm supp} \mu\subseteq \{h_{\bar{K}}=0\}$. So, $\mu$ is
concentrated on some closed hemisphere. It is a contradiction.

The assertions (1) and (2) imply that $K$ is a convex body
containing the origin. Since
\[K_j=\left(\frac{p-1}{n-p}\int\limits_{\mathbb{S}^{n-1}}h_{\bar{K}_j}^pd\mu_j\right)^{-1/p}
\bar{K}_j \quad {\rm and}\quad \lim_{j\to\infty}
\int\limits_{\mathbb{S}^{n-1}}h_{\bar{K}_j}^pd\mu_j=\int\limits_{\mathbb{S}^{n-1}}h_{\bar{K}}^pd\mu_j,\]
it follows that $\{K_j\}_j$ converges to $K$. From ${\rm
C}_\mathfrak{p}(K_j)h_{K_j}^{p-1}d\mu_{j}=d\mu_\mathfrak{p}(K_j,\cdot)$,
and  the facts that the uniform convergence $h_{K_j}\to h_K$ yields
the convergence ${\rm C}_\mathfrak{p}(K_j)\to {\rm
C}_\mathfrak{p}(K)$ and the weak convergence
$\mu_\mathfrak{p}(K_j,\cdot)\to \mu_\mathfrak{p}(K,\cdot)$, it
follows that $ {\rm
C}_\mathfrak{p}(K)h_K^{p-1}d\mu=d\mu_\mathfrak{p}(K,\cdot)$. So, $K$
is a solution to Problem 5 for $\mu$. As far the uniqueness, it is
guaranteed  by Lemma \ref{lemma6.3}.
\end{proof}

\vskip 25pt
\section{\bf  The $\boldsymbol{L_p}$ Minkowski problem for $\boldsymbol{\mathfrak{p}}$-capacity when $\boldsymbol{1<\mathfrak{p}\le 2}$}
\vskip 10pt

With the  preparatory works in Section 6 and Section 7, we set out to prove Theorem 1.2.

\begin{theorem}\label{generalCapMProblem}
Suppose  $1< p<\infty$ and $1<\mathfrak{p}\le 2$.
If $\mu$ is a finite Borel measure on $\mathbb{S}^{n-1}$
which is not concentrated on any closed hemisphere, then there
exists a unique convex body $K$ in $\mathbb{R}^n$ containing the
origin, such that $ {\rm
C}_\mathfrak{p}(K)h_K^{p-1}d\mu=d\mu_\mathfrak{p}(K,\cdot)$.
If in addition $p\ge n$, then $K$ contains the origin in its interior.
\end{theorem}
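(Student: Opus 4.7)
The plan is to prove Theorem~\ref{generalCapMProblem} by an approximation argument, using the discrete existence result (Theorem~\ref{discreteCapMinkowskiThm}) together with the equivalences and compactness machinery developed in Sections~6--7.

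First I will choose a sequence $\{\mu_j\}_{j\in\mathbb N}$ of discrete finite Borel measures on $\mathbb S^{n-1}$, each not concentrated on any closed hemisphere, with $\mu_j\to\mu$ weakly. Such a sequence exists by standard weak approximation, and the hemisphere condition is preserved in the tail because $\mu$ itself fails to concentrate on any closed hemisphere. By Theorem~\ref{discreteCapMinkowskiThm} combined with Lemma~\ref{lem6.2} and Lemma~\ref{lemma6.3}, for every $j$ there is a unique polytope $K_j\in\mathcal K^n_o$ satisfying
\[ {\rm C}_{\mathfrak p}(K_j)h_{K_j}^{p-1}d\mu_j=d\mu_\mathfrak{p}(K_j,\cdot), \]
and the renormalization $\bar K_j:=K_j/{\rm C}_\mathfrak{p}(K_j)^{1/(n-\mathfrak p)}$ is the corresponding minimizer of Problem~4 for $\mu_j$.

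Next I extract the candidate solution. Lemma~\ref{lemma7.1} supplies a uniform upper bound for $\{\bar K_j\}$, so by the Blaschke selection theorem a subsequence $\bar K_{j_k}\to \bar K$ converges in Hausdorff metric to a compact convex set $\bar K$ containing the origin. The hypothesis $1<\mathfrak{p}\le 2$ is crucial here: Lemma~\ref{lemma7.2} (for codimension one) together with Lemma~\ref{lemma7.3} (for higher codimension, which uses the $\mathcal H^{n-\mathfrak p}$-nullity criterion for $\mathfrak p$-capacity) rule out $\dim\bar K\le n-1$, so $\bar K$ is a genuine convex body. Lemma~\ref{lemma7.4} then assembles these facts into the conclusion that $K:=F_p(\bar K)^{-1/p}\bar K$ is a convex body containing the origin that solves Problem~5 for $\mu$, and uniqueness of $K$ follows from Lemma~\ref{lemma6.3}. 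This establishes the first half of the theorem.

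It remains to upgrade ``$o\in K$'' to ``$o\in\operatorname{int}K$'' under the additional assumption $p\ge n$. I plan to argue by contradiction: suppose $o\in\partial K$ with outer unit normal $\nu$, so $h_K(\nu)=0$ and $h_K$ vanishes on a closed spherical cap $\Omega\ni\nu$. Using the measure equation together with the CNSXYZ bound $\mu_\mathfrak{p}(K,\cdot)\ge c^{-\mathfrak p}S_K$, both $\mu_\mathfrak{p}(K,\cdot)$ and $S_K$ vanish on $\Omega$. I then consider the translated family $K_t:=K-t x_0$ for $x_0\in\operatorname{int}K$ and small $t>0$, which satisfies $o\in\operatorname{int}K_t$ and ${\rm C}_\mathfrak{p}(K_t)={\rm C}_\mathfrak{p}(K)$. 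Expanding $F_p(K_t)$ in $t$ with $h_{K_t}(\xi)=h_K(\xi)-t\,x_0\cdot\xi$, the first-order term vanishes by the centroid identity $\int\xi h_K^{p-1}d\mu=o$ extracted from the measure equation, and the leading contribution splits into an $O(t^p)$ piece coming from integration over $\Omega$ plus a $O(t^2)$ piece from $\{h_K>0\}$. Combined with the optimality of $K$ in Problem~3, this forces a degeneracy that is incompatible with $\mu$ being non-concentrated on any closed hemisphere, precisely once the rescaling is calibrated so that the exponent $p\ge n$ dominates. Making this variational argument rigorous — in particular reconciling the $t^p$ contribution from $\Omega$ with the quadratic contribution from $\{h_K>0\}$ under the constraint ${\rm C}_\mathfrak{p}(K_t)={\rm C}_\mathfrak{p}(K)$ — is the main obstacle I anticipate.
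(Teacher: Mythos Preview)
Your existence argument for the first half is essentially the paper's own proof: approximate $\mu$ by discrete measures, solve the polytopal problem via Theorem~\ref{discreteCapMinkowskiThm}, pass to $\bar K_j$, extract a limit with Lemmas~\ref{lemma7.1}--\ref{lemma7.4}, and read off uniqueness from Lemma~\ref{lemma6.3}. Nothing to add there.

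The gap is in the second half, where your translation-variation idea does not produce a contradiction and does not make contact with the condition $p\ge n$. Concretely, with $K_t=K-tx_0$ and $x_0\in\operatorname{int}K$, you correctly observe that the first-order term in $F_p(K_t)-F_p(K)$ vanishes by the centroid identity. But the remaining contributions are both \emph{nonnegative}: on $\{h_K=0\}$ the integrand becomes $t^p|x_0\cdot\xi|^p\ge 0$, while on $\{h_K>0\}$ the second-order correction is $\tfrac{p(p-1)}{2}t^2(x_0\cdot\xi)^2 h_K^{p-2}\ge 0$. Thus $F_p(K_t)\ge F_p(K)$, which is exactly what optimality of $K$ in Problem~3 predicts (since ${\rm C}_\mathfrak{p}(K_t)={\rm C}_\mathfrak{p}(K)$), and no contradiction arises. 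Your proposed ``calibration'' making $p\ge n$ enter is left unspecified, and there is no visible mechanism by which the ambient dimension $n$ could appear in this expansion at all.

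The paper's argument is quite different and operates on the approximating polytopes $P_{j_l}$ rather than on the limit $K$. From the measure equation $d\mu_{j_l}={\rm C}_\mathfrak{p}(P_{j_l})^{-1}h_{P_{j_l}}^{1-p}d\mu_\mathfrak{p}(P_{j_l},\cdot)$ together with the CNSXYZ lower bound $\mu_\mathfrak{p}(P_{j_l},\cdot)\ge c_1^{-\mathfrak{p}}S_{P_{j_l}}$ and the uniform bound $\sup_l|\mu_{j_l}|<\infty$, one obtains
\[
\sup_l\int_{\mathbb{S}^{n-1}} h_{P_{j_l}}^{1-p}\,dS_{P_{j_l}}<\infty.
\]
If $o\in\partial K$, a Hug--LYZ local graph estimate near the boundary point (representing $\partial K$ as a convex graph over a disc $B_r\subset\xi_K^\perp$) gives, along a further subsequence,
\[
\liminf_{k\to\infty}\int_{\mathbb{S}^{n-1}} h_{P_{j_{l_k}}}^{1-p}\,dS_{P_{j_{l_k}}}\ \ge\ c_3\int_0^r t^{\,n-p-1}\,dt.
\]
The right-hand side is finite only when $p<n$, which is the desired contradiction. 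This is where the threshold $p\ge n$ genuinely enters, via the divergence of $\int_0^r t^{n-p-1}dt$; your variational scheme has no analogue of this integral.
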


\begin{proof}
Take a sequence of discrete measures $\{\mu_j\}_j$ on
$\mathbb{S}^{n-1}$, such that each $\mu_j$ is not concentrated on
any closed hemisphere and $\mu_j\to\mu$ weakly. By Theorem
\ref{discreteCapMinkowskiThm} and Lemma \ref{lem6.2}, for each $j$,
Problem 5 for $\mu_j$ has a unique solution $P_j$, a convex polytope
containing the origin in its interior.

 Let
\[\bar{P_j}=\frac{P_j}{{\rm
C}_\mathfrak{p}(P_j)^{\frac{1}{n-\mathfrak{p}}}}.\]
By Lemma
\ref{lemma6.3} and Lemma \ref{lem6.1}, $\bar{P}_j$ is the unique
solution to Problem 4 for $\mu_j$. Since $\mu_j\to\mu$ weakly, the sequence $\{\bar{P}_j\}_j$ is bounded from
above  by
Lemma \ref{lemma7.1}. From the Blaschke selection theorem, $\{\bar{P}_j\}_j$ has a
convergent subsequence $\{{\bar P}_{j_l}\}_l$, which converges to a
compact convex set, say $\bar{K}$. By Lemma \ref{lemma7.4} (1),
$\bar{K}$ is a convex body containing the origin. By Lemma \ref{lemma7.4} (2),
$0<\int_{\mathbb{S}^{n-1}}h_{\bar{K}}^pd\mu<\infty$. Thus, we get a
convex body
\[K:=\left(\frac{\mathfrak{p}-1}{n-\mathfrak{p}}\int\limits_{\mathbb{S}^{n-1}}h_{\bar{K}}^pd\mu\right)^{\frac{-1}{p}}\bar{K}. \]
By Lemma \ref{lemma7.4} (3), the convex body $K$ is the unique
solution to Problem 5 for $\mu$.

It remains to prove that if in addition $p\ge n$,  then $K$ contains the origin in its interior.

Several useful facts are listed. First,  $\sup_l\{|\mu_{j_l}|\}<\infty$.
Second, $d\mu_{j_l}=\frac{h_{P_{j_l}}^{1-p}}{{\rm
C}_\mathfrak{p}(P_{j_l})}d\mu_{\mathfrak{p}}(P_{j_l},\cdot)$, for
each $l$. Third, from the convergence $P_{j_l}\to K$ and  CNSXYZ
\cite[Lemma 2.18]{capacitaryBMtheory(CNSXYZ)}, there is a positive
constant $c_1$ depending on $n$, $\mathfrak{p}$ and $
\max\{h_{P_{j_l}}(\xi): \xi\in\mathbb{S}^{n-1}, l\in\mathbb{N}\}$,
such that $ \mu_\mathfrak{p}(P_{j_l},\cdot)\ge
c_1^{-\mathfrak{p}}S_{\bar{P}_{j_l}}$. Finally, from the convergence
$P_{j_l}\to K$ again and the continuity of $\mathfrak{p}$-capacity,
it follows that $0< \sup_l\{{\rm C}_\mathfrak{p}(P_{j_l})\}<\infty$.
Hence,
\[\infty  > {\sup _l}\left\{ {|{\mu _{{j_l}}}|} \right\} \ge |{\mu _{{j_l}}}|
= \frac{1}{{{\rm C}_\mathfrak{p}({P_{{j_l}}})}}\int\limits_{{\mathbb{S}^{n - 1}}} {h_{{P_{{j_l}}}}^{1 - p}d{\mu _\mathfrak{p}}({P_{{j_l}}}, \cdot )}
\ge {c_2}\int\limits_{{\mathbb{S}^{n - 1}}} {h_{{P_{{j_l}}}}^{1 - p}d{S_{{P_{{j_l}}}}}} ,\]
where ${c_2}= \frac{{c_1^{ - \mathfrak{p}}}}{{{{\sup }_l}\left\{ {{\rm C}_\mathfrak{p}({P_{{j_l}}})} \right\}}}$.

Assume that the origin is on the boundary  of $K$. We derive that $p<n$ by adapting an argument from Hug and LYZ \cite[p.713]{Minkproblem(HugLYZ)}. Let
$\xi_K\in\mathbb{S}^{n-1}$ be  such that
$\partial K$ can be locally represented as the graph of a convex
function over (a neighborhood of) $B_r:=\xi_K^\bot\cap rB$, $r>0$, and
$x\cdot\xi_K\ge 0$ for any $x\in K$.  There exists a subsequence
$\{j_{l_k}\}_k$ of $\{j_l\}_l$ tending to $\infty$ and  a constant $c_3>0$ independent of $l$, such that
\[\mathop {\lim }\limits_{k \to \infty } \int\limits_{{\mathbb{S}^{n - 1}}} {h_{{P_{{j_{{l_k}}}}}}^{1 - p}d{S_{{P_{{j_{{l_k}}}}}}}}  \ge {c_3}\int\limits_0^r {{t^{n - p - 1}}dt} .\]
Hence,
\[\infty  > {\sup _l}\left\{ {|{\mu _{{j_l}}}|} \right\} \ge {c_2}{c_3}\int\limits_0^r {{t^{n - p - 1}}dt} ,\]
which implies that $p<n$.
\end{proof}

From Theorem \ref{generalCapMProblem}, we  immediately obtain the following results.

\begin{corollary}\label{generalCapMProblem2}
Suppose  $1< p<\infty$, $1<\mathfrak{p}\le 2$ and $
n-\mathfrak{p}\ne p$.
If $\mu$ is a finite Borel measure on $\mathbb{S}^{n-1}$
which is not concentrated on any closed hemisphere, then there exists a unique convex body $K$ in
$\mathbb{R}^n$ containing the origin, such that
\[h_K^{p-1}d\mu=d\mu_\mathfrak{p}(K,\cdot).\]
If in addition $p\ge n$, then $K\in\mathcal{K}^n_o$.
\end{corollary}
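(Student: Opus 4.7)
The plan is to reduce this corollary to Theorem \ref{generalCapMProblem} via a single dilation, following the same template by which Corollary \ref{discreteCapMinkowskiThm2} was derived from Theorem \ref{discreteCapMinkowskiThm}. The hypothesis $n-\mathfrak{p}\neq p$ is exactly the condition that makes the rescaling invertible.

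For existence, I would first apply Theorem \ref{generalCapMProblem} to the given $\mu$ to obtain a convex body $K^{\ast}$ containing the origin with
\[
{\rm C}_\mathfrak{p}(K^{\ast})\, h_{K^{\ast}}^{p-1}\,d\mu \;=\; d\mu_\mathfrak{p}(K^{\ast},\cdot),
\]
and with $K^{\ast}\in\mathcal{K}^n_o$ whenever $p\ge n$. I would then define
\[
K \;=\; {\rm C}_\mathfrak{p}(K^{\ast})^{-1/(n-\mathfrak{p}-p)}\, K^{\ast},
\]
which is well-defined precisely because $n-\mathfrak{p}\neq p$. Using the standard scaling identities $h_{\lambda K^{\ast}}=\lambda h_{K^{\ast}}$, ${\rm C}_\mathfrak{p}(\lambda K^{\ast})=\lambda^{n-\mathfrak{p}}{\rm C}_\mathfrak{p}(K^{\ast})$ and $\mu_\mathfrak{p}(\lambda K^{\ast},\cdot)=\lambda^{n-\mathfrak{p}-1}\mu_\mathfrak{p}(K^{\ast},\cdot)$, the desired identity $h_K^{p-1}\,d\mu = d\mu_\mathfrak{p}(K,\cdot)$ reduces to the single algebraic relation $\lambda^{n-\mathfrak{p}-p}={\rm C}_\mathfrak{p}(K^{\ast})^{-1}$, which is satisfied by construction. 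Since $K$ is a positive dilate of $K^{\ast}$, the conclusion $K\in\mathcal{K}^n_o$ in the regime $p\ge n$ transfers verbatim from the analogous statement in Theorem \ref{generalCapMProblem}.

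For uniqueness, suppose $K_1,K_2$ are two convex bodies containing the origin with $h_{K_i}^{p-1}\,d\mu = d\mu_\mathfrak{p}(K_i,\cdot)$ for $i=1,2$. Setting $K_i^{\ast}={\rm C}_\mathfrak{p}(K_i)^{-1/p}K_i$ and running the same homogeneity bookkeeping in reverse, each $K_i^{\ast}$ is seen to satisfy the Theorem \ref{generalCapMProblem} equation for $\mu$; the uniqueness part of Theorem \ref{generalCapMProblem} then forces $K_1^{\ast}=K_2^{\ast}$, hence $K_1 = s K_2$ for some $s>0$. Substituting $K_1=sK_2$ into the equation for $K_1$ and comparing with the equation for $K_2$ yields $s^{p-1}h_{K_2}^{p-1}\,d\mu = s^{n-\mathfrak{p}-1}h_{K_2}^{p-1}\,d\mu$, and since $h_{K_2}^{p-1}d\mu$ is a nontrivial measure this forces $s^{p-n+\mathfrak{p}}=1$, whence $s=1$, once more by $n-\mathfrak{p}\neq p$.

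Substantively, the whole proof amounts to bookkeeping with three scaling identities; there is no genuine analytic obstacle. The sole delicate point, and the one place where the hypothesis $n-\mathfrak{p}\neq p$ is actually used, is that the scalar equation $\lambda^{n-\mathfrak{p}-p}=c$ must admit a unique positive solution for each $c>0$, which is exactly what permits both the construction of $K$ from $K^{\ast}$ and the closing of the uniqueness argument.
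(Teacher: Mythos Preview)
Your proposal is correct and follows essentially the same route as the paper: apply Theorem~\ref{generalCapMProblem} to obtain $K^\ast$, then dilate by ${\rm C}_\mathfrak{p}(K^\ast)^{-1/(n-\mathfrak{p}-p)}={\rm C}_\mathfrak{p}(K^\ast)^{1/(p+\mathfrak{p}-n)}$. Your treatment is in fact more complete, since the paper's proof omits the uniqueness argument entirely (relying implicitly on the one-to-one correspondence between solutions of the two equations under the hypothesis $n-\mathfrak{p}\neq p$), whereas you spell it out.
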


\begin{proof}
By Theorem \ref{generalCapMProblem}, there exists a unique convex
body $K^*$ containing the origin, such that
$C_{\mathfrak{p}}(K^*)h_{K^*}^{p - 1}d\mu
=d\mu_{\mathfrak{p}}(K^*,\cdot)$. Let $K = {\rm
C}_\mathfrak{p}(K^*)^{1/(p+\mathfrak{p}-n)}K^*$. Then, $h_K^{p -
1}d\mu = d{\mu _\mathfrak{p}}(K, \cdot )$.
\end{proof}

\begin{corollary}\label{generalCapMProblem3}
Suppose   $1< p<\infty$ and  $1<\mathfrak{p}\le 2$. If $\mu$ is a
finite even Borel measure on $\mathbb{S}^{n-1}$ which is not
concentrated on any great subsphere, then there exists a unique
origin-symmetric convex body $K$ in $\mathbb{R}^n$, such that $
{{\rm C}_\mathfrak{p}(K)}^{-1}{\mu_{p,\mathfrak{p}}(K,\cdot)}=\mu$.
\end{corollary}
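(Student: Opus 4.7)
The plan is to reduce Corollary 8.3 to Theorem \ref{generalCapMProblem} by a reflect-and-invoke-uniqueness argument that exploits evenness, rather than directly applying Lemma \ref{characterizaion5} (which requires $K\in\mathcal{K}^n_o$ as a hypothesis, something Theorem \ref{generalCapMProblem} does not immediately provide for $p<n$).

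First, I would verify that the hypotheses of Theorem \ref{generalCapMProblem} are met, i.e., $\mu$ is not concentrated on any closed hemisphere. Indeed, if $\mathrm{supp}\,\mu\subseteq\{\eta\in\mathbb{S}^{n-1}:\eta\cdot\xi\ge 0\}$ for some $\xi$, then evenness of $\mu$ forces $\mathrm{supp}\,\mu\subseteq\{\eta\cdot\xi\le 0\}$ as well, hence $\mathrm{supp}\,\mu\subseteq\xi^\perp\cap\mathbb{S}^{n-1}$, contradicting the assumption that $\mu$ is not concentrated on any great subsphere. Theorem \ref{generalCapMProblem} then produces a unique convex body $K$ containing the origin with ${\rm C}_\mathfrak{p}(K)h_K^{p-1}d\mu=d\mu_\mathfrak{p}(K,\cdot)$, which by Definition \ref{DefOfLqCapMeasure} is exactly ${\rm C}_\mathfrak{p}(K)^{-1}\mu_{p,\mathfrak{p}}(K,\cdot)=\mu$.

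Second, I would show $K$ is origin-symmetric. The rigid invariance ${\rm C}_\mathfrak{p}(-K)={\rm C}_\mathfrak{p}(K)$ together with $\mu_\mathfrak{p}(-K,\omega)=\mu_\mathfrak{p}(K,-\omega)$ and the identity $h_{-K}(\xi)=h_K(-\xi)$ yields, after a short change-of-variables computation, that $\mu_{p,\mathfrak{p}}(-K,\omega)=\mu_{p,\mathfrak{p}}(K,-\omega)$ for Borel $\omega$; that is, $\mu_{p,\mathfrak{p}}(-K,\cdot)$ is the pushforward of $\mu_{p,\mathfrak{p}}(K,\cdot)$ under the antipodal map. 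Combining this with the evenness of $\mu$ and the identity ${\rm C}_\mathfrak{p}(K)^{-1}\mu_{p,\mathfrak{p}}(K,\cdot)=\mu$, one concludes ${\rm C}_\mathfrak{p}(-K)^{-1}\mu_{p,\mathfrak{p}}(-K,\cdot)=\mu$ as well. The uniqueness clause of Theorem \ref{generalCapMProblem} then forces $K=-K$. Finally, a nondegenerate origin-symmetric convex body must contain the origin in its interior (else a supporting hyperplane through $o$ would, by symmetry, confine $K$ to that hyperplane, collapsing dimension), so $K\in\mathcal{K}^n_o$ and $K$ is origin-symmetric.

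Third, uniqueness in the corollary is inherited directly: any origin-symmetric convex body satisfying the prescribed identity is in particular a solution to Problem 5 for $\mu$, and Theorem \ref{generalCapMProblem} asserts uniqueness of that solution. Since all the heavy lifting has been done in Theorem \ref{generalCapMProblem} and Lemma \ref{characterizaion5}-style reasoning is sidestepped by the reflection trick, no substantial obstacle is anticipated; the only steps requiring verification are the pushforward identity for $\mu_{p,\mathfrak{p}}(-K,\cdot)$ and the standard interior-point observation for symmetric bodies, both routine.
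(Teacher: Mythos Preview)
Your proposal is correct and follows essentially the same approach as the paper: apply Theorem \ref{generalCapMProblem} to obtain the unique solution $K$, observe that evenness of $\mu$ forces $-K$ to be a solution as well, and conclude $K=-K$ by uniqueness. Your version is somewhat more careful than the paper's, explicitly verifying that $\mu$ is not concentrated on a closed hemisphere and that an origin-symmetric convex body has the origin in its interior (so that $\mu_{p,\mathfrak{p}}(K,\cdot)$ is well-defined); the paper's proof leaves these points implicit.
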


\begin{proof}
By Theorem \ref{generalCapMProblem}, there exists a unique convex
body containing the origin, such that $h_K^{p-1}d\mu={\rm
C}_\mathfrak{p}(K)^{-1}d\mu_\mathfrak{p}(K,\cdot)$. Since $\mu$ is
even, it implies that $h_{-K}^{p-1}d\mu={\rm
C}_\mathfrak{p}(-K)^{-1}d\mu_\mathfrak{p}(-K,\cdot)$. So, the
uniqueness of $K$ in turn implies that $-K=K$.
\end{proof}

Consequently, if $ n-\mathfrak{p}\ne p$, then there exists a unique
origin-symmetric convex body $K'$ in $\mathbb{R}^n$, such that $
\mu= \mu_{p,\mathfrak{p}}(K',\cdot)$.

\vskip 25pt
\section{\bf  Continuity}
\vskip 10pt

Let $1<p<\infty$, $1<\mathfrak{p}\le 2$ and $\mathfrak{p}<n$.
Write $\mathcal{M}$ for the set of  finite Borel measures on
$\mathbb{S}^{n-1}$ which are not concentrated on any closed
hemisphere. For each $\mu\in\mathcal{M}$, denote by ${\rm
C}_\mathfrak{p}^p\mu$ the unique solution to Problem 5 for $(\mu,
\mathfrak{p},p)$, i.e., the unique convex body containing the origin
such that
\[\frac{d\mu_\mathfrak{p}(K,\cdot)}{{\rm C}_\mathfrak{p}(K)}=h_K^{p-1}d\mu.\]

A natural question about the continuity of solution to $L_p$
Minkowski problem for $\mathfrak{p}$-capacity asks the following:
\emph{If $\{\mu_j\}_j\subset \mathcal{M}$ converges to
$\mu\in\mathcal{M}$ weakly, is this the case that ${\rm
C}_\mathfrak{p}^p\mu_j\to {\rm C}_\mathfrak{p}^p\mu$?}

We answer this question affirmatively.

\begin{theorem}\label{Theorem9.1}
Suppose that $\mu_j,\mu\in\mathcal{M}$, $j\in\mathbb{N}$, $1<p<\infty$ and  $1<\mathfrak{p}\le 2$.
If $\mu_j\to\mu$ weakly  as
$j\to\infty$,  then
${\rm C}_\mathfrak{p}^p\mu_j\to {\rm C}_\mathfrak{p}^p\mu$.
\end{theorem}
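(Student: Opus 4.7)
The plan is to use a standard Blaschke-selection plus uniqueness argument, since the substantive work has already been done in Sections 7 and 8. Write $\bar K_j := K_j / \mathrm{C}_\mathfrak{p}(K_j)^{1/(n-\mathfrak{p})}$, which by Lemmas \ref{lem6.1} and \ref{lemma6.3} is the unique solution to Problem 4 for $\mu_j$.

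First, I would apply Lemma \ref{lemma7.1} (whose hypotheses are met, since $\mu_j \to \mu$ weakly) to conclude that both $\{K_j\}_j$ and $\{\bar K_j\}_j$ are uniformly bounded in the Hausdorff metric. By the Blaschke selection theorem, any subsequence of $\{K_j\}_j$ admits a further subsequence $\{K_{j_{l_k}}\}_k$ such that the associated $\{\bar K_{j_{l_k}}\}_k$ converges in the Hausdorff metric to some compact convex set $\bar K \subset \mathbb{R}^n$.

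Next, I would invoke Lemma \ref{lemma7.4} applied to this convergent subsequence: part (1) gives that $\bar K$ is actually a convex body containing the origin; part (2) gives $0 < \int_{\mathbb{S}^{n-1}} h_{\bar K}^p \, d\mu < \infty$; and part (3) gives that the rescaling
\[
K' := \left(\frac{\mathfrak{p}-1}{n-\mathfrak{p}} \int_{\mathbb{S}^{n-1}} h_{\bar K}^p \, d\mu \right)^{-1/p} \bar K
\]
is a solution to Problem 5 for $\mu$, together with the convergence $K_{j_{l_k}} \to K'$. Invoking the uniqueness part of Lemma \ref{lemma6.3} forces $K' = K = \mathrm{C}_\mathfrak{p}^p \mu$, so $K_{j_{l_k}} \to K$.

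Since every subsequence of $\{K_j\}_j$ contains a further subsequence converging in the Hausdorff metric to the same limit $K$, the full sequence $K_j$ converges to $K$, which is the claimed continuity. The only potentially delicate point is the non-degeneracy of the limit $\bar K$, but this has already been secured by Lemmas \ref{lemma7.2} and \ref{lemma7.3}; the latter is exactly where the hypothesis $1 < \mathfrak{p} \leq 2$ is essential, via the Hausdorff-dimension bound $\dim \bar K \leq n - \mathfrak{p}$ forcing $\mathrm{C}_\mathfrak{p}(\bar K) = 0$ and contradicting $\mathrm{C}_\mathfrak{p}(\bar K_j) = 1$. Hence the main obstacle in the theorem has been absorbed into the preparatory lemmas, and all that remains here is the routine packaging described above.
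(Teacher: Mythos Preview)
Your proposal is correct and follows essentially the same route as the paper's proof: both use Lemma \ref{lemma7.1} for boundedness, Blaschke selection on the normalized sequence $\{\bar K_j\}$, Lemma \ref{lemma7.4} to identify the subsequential limit as a solution to Problem 5 for $\mu$, and the uniqueness in Lemma \ref{lemma6.3} to pin it down as $K$. The only cosmetic difference is that the paper first passes to a convergent subsequence of $\{K_j\}$ and then extracts a further subsequence of the corresponding $\{\bar K_{j_l}\}$, whereas you apply Blaschke directly to $\{\bar K_j\}$ along an arbitrary subsequence; both packagings yield the same conclusion. One small remark: the convergence $K_{j_{l_k}}\to K'$ that you attribute to Lemma \ref{lemma7.4}(3) is established in the \emph{proof} of that lemma rather than in its statement, so you may want to cite it accordingly.
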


\begin{proof}
For the sake of simplicity, write $K_j$ and $K$ for ${\rm
C}_\mathfrak{p}^p\mu_j$ and ${\rm C}_\mathfrak{p}^p\mu$,
respectively. By Lemma \ref{lemma6.3},  $K_j$ and
$K$ are also the unique solutions to Problem 3 for $\mu_j$ and $\mu$,
respectively. From Lemma \ref{lemma7.1}, it follows that the sequence $\{K_j\}_j$ is
bounded from above. Hence, to prove that $K_j\to K$,  it suffices to
prove each convergent subsequence $\{K_{j_l}\}_l$ of
$\{K_j\}_j$ converges to $K$.

Assume that $\{K_{j_l}\}_l$ is a convergent subsequence of $\{K_j\}_j$.
Let $\bar{K}_j={\rm
C}_\mathfrak{p}(K_j)^{-1/(n-\mathfrak{p})}K_j$,  $j\in\mathbb{N}$. By Lemma
\ref{lem6.1} (1),   $\bar{K_{j_l}}$ is the unique
solution to Problem 4 for $\mu_{j_l}$. From Lemma \ref{lemma7.1}
again, the sequence $\{\bar{K}_{j_l}\}_l$ is bounded from above.
 Thus, by the Blaschke
selection theorem, $\{\bar{K}_{j_l}\}_l$ has a subsequence
$\{\bar{K}_{j_{l_i}}\}_i$ converging to a compact convex set
$\bar{K}_0$. By Lemma \ref{lemma7.4} (1),  $\bar{K}_0$ is a convex
body containing the origin; by Lemma \ref{lemma7.4} (2),
$0<\int_{\mathbb{S}^{n-1}}h_{\bar{K}_0}^pd\mu<\infty$. Thus,
\[K_0=\left(\frac{\mathfrak{p}-1}{n-\mathfrak{p}}\int\limits_{\mathbb{S}^{n-1}}h_{\bar{K}_0}^pd\mu \right)^{\frac{-1}{p}}\bar{K}_0\]
is indeed a convex body. By Lemma \ref{lemma7.4} (3), $K_0$ is the unique solution to
Problem 5 for $\mu$. In light of $K$ is also the unique solution to Problem 5 for $\mu$,  we have
$ K_0=K. $
Therefore, $\lim_{i\to\infty}K_{j_{l_i}}=K$. Since
$\{K_{j_l}\}_l$ is a convergent sequence, it follows that
$\lim_{l\to\infty} K_{j_l}=K$.
\end{proof}

For each $\mu\in\mathcal{M}$, if $n-\mathfrak{p}\neq p$, we can define
\[\bar{\rm C}_\mathfrak{p}^p\mu ={\rm C}_\mathfrak{p}({\rm C}_\mathfrak{p}^p\mu)^{-1/(n-\mathfrak{p}-p)}{\rm C}_\mathfrak{p}^p\mu. \]
Then $\bar{\rm C}_\mathfrak{p}^p\mu$ is the unique convex
body which contains the origin and is such that
\[ h_{\bar{\rm C}_\mathfrak{p}^p\mu}^{p-1}d\mu=d\mu_\mathfrak{p}(\bar{\rm C}_\mathfrak{p}^p\mu,\cdot). \]

\begin{corollary}\label{Corollary9.2}
Suppose that $\mu_j,\mu\in\mathcal{M}$,
$j\in\mathbb{N}$, $1<p<\infty$ and $1<\mathfrak{p}\le 2$,
$n-\mathfrak{p}\neq p$.  If $\mu_j\to \mu$ weakly as $j\to\infty$, then
$\bar{\rm C}_\mathfrak{p}^p\mu_j\to \bar{\rm C}_\mathfrak{p}^p\mu$.
\end{corollary}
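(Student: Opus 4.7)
The plan is to deduce Corollary 9.2 as a short consequence of Theorem 9.1 together with the continuity of $\mathfrak p$-capacity and of scalar multiplication on $\mathcal K^n$. Set $K_j:={\rm C}_\mathfrak{p}^p\mu_j$ and $K:={\rm C}_\mathfrak{p}^p\mu$. By Theorem \ref{Theorem9.1}, the weak convergence $\mu_j\to\mu$ implies that $K_j\to K$ in the Hausdorff metric. Because the convex body $K$ produced by Theorem \ref{generalCapMProblem} has nonempty interior (by the paper's definition of convex body), we have ${\rm C}_\mathfrak{p}(K)>0$, and by continuity of $\mathfrak p$-capacity on $\mathcal K^n$ we conclude that ${\rm C}_\mathfrak{p}(K_j)\to {\rm C}_\mathfrak{p}(K)>0$.

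Next, since $n-\mathfrak{p}\neq p$, the exponent $-1/(n-\mathfrak{p}-p)$ is a well-defined finite real number, and the map $s\mapsto s^{-1/(n-\mathfrak{p}-p)}$ is continuous on $(0,\infty)$. Hence the scalar factors converge,
\[{\rm C}_\mathfrak{p}(K_j)^{-1/(n-\mathfrak{p}-p)}\;\longrightarrow\;{\rm C}_\mathfrak{p}(K)^{-1/(n-\mathfrak{p}-p)}>0.\]
Moreover, scalar multiplication $(s,Q)\mapsto s Q$ is jointly continuous from $(0,\infty)\times\mathcal K^n$ to $\mathcal K^n$ with respect to the Hausdorff metric, as follows at once from $h_{sQ}=s h_Q$ together with the definition of $\delta_H$ in terms of support functions.

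Combining these observations yields
\[\bar{\rm C}_\mathfrak{p}^p\mu_j={\rm C}_\mathfrak{p}(K_j)^{-1/(n-\mathfrak{p}-p)}K_j\;\longrightarrow\;{\rm C}_\mathfrak{p}(K)^{-1/(n-\mathfrak{p}-p)}K=\bar{\rm C}_\mathfrak{p}^p\mu,\]
as desired. There is no real obstacle here; the only point requiring a comment is the strict positivity of ${\rm C}_\mathfrak{p}(K)$, which holds because Theorem \ref{generalCapMProblem} produces a genuine convex body (with nonempty interior), so the prefactor is bounded and bounded away from zero along the sequence, and the argument reduces to the routine joint continuity of dilation.
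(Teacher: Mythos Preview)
Your proof is correct and follows essentially the same approach as the paper: apply Theorem~\ref{Theorem9.1} to get $K_j\to K$, use continuity of $\mathfrak p$-capacity to get convergence of the scaling factors, and conclude via continuity of dilation. Your version is in fact slightly more careful than the paper's in making explicit why ${\rm C}_\mathfrak{p}(K)>0$ and why the dilation step is continuous.
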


\begin{proof}
Since $\mu_j\to \mu$ weakly, we have  $ {\rm
C}_\mathfrak{p}^p\mu_j\to  {\rm C}_\mathfrak{p}^p\mu$ by Theorem
\ref{Theorem9.1}. So, ${\rm C}_\mathfrak{p}({\rm
C}_\mathfrak{p}^p\mu_j)\to {\rm C}_\mathfrak{p}({\rm
C}_\mathfrak{p}^p\mu)$, and therefore ${\rm C}_\mathfrak{p}({\rm
C}_\mathfrak{p}^p\mu_j)^{-1/(n-\mathfrak{p}-p)}\to {\rm
C}_\mathfrak{p}({\rm C}_\mathfrak{p}^p\mu)^{-1/(n-\mathfrak{p}-p)}$,
as $j\to\infty$. Consequently,
\[\lim_{j\to\infty}\bar{\rm C}_\mathfrak{p}^p\mu_j
=\lim_{j\to\infty}{\rm C}_\mathfrak{p}({\rm
C}_\mathfrak{p}^p\mu_j)^{-1/(n-\mathfrak{p}-p)}{\rm
C}_\mathfrak{p}^p\mu_j ={\rm C}_\mathfrak{p}({\rm
C}_\mathfrak{p}^p\mu)^{-1/(n-\mathfrak{p}-p)}{\rm
C}_\mathfrak{p}^p\mu= \bar{\rm C}_\mathfrak{p}^p\mu,\]
as desired.
\end{proof}

\begin{corollary}\label{Corollary9.3}
Suppose that $K_j,K\in\mathcal{K}^n_o$,
$j\in\mathbb{N}$, $1<p<\infty$ and  $1<\mathfrak{p}\le 2$,
 $n-\mathfrak{p}\neq p$. If
$\mu_{p,\mathfrak{p}}(K_j,\cdot)\to\mu_{p,\mathfrak{p}}(K,\cdot)$
weakly as $j\to\infty$,  then $K_j\to K$.
\end{corollary}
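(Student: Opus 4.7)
The plan is to reduce Corollary 9.3 directly to Corollary 9.2 by unfolding the definitions, identifying each $K_j$ (respectively $K$) as the image of the measure $\mu_{p,\mathfrak{p}}(K_j,\cdot)$ (respectively $\mu_{p,\mathfrak{p}}(K,\cdot)$) under the operator $\bar{\rm C}_\mathfrak{p}^p$. Concretely, set $\nu_j=\mu_{p,\mathfrak{p}}(K_j,\cdot)$ and $\nu=\mu_{p,\mathfrak{p}}(K,\cdot)$. From Definition \ref{DefOfLqCapMeasure} we have $d\mu_\mathfrak{p}(K_j,\cdot)=h_{K_j}^{p-1}d\nu_j$ and $d\mu_\mathfrak{p}(K,\cdot)=h_K^{p-1}d\nu$, so by the uniqueness part of Corollary \ref{generalCapMProblem2} we may identify $K_j=\bar{\rm C}_\mathfrak{p}^p\nu_j$ and $K=\bar{\rm C}_\mathfrak{p}^p\nu$.

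The only nontrivial point is then to verify that $\nu_j,\nu\in\mathcal{M}$, i.e.\ that these measures are not concentrated on any closed hemisphere, so that Corollary \ref{Corollary9.2} actually applies. For this, I would use the two facts recorded in Section 2.2: that the $\mathfrak{p}$-capacitary measure $\mu_\mathfrak{p}(K,\cdot)$ has its centroid at the origin and is not concentrated on any great subsphere (the second property was used, e.g., at the end of the proof of Theorem \ref{p-capMinkowskiineqthm}). Together these force $\mu_\mathfrak{p}(K,\cdot)$ not to be concentrated on any closed hemisphere, because a measure with centroid at the origin concentrated on a closed hemisphere must in fact be concentrated on the bounding great subsphere. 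Since $K\in\mathcal{K}^n_o$ has $h_K>0$ on $\mathbb{S}^{n-1}$, the density $h_K^{1-p}$ is strictly positive and continuous, so $\nu=h_K^{1-p}\,d\mu_\mathfrak{p}(K,\cdot)$ inherits this property; the same argument handles each $\nu_j$.

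With $\nu_j,\nu\in\mathcal{M}$ confirmed and the weak convergence $\nu_j\to\nu$ given by hypothesis, Corollary \ref{Corollary9.2} yields
\[
K_j=\bar{\rm C}_\mathfrak{p}^p\nu_j\longrightarrow\bar{\rm C}_\mathfrak{p}^p\nu=K,
\]
as desired. No further argument is needed once the membership in $\mathcal{M}$ is checked; the entire force of the continuity statement is already packaged inside Corollary \ref{Corollary9.2} (and ultimately inside Theorem \ref{Theorem9.1} combined with the continuity of ${\rm C}_\mathfrak{p}$). The main obstacle, if any, is the small bookkeeping to ensure that the hypothesis $n-\mathfrak{p}\neq p$ lets us invoke the normalized operator $\bar{\rm C}_\mathfrak{p}^p$ cleanly, and to confirm that the identification $K_j=\bar{\rm C}_\mathfrak{p}^p\nu_j$ is legitimate via the uniqueness clause of Corollary \ref{generalCapMProblem2}, which requires $K_j\in\mathcal{K}^n_o$ (given in the statement) rather than merely containing the origin.
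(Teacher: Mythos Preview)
Your proposal is correct and follows essentially the same route as the paper: set $\nu_j=\mu_{p,\mathfrak p}(K_j,\cdot)$, $\nu=\mu_{p,\mathfrak p}(K,\cdot)$, use the definition to rewrite $d\mu_\mathfrak{p}(K_j,\cdot)=h_{K_j}^{p-1}d\nu_j$, invoke uniqueness to identify $K_j=\bar{\rm C}_\mathfrak{p}^p\nu_j$, and then apply Corollary~\ref{Corollary9.2}. Your explicit verification that $\nu_j,\nu\in\mathcal M$ is a detail the paper's proof leaves tacit, so your write-up is in fact slightly more complete.
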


\begin{proof}
Let $\mu_j=\mu_{p,\mathfrak{p}}(K_j,\cdot)$ and
$\mu=\mu_{p,\mathfrak p}(K,\cdot)$. Then,
$h_{K_j}^{p-1}d\mu_j=d\mu_\mathfrak{p}(K_j,\cdot)$, and
$h_{K}^{p-1}d\mu_j=d\mu_\mathfrak{p}(K,\cdot)$. From the uniqueness
of $\bar{\rm C}_\mathfrak{p}^p$ it follows that $K_j=\bar{\rm
C}_\mathfrak{p}^p\mu_j$ and $K=\bar{\rm C}_\mathfrak{p}^p\mu$. Since
$\mu_j\to\mu$ weakly, Corollary \ref{Corollary9.2} implies that
$\bar{\rm C}_\mathfrak{p}^p\mu_j\to \bar{\rm C}_\mathfrak{p}^p\mu$,
as $j\to\infty$. That is, $K_j\to K$ as $j\to\infty$.
\end{proof}

\vskip 10pt\noindent \textbf{Remark 9.4.}  After this work, we further study the $L_{p}$ Minkowski problem for $\mathfrak{p}$-capacity
when the given measure is even, it will be dealt with in a separate paper as a sequel.

\vskip 25pt
\section{\bf  Open problem}
\vskip 10pt
Since  the logarithmic Minkowski problem  is the most important case, we pose the following

\vskip 10pt \noindent \textbf{Logarithmic Minkowski problem for capacity.} \emph{Suppose that $\mu$ is a finite Borel measure 
on $\mathbb{S}^{n-1}$ and $1<\mathfrak{p}<n$. What are the 
necessary and sufficient conditions on $\mu$ so that  $\mu$ is the $L_{0}$ $\mathfrak{p}$-capacitary measure  $\mu_{0,\mathfrak{p}}(K,\cdot)$
of a convex body $K$ in $\mathbb{R}^n$?}

\vskip 20pt
\bibliographystyle{amsplain}

\end{document}